\definecolor{note_fontcolor}{rgb}{0.800781, 0.800781, 0.800781}
\newtheorem{lemma}{Lemma}
\newtheorem{theorem}{Theorem}
\newtheorem{definition}{Definition}
\definecolor{dodgerblue}{RGB}{30, 144, 255}
\definecolor{dodgerblue_comp}{RGB}{255, 143, 31}
\definecolor{midnightblue}{RGB}{25, 25, 112}
\definecolor{midnightblue_comp}{RGB}{112, 112, 25}
\definecolor{cobaltblue}{RGB}{0, 32, 194}
\definecolor{cobaltblue_comp}{RGB}{194, 162, 0}
\definecolor{hermeshavane}{RGB}{28, 31, 42}
\definecolor{hermesbleunuit}{RGB}{25, 38, 59}
\definecolor{hermesbleunuitmed}{RGB}{25, 38, 59}
\definecolor{hermesbleunuitlight}{RGB}{25, 38, 59}
\definecolor{hermeshavanebrown}{RGB}{91, 58, 41}
\tikzstyle{startstop} = [rectangle, rounded corners, minimum width=3cm, minimum height=1cm,text centered, draw=black, fill=red!39]
\tikzstyle{io} = [trapezium,rectangle, rounded corners, minimum width=3cm, minimum height=1cm,text centered, text width =3cm, draw=white, fill=hermesbleunuit, text = white]
\tikzstyle{arrow} = [thick,->,>=stealth]
\tikzstyle{process} = [rectangle, rounded corners, minimum width=1cm, minimum height=1.5cm, text centered, text width =5.2cm, draw=white, fill=hermesbleunuit, text = white, fill opacity = 0.92]
\tikzstyle{process2} = [rectangle, rounded corners, minimum width=3cm, minimum height=1.5cm, text centered, text width =5.2cm, draw=white, fill=hermesbleunuit, text = white, fill opacity = 0.85]
\tikzstyle{io2} = [trapezium,rectangle, rounded corners, minimum width=3cm, minimum height=1cm,text centered, text width =3cm, draw=white, fill=hermeshavanebrown, text = white]
\tikzstyle{processalt} = [rectangle, rounded corners, minimum width=1cm, minimum height=1.5cm, text centered, text width =5.2cm, draw=white, fill=hermeshavanebrown, text = white, fill opacity = 0.92]
\tikzstyle{processalt2} = [rectangle, rounded corners, minimum width=6cm, minimum height=1.5cm, text centered, text width =5.2cm, draw=white, fill=hermeshavanebrown, text = white, fill opacity = 0.85]
\title{Pseudo-Poisson Distributions with Nonlinear Conditional Rates%
\thanks{For full functionality of the animated figures presented in this document, please view this PDF in \textbf{Adobe Acrobat Reader}. Other PDF viewers (including browser-based viewers, 
Preview) may render the figures statically.}}
\author{
  Jared N. Lakhani\\
  \textit{Department of Statistical Sciences, University of Cape Town}\\
  \texttt{lkhjar001@myuct.ac.za}
}
\date{} 
\begin{document}
\maketitle

\begin{abstract}
\citet{arnold2021statistical} claim that the bivariate pseudo-Poisson distribution is well suited to bivariate count data with one equidispersed and one overdispersed marginal, owing to its parsimonious structure and straightforward parameter estimation. In the formulation of \citet{leiter1973some}, the conditional mean of \(X_2\) was specified as a function of \(X_1\); \citet{arnold2021statistical} subsequently augmented this specification by adding an intercept, yielding a linear conditional rate. A direct implication of this construction is that the bivariate pseudo-Poisson distribution can represent only positive correlation between the two variables. This study generalizes the conditional rate to accommodate negatively correlated datasets by introducing curvature. This augmentation provides the additional benefit of allowing the model to behave approximately linear when appropriate, while adequately handling the boundary case \((x_1,x_2)=(0,0)\). According to the Akaike Information Criterion (AIC), the models proposed in this study outperform \citet{arnold2021statistical}'s linear models.
\end{abstract}

\section{Introduction}
We review the concept of the bivariate pseudo-Poisson distribution as discussed in \citet{arnold2021statistical}. 
\begin{definition}
    A pair of variables ($X_1, X_2$) is said to have a bivariate pseudo-Poisson distribution if there exists a positive constant $\lambda_1$ such that:
    \begin{align}
        X_1 \sim \text{Poisson}(\lambda_1),\nonumber
    \end{align}
    and a function $\lambda_2:\{0,1, 2\ldots \} \rightarrow(0, \infty)$ such that, for every non-negative integer $x_1$:
    \begin{align}
        X_2\mid X_1 \sim \text{Poisson}(\lambda_2(x_1)).\nonumber
    \end{align}
\end{definition}
Motivated by Example~1 in \citet{arnold2023pseudo}, we specify the conditional rate as $\lambda_2(x_1)=\delta+\beta\,F(x_1;\boldsymbol{\theta})$, where \(\delta\ge0\) is an intercept, $\beta\in \mathbb{R}$ is an amplitude parameter, and \(F(x_1;\boldsymbol{\theta})\) is a parametric distribution function on \([0,\infty)\) (with \(\lambda_2(x_1)\ge0\) for all \(x_1\)). This parameterization permits negative correlation between \(X_1\) and \(X_2\) when \(\beta<0\) and \(\delta\ge \max\{-\beta, 0\}\) to ensure \(\lambda_2(x_1)\ge0\). By Theorem~\ref{thm:rho_postive}, the sign of \(\operatorname{Corr}(X_1,X_2)\) is determined by the monotonicity of \(\lambda_2(x_1)\). Since \(F(x_1;\boldsymbol{\theta})\) is always non-decreasing, the monotonicity of \(\lambda_2(x_1)\) is dependent on the sign of $\beta$. Hence, if \(\lambda_2\) is strictly increasing in \(x_1\) ($\beta> 0$), the correlation is positive; if \(\lambda_2\) is strictly decreasing ($\beta< 0$), the correlation is negative.

\begin{theorem} \label{thm:rho_postive}
Given a pseudo-Poisson form $X_1 \sim \text{Poisson}(\lambda_1)$, $X_2 \mid X_1 \sim \text{Poisson}\left(\lambda_2(x1) \right)$ with $\lambda_1, \lambda_2(x_1) > 0$ for $x_1, x_2 \in \{0, 1, 2\ldots \}$: if  $\lambda_2(x_1)$ is strictly increasing with respect to $x_1$, then $\rho(X_1, X_2) > 0$. 

\begin{proof}
    We note $E(X_1) = \alpha$, $E(X_2) = E_{X_1}\{E_{X_2\mid X_1}(X_2 \mid X_1 = x_1) \} = E_{X_1}\{\lambda_2(X_1) \}$. Furthermore, $E(X_1X_2) = E_{X_1}\{ X_1 E_{X_2\mid X_1}\{X_2 \mid X_1 = x_1 \} \} = E_{X_1}\{X_1\lambda_2(X_1)  \}$. Now we have:
    \begin{align}
        Cov(X_1, X_2) &= E(X_1X_2) - E(X_1)E(X_2) \nonumber \\
        & = E_{X_1}\{X_1\lambda_2(X_1)  \} - \lambda_1 E_{X_1}\{\lambda_2(X_1) \} \nonumber\\
        & = \sum_{i = 0}^\infty i \lambda_2(i)\frac{e^{-\lambda_1}\lambda_1^i}{i!} - \sum_{i = 0}^\infty \lambda_1 \lambda_2(i)\frac{e^{-\lambda_1}\lambda_1^i}{i!} \nonumber\\
        & = \sum_{i = 1}^\infty i \lambda_2(i)\frac{e^{-\lambda_1}\lambda_1^i}{i!} - \sum_{i = 0}^\infty \lambda_1 \lambda_2(i)\frac{e^{-\lambda_1}\lambda_1^i}{i!} \nonumber \\
         & = \sum_{i = 0}^\infty (i+1) \lambda_2(i+1)\frac{e^{-\lambda_1}\lambda_1^{i+1}}{(i+1)!} - \sum_{i = 0}^\infty \lambda_1 \lambda_2(i)\frac{e^{-\lambda_1}\lambda_1^i}{i!} \nonumber \\
       & = \sum_{i = 0}^\infty \lambda_1 \lambda_2(i+1)\frac{e^{-\lambda_1}\lambda_1^{i}}{i!} - \sum_{i = 0}^\infty \lambda_1 \lambda_2(i)\frac{e^{-\lambda_1}\lambda_1^i}{i!} \nonumber \\
       & = \sum_{i = 0}^\infty (i+1) \frac{e^{-\lambda_1}\lambda_1^{i}}{(i+1)!} \left(\lambda_2(i+1) - \lambda_2(i) \right) \nonumber 
    \end{align}
Since $\lambda_2(i+1) > \lambda_2(i)$, it follows that ${Cov}(X_1, X_2) > 0$, and consequently $\rho(X_1, X_2) > 0$.
Conversely, if $\lambda_2(x_1)$ is strictly decreasing such that $\lambda_2(i+1) < \lambda_2(i)$, then ${Cov}(X_1, X_2) < 0$, implying $\rho(X_1, X_2) < 0$.
When $\lambda_2(x_1)$ is constant, $\rho(X_1, X_2) = 0$.
\end{proof}
\end{theorem}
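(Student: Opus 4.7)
The plan is to work with the covariance, since $\rho(X_1,X_2)$ has the same sign as $\operatorname{Cov}(X_1,X_2)$ whenever both variances are finite and positive (which they are, as both marginals are Poisson-type with strictly positive rates). So the task reduces to showing $\operatorname{Cov}(X_1,X_2)>0$ when $\lambda_2$ is strictly increasing.

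First I would compute $E(X_2)$ and $E(X_1 X_2)$ by the tower property, conditioning on $X_1$. Because $X_2\mid X_1 \sim \text{Poisson}(\lambda_2(X_1))$, we have $E(X_2\mid X_1)=\lambda_2(X_1)$, which immediately gives $E(X_2)=E[\lambda_2(X_1)]$ and $E(X_1X_2)=E[X_1\lambda_2(X_1)]$, where the outer expectation is over $X_1\sim\text{Poisson}(\lambda_1)$. Hence
\begin{equation*}
\operatorname{Cov}(X_1,X_2)=E[X_1\lambda_2(X_1)]-\lambda_1\,E[\lambda_2(X_1)].
\end{equation*}

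The key step is the Poisson shift identity $E[X_1 g(X_1)]=\lambda_1\,E[g(X_1+1)]$ for any function $g$ for which the expectations exist. I would derive this directly by writing the sum with the Poisson pmf, cancelling the leading $i=0$ term, and reindexing $i\mapsto i+1$, which absorbs the factor $i$ into the factorial and produces one extra power of $\lambda_1$. This is essentially the reindexing already performed in the displayed chain of equalities, and it is the only nontrivial manipulation.

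Applying this identity with $g=\lambda_2$ yields
\begin{equation*}
\operatorname{Cov}(X_1,X_2)=\lambda_1\,E\!\left[\lambda_2(X_1+1)-\lambda_2(X_1)\right].
\end{equation*}
If $\lambda_2$ is strictly increasing, the integrand is strictly positive for every realization of $X_1$, and since $\lambda_1>0$ and the Poisson distribution assigns positive mass to every nonnegative integer, the expectation is strictly positive, hence $\operatorname{Cov}(X_1,X_2)>0$ and therefore $\rho(X_1,X_2)>0$. The strictly decreasing and constant cases follow by the same formula, giving negative and zero covariance respectively, which covers the converse statements at the end of the theorem. I do not anticipate a serious obstacle; the only subtlety is being careful that the shift identity needs the $i=0$ term to vanish (which it does because of the factor $i$) before reindexing.
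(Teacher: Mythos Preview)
Your proposal is correct and follows essentially the same route as the paper: both reduce to $\operatorname{Cov}(X_1,X_2)=\lambda_1\,E[\lambda_2(X_1+1)-\lambda_2(X_1)]$ via the Poisson shift (Stein--Chen) identity, which the paper carries out as an explicit reindexing of the Poisson sum. Your presentation is slightly cleaner in that you name the identity rather than unrolling it, but the argument is identical in substance.
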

Additionally, we offer Theorem \ref{thm:poisson_var}, merely to show an alternative way of proving Theorem $6.3$ in \citet{arnold2021statistical}, claiming the bivariate pseudo-Poisson distribution is always over-dispersed.

\begin{theorem} \label{thm:poisson_var}  Given a pseudo-Poisson form $X_1 \sim \text{Poisson}(\lambda_1)$, $X_2 \mid X_1 \sim \text{Poisson}\left(\lambda_2(x_1) \right)$, $Var(X_1) = E(X_1)$ and $Var(X_2) \geq E(X_2)$ given $\lambda_1, \lambda_2(x_1) > 0$ for $x_1, x_2 \in \{0, 1, 2\ldots \}$. 
\end{theorem}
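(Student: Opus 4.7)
The first claim, $\text{Var}(X_1) = E(X_1)$, is immediate from the marginal specification $X_1 \sim \text{Poisson}(\lambda_1)$, since a Poisson random variable is equidispersed; I would state this in a single line and move on to the substantive part.

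For the inequality $\text{Var}(X_2) \geq E(X_2)$, my plan is to apply the law of total variance. The natural decomposition is
\begin{equation}
\text{Var}(X_2) = E\!\left[\text{Var}(X_2 \mid X_1)\right] + \text{Var}\!\left[E(X_2 \mid X_1)\right]. \nonumber
\end{equation}
Since the conditional distribution $X_2 \mid X_1$ is Poisson with rate $\lambda_2(X_1)$, both the conditional mean and conditional variance equal $\lambda_2(X_1)$. Substituting gives
\begin{equation}
\text{Var}(X_2) = E[\lambda_2(X_1)] + \text{Var}[\lambda_2(X_1)]. \nonumber
\end{equation}
In parallel, the tower property yields $E(X_2) = E\!\left[E(X_2\mid X_1)\right] = E[\lambda_2(X_1)]$.

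Subtracting then gives $\text{Var}(X_2) - E(X_2) = \text{Var}[\lambda_2(X_1)] \geq 0$, with equality iff $\lambda_2(X_1)$ is almost surely constant in $X_1$ (i.e.\ the degenerate case of independence, in which $X_2$ is itself marginally Poisson and hence equidispersed). This establishes the required inequality and also pinpoints exactly when strict over-dispersion occurs, which is a slightly sharper statement than what the theorem asks for.

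There is no serious obstacle here: once the law of total variance is invoked, the argument reduces to two one-line substitutions using the mean-equals-variance property of the conditional Poisson. The only subtlety worth a brief remark is that the expectations $E[\lambda_2(X_1)]$ and $\text{Var}[\lambda_2(X_1)]$ must be finite for the decomposition to be meaningful; under the parametric families considered in the paper (where $\lambda_2(x_1) = \delta + \beta F(x_1;\boldsymbol{\theta})$ is bounded in $x_1$) this holds trivially, so I would note this in a single sentence rather than imposing explicit moment hypotheses in the statement.
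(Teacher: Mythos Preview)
Your proposal is correct and mirrors the paper's proof essentially line for line: both invoke the law of total variance, substitute $\text{Var}(X_2\mid X_1)=E(X_2\mid X_1)=\lambda_2(X_1)$ from the conditional Poisson assumption, and conclude via non-negativity of $\text{Var}[\lambda_2(X_1)]$. Your additional remarks on the equality case and on finiteness of moments are small refinements beyond what the paper records, but the argument itself is the same.
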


\begin{proof}
The first identity follows directly from the properties of the Poisson distribution. Since $X_1 \sim \text{Poisson}(\lambda_1)$, we have $Var(X_1) = E(X_1) = \lambda_1$. Using $Var_{X_2|X_1}(X_2|X_1) = E_{X_2|X_1}(X_2|X_1)$ from the properties of a Poisson distribution since $X_2\mid X_1$ is Poisson distributed, we have:
\begin{align}
    Var(X_2) &=E_{X_1} \{Var_{X_2|X_1}(X_2|X_1) \} + Var_{X_1}\{E_{X_2|X_1}(X_2|X_1)    \}
    \nonumber\\
    & = E_{X_1} \{E_{X_2|X_1}(X_2|X_1) \} + Var_{X_1}\{E_{X_2|X_1}(X_2|X_1)    \} \nonumber \\
     & = E(X_2) + Var_{X_1}\{\lambda_2(X_1) \} \nonumber \\
     & \geq E(X_2) \nonumber
\end{align}
where the inequality follows since variance is always non-negative.
\end{proof}
In this study, the distribution function \(F(x_1;\boldsymbol{\theta})\) is specified to belong to either the exponential or Lomax family. Importantly, these two were chosen because closed-form expressions for their moments can be obtained, with infinite sums simplified either via the exponential Maclaurin series or by using hypergeometric functions. The study further segregates these full models - models with full parameterization - into sub-models in which certain parameters are nullified (by fixing them at $0$ or $1$), inspired by \citet{arnold2021statistical}. The study specifies method-of-moments estimators (m.m.e.'s) either through implicit relations or explicitly stated equations, examines the correlations each model can accommodate, and includes likelihood-ratio tests to compare the sub-models with their respective full models. Additionally, simulated data are provided for enrichment, after which two datasets (also used in \citet{arnold2021statistical}) are analysed as applications of the models. Furthermore, in terms of model fit according to the Akaike Information Criterion (AIC), this study's models outperform \citet{arnold2021statistical}'s linear models. We attribute this to the curvature of the conditional rate \(\lambda_2(x_1)\), where this extends to their added ability to model $(x_1, x_2) = (0, 0)$ observations adequately. This study's models also allow for negatively correlated bivariate datasets - although no such data is analysed here.

\section{Exponential}
We restrict $F(x_1; \boldsymbol{\theta})$, with $\boldsymbol{\theta} = \gamma$, to be the exponential distribution function. Thus, we have:
\begin{eqnarray}
	X_{1} &\sim & \text{Poisson}(\alpha)  \label{eq: exp Function 1} \\ 
	X_{2} \mid X_{1} &\sim & \text{Poisson}\left(\delta + \beta \left(1 - e^{- \gamma x_1}\right)\right),  \label{eq: exp Function 2}
\end{eqnarray}
where the rate parameter $\gamma > 0$ governs the rate of change (gradient) of the regression function. Later, we re-parameterize via  $e^{-\gamma} = \nu$. The parameters satisfy $\alpha, \gamma > 0$. To ensure that $\lambda_2(x_1) > 0$ for all $x_1 > 0$, it is necessary that $\beta \in \mathbb{R} \setminus \{0\}$ with $\delta \geq \max\{-\beta, 0\}$ - which corresponds to $\delta \geq -\beta$ in the case of negative correlation $(\beta < 0)$ and $\delta \geq 0$ when the correlation is positive $(\beta > 0)$. We ensure $\beta \neq 0$ here so to ensure an independence model is not created - that is, where $X_2$ is no longer conditioned on $X_1$.\\

We also examine various sub-models that arise from the exponential model by constraining specific parameters. These include removing the amplitude control - thereby fixing positive and negative correlation by setting $\beta = 1$ and $\beta = -1$ respectively, fixing the rate parameter to $\gamma = 1$, or jointly constraining both. In addition, we consider a no-intercept model by setting $\delta = 0$, which by construction precludes the possibility of negative correlation. We further note, that for the no-intercept model, Case III: $\delta = 0$, when $x_{1}=0$, the conditional distribution of $x_{2}$ degenerates at zero, implying that $x_{2}=0$ with probability one. Consequently, any empirical observation with $x_{1}=0$ and $x_{2}>0$ is assigned zero probability under the model, which yields a likelihood of zero and indicates model misspecification for such observations. Figure \ref{fig:exp flow} provides a schematic illustration of the resulting sub-models.

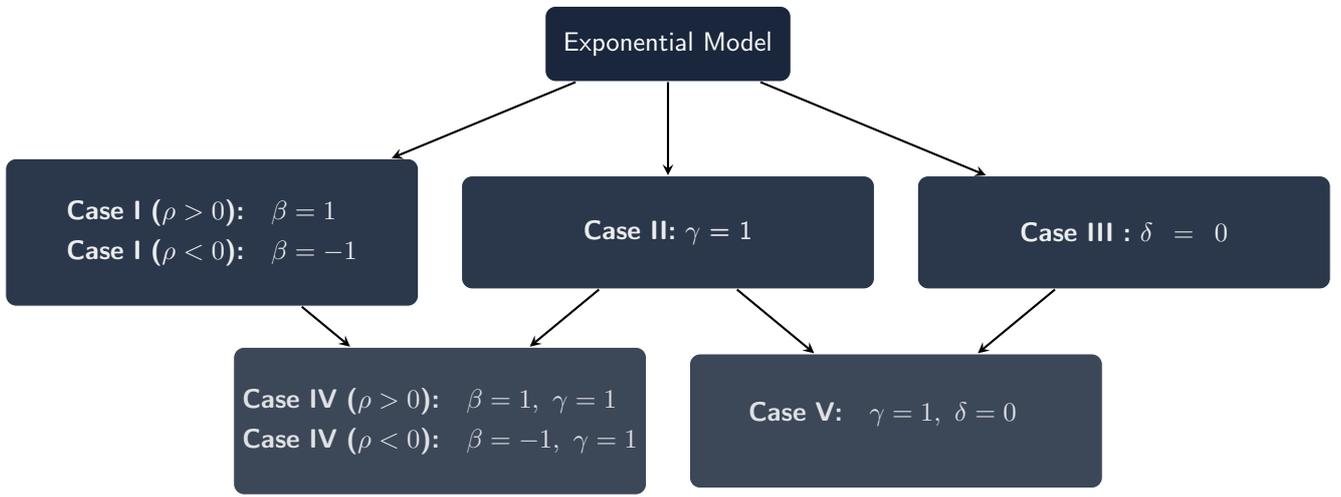
\begin{figure}[H] \centering
	\begin{tikzpicture}[node distance=2cm]
		\node (exp) [io] {Exponential Model};
		\node (expcase1) [process, below of = exp,  xshift = -6cm, yshift = -0.5cm]{\[
        \begin{aligned}
		&\textbf{Case I ($\rho > 0$):} \quad \beta = 1 \\ 
        &\textbf{Case I ($\rho < 0$):} \quad \beta = -1 
        \end{aligned}
        \]}
        ;
		\node (expcase2) [process, below of = exp, yshift = -0.5cm]{\textbf{Case II:} $\gamma$ = 1\\ };
	\node (expcase3) [process2, below of = expcase1, xshift = 3cm, yshift = -0.5cm] {\[
  \begin{aligned}
    &\textbf{Case IV ($\rho > 0$):} \quad \beta = 1,\ \gamma = 1 \\
    &\textbf{Case IV ($\rho < 0$):} \quad \beta = -1,\ \gamma = 1
  \end{aligned}\]}
     ;
\node (expcase4) [process, below of = exp, xshift = 6cm, yshift = -0.5cm]{\textbf{Case III :} $\delta = 0$};
\node (expcase5) [process2, below of = expcase4, xshift = -3cm, yshift = -0.5cm] {
\[
  \begin{aligned}
    &\textbf{Case V:} \quad \gamma = 1,\ \delta = 0
  \end{aligned} \] }
     ;
		\draw [arrow] (exp) -- (expcase1);
		\draw [arrow] (exp) -- (expcase2);
		\draw [arrow] (expcase1) -- (expcase3);
		\draw [arrow] (expcase2) -- (expcase3);
		\draw [arrow] (exp) -- (expcase4);
        \draw [arrow] (expcase4) -- (expcase5);
		\draw [arrow] (expcase2) -- (expcase5);
	\end{tikzpicture}
	\caption{Flow diagram of exponential sub-models.}
    \label{fig:exp flow}
\end{figure}

\subsection{Probability mass functions}
Following from the definition of a Poisson distribution:
\begin{eqnarray}
	P(X_{1}) &=& \frac{e^{-\alpha}\alpha^{x_{1}}}{x_{1}!}, \nonumber\\
	P(X_{2}|X_{1}) &=& \frac{e^{-\delta - \beta (1 -e^{- \gamma x_1})} \{\delta + \beta (1 -e^{- \gamma x_1})\}^{x_2}       } {x_{2}!} , \nonumber \\    
	P(X_{1}, X_{2}) &=& P(X_{1})P(X_{2}|X_{1}) \nonumber\\  
	&=& \frac{e^{-\alpha}\alpha^{x_{1}}}{x_{1}!} \frac{e^{-\delta - \beta (1 -e^{- \gamma x_1})} \{\delta + \beta (1 -e^{- \gamma x_1})\}^{x_2}       } {x_{2}!},\label{eq:P(X1, X2) exp}
\end{eqnarray}
where $x_{1}$, $x_{2}$ $\in$ $\{0, 1, 2...\}$, $\alpha, \gamma > 0$ and $\beta \in \mathbb{R}\setminus \{0\}$ with $\delta \geq \max(-\beta, 0)$.
\subsection{Moments} \label{sec: moments exp}
After substituting $e^{\alpha} = \mu$ and $e^{-\gamma} = \nu$, we note:
\begin{eqnarray}
	E(X_1) &=& \alpha \label{eq: E(X1) exp}, \\
	E(X_2)= E_{X_{1}}\{E(X_2|X_1 = x_1)\}  &=& \delta +  \beta - \beta E_{X_1}(\nu^{X_1}) \nonumber \\
	 &= & \delta + \beta ( 1 - \mu^{\nu - 1} ) ,\label{eq: E(X2) exp} 
\end{eqnarray}
Since
\begin{eqnarray}
	E_{X_1}(\nu^{X_1}) &=& \sum_{i = 0}^{\infty}	\nu^i \ \frac{\mu^{-1} {\alpha} ^i} {i!} \nonumber \\
	&=& \mu^{-1} \sum_{i = 0}^{\infty} \frac{ (\alpha\nu)^i } {i!}    \nonumber \\
	&=&	\mu^{-1} e^{\alpha\nu}     \nonumber \\
	&=& \mu ^ {\nu - 1}   \label{eq: E(e) exp}
\end{eqnarray}
We note, since $0<\nu<1$ $\implies$ $\frac{1}{\mu} < \mu^{\nu-1} < 1$ confirming $E(X_2) > 0$. Now
\begin{eqnarray}
	Var(X_1)&=& \alpha, \label{eq: Var(X1) exp} \\
	Var_{X_2|X_1}(X_2|X_1) &=& \delta+ \beta(1 - \nu^{X_1}) \nonumber \\
	&=& E_{X_2|X_1}(X_2|X_1) ,\label{eq: Var(X2|X_1) exp} 
\end{eqnarray}
Hence, from Equations \ref{eq: E(X2) exp}, \ref{eq: E(e) exp}, \ref{eq: Var(X2|X_1) exp}:
\begin{eqnarray}
	Var(X_2) &=& E_{X_1} \{Var_{X_2|X_1}(X_2|X_1) \} + Var_{X_1}\{E_{X_2|X_1}(X_2|X_1)    \} \nonumber \\
	&=& E(X_2) + {\beta}^{2} Var_{X_1}(\nu^{X_1}) \nonumber \\
	&=& E(X_2) + {\beta}^{2} \{ E_{X_{1}}(\nu^{2X_1}) - \{E_{X_1}(\nu^{X_1}) \}^2    \}  \nonumber \\
	&=&  \beta(1 -\mu^{\nu - 1}) + \delta +  \beta^2 (\mu^{\nu^2 - 1} -\mu^{2\nu - 2}).\label{eq: Var(X2)}
\end{eqnarray}
Note $0<\nu<1 \implies \mu^{\nu^2 - 1} > \mu^{2\nu-2} \implies Var(X_2) > E(X_2)$, which is a stricter version of Theorem \ref{thm:poisson_var}. Furthermore from Equation \ref{eq: E(X1) exp}:
\begin{eqnarray}
	E(X_1 X_2) = E_{X_1} \{ E(X_1 X_2 | X_1 )  \}  &=& E_{X_1} \{ X_1 E( X_2 | X_1 )  \} \nonumber \\
	&=& E_{X_1}\{ X_1\left(\delta +  \beta (1 - e^{-\gamma X_1})  \right)\} \nonumber \\
	&=& \alpha (\delta+ \beta ) - \beta E_{X_1} ( X_1 \nu^{X_1})  \nonumber \\
	&=& \alpha\left( \delta+\beta(1 - \nu\mu^{\nu - 1}) \right)\label{eq: E(X1X2) exp} , 
\end{eqnarray}
since
\begin{eqnarray}
	E_{X_1} ( X_1 \nu^{X_1} ) &=&  \sum_{i = 0}^{\infty} i \nu^i \  \frac{ \mu^{-1} {\alpha}^i} {i!} \nonumber \\
	&=& \mu^{-1} \sum_{i = 1}^{\infty} \alpha\nu \frac{ (\alpha\nu)^{(i-1)}}  {(i-1)!} \nonumber \\
	&=& \mu^{-1} \alpha \nu \sum_{n = 0}^{\infty}
	\frac{ \left(\alpha \nu\right)^{n}}  {n!} \nonumber \\
	&=& \mu^{-1} \alpha \nu e^{\alpha \nu } \nonumber \\
	&=& \alpha\nu\mu^{\nu  - 1}. \nonumber
\end{eqnarray}
Now from Equations \ref{eq: E(X1) exp}, \ref{eq: E(X2) exp}, \ref{eq: E(X1X2) exp}, the covariance between $X_1$ and $X_2$ is:
\begin{eqnarray}
	Cov(X_1, X_2) &=& E(X_1 X_2) - E(X_1)E(X_2) \nonumber \\
	&=& \alpha\beta(1 -\nu)\mu^{\nu -1 } \label{eq: Cov(X1X2) exp}.
\end{eqnarray}
We note that since $0<\nu<1 \implies \alpha (1-\nu)\mu^{\nu - 1} > 0$. Consequently, the sign of the covariance - and thus the direction of the correlation between $X_1$ and $X_2$ - is determined solely by the sign of $\beta$, as previously noted. Now the associated correlation using Equations \ref{eq: Var(X1) exp}, \ref{eq: Var(X2)} and \ref{eq: Cov(X1X2) exp}:
\begin{eqnarray}
	\rho(\alpha, \beta, \gamma, \delta) &=& \frac {Cov(X_1 X_2)}{\sqrt{Var(X_1) Var(X_2)}} \nonumber \\
	&=& \frac{\sqrt{\alpha}\beta(1 -\nu)\mu^{\nu - 1} }{\sqrt{\beta^2(\mu^{\nu^2 - 1} - \mu^{2\nu -2}) + \beta(1 - \mu^{\nu - 1}) + \delta }         }. \label{eq: rho exp} 
\end{eqnarray}
\subsubsection{Method of moments estimation}
Suppose data in the form of $\textbf{X}^{(1)}, \textbf{X}^{(2)}, ..., \textbf{X}^{(n)}$ (where for each $i = 1, \ldots, n$, $\textbf{X}^{(i)} = (X_{1i}, X_{2i})^{T} )$ are obtained, which are independent and identically distributed according to Equations \ref{eq: exp Function 1} - \ref{eq: exp Function 2}. If $M_{1}, M_2 > 0$, consistent asymptotically normal method of moment estimates (m.m.e's) are acquired.\\
Define:
\begin{eqnarray}
	M_{1} &=& \frac{1}{n}\sum_{i = 1}^{n} x_{1i}, \nonumber\\
	M_{2} &=& \frac{1}{n}\sum_{i = 1}^{n} x_{2i},\nonumber\\
	S_{12}&=& \frac{1}{n}\sum_{i = 1}^{n}(x_{1i} - M_{1} ) ( x_{2i} - M_{2} ), \nonumber \\
	S_{22} &=&  \frac{1}{n}\sum_{i = 1}^{n}(x_{2i} - M_{2} )^2. \nonumber
\end{eqnarray}	
Now equating Equation \ref{eq: E(X1) exp} to its sample mean:
\begin{eqnarray}
	\tilde{\alpha} &=& M_{1},\label{eq: mme alpha exp} \\
	e^{\tilde{\alpha}} &=& e^{M_{1}}  = \tilde{\mu}, \nonumber
\end{eqnarray}	
where Equation \ref{eq: mme alpha exp} is the applicable m.m.e for all models. Consider the remaining moments, $\tilde{\beta}$, $\tilde{\gamma}$ and $\tilde{\delta}$ for each model separately: \\
\paragraph{Full exponential model:}
Taking the quotient of Equation \ref{eq: Cov(X1X2) exp} squared and the difference between Equation \ref{eq: Var(X2)} and Equation \ref{eq: E(X2) exp}, and equating the result to the solution obtained from identical operations on the associated sample moments, an implicit equation w.r.t ${\tilde{\nu}}$ (hence ${\tilde{\gamma}}$) is obtained, to be solved numerically:
\begin{eqnarray}
	\frac{\tilde{\alpha}^2(\tilde{\nu} -1)^2}{\tilde{\mu}^{(\tilde{\nu} - 1)^2} -1 } = \frac{S_{12}^2}{S_2- M_2}, \label{eq: nu exp}
\end{eqnarray}
and to ensure the expression remains real-valued we obtain the condition:
\begin{eqnarray}
    S_2 &\neq & M_2,\nonumber
\end{eqnarray}
although we already elucidated that $Var(X_2) > E(X_2)$ for $\beta \neq 0$. Since $\forall x_{1}, x_{2} \in \{0, 1, 2...\}$,  $\tilde{\gamma} > 0 \implies  0<\tilde{\nu}<1$. Hence, since Equation \ref{eq: nu exp} is strictly increasing for $0<\tilde{\nu}<1$ from Theorem \ref{thm: f(v) strict inc}  (see Appendix \ref{app: thms} for proof), we may use $\lim_{\tilde{\nu} \to 0^+}$ and $\lim_{\tilde{\nu} \to 1^-}$ to obtain the lower and upper bounds, respectfully, of the left side of Equation \ref{eq: nu exp}, hence $\frac{S_{12}^2}{S_2-M_2}$. Therefore:
\begin{eqnarray}
	\frac{M_1^2}{e^{M_1} - 1} &<& \frac{{S_{12}}^2}{{S_2 - M_2}}   <  M_1  \label{eq: full bounds 2}.
\end{eqnarray}
Figure \ref{fig: nu mme full exp} illustrates $\tilde{\nu}$ as a function of $\tilde{\alpha} =M_1$ and $\frac{S_{12}^2}{S_2 - M_2}$. Importantly, due to the bounds imposed by $M_1$ on $\frac{S_{12}^2}{S_2 - M_2}$ in Equation \ref{eq: full bounds 2}, certain combinations of $M_1$, $S_{2}$, $M_{2}$ and $S_{12}$ values may lead to the non-existence of $\tilde{\gamma}$ (hence $\tilde{\nu}$). This behavior is also evident from the patterns observed in Figure \ref{fig: nu mme full exp}. 
\begin{figure}[H]
    \centering
    \includegraphics[width=0.5\linewidth]{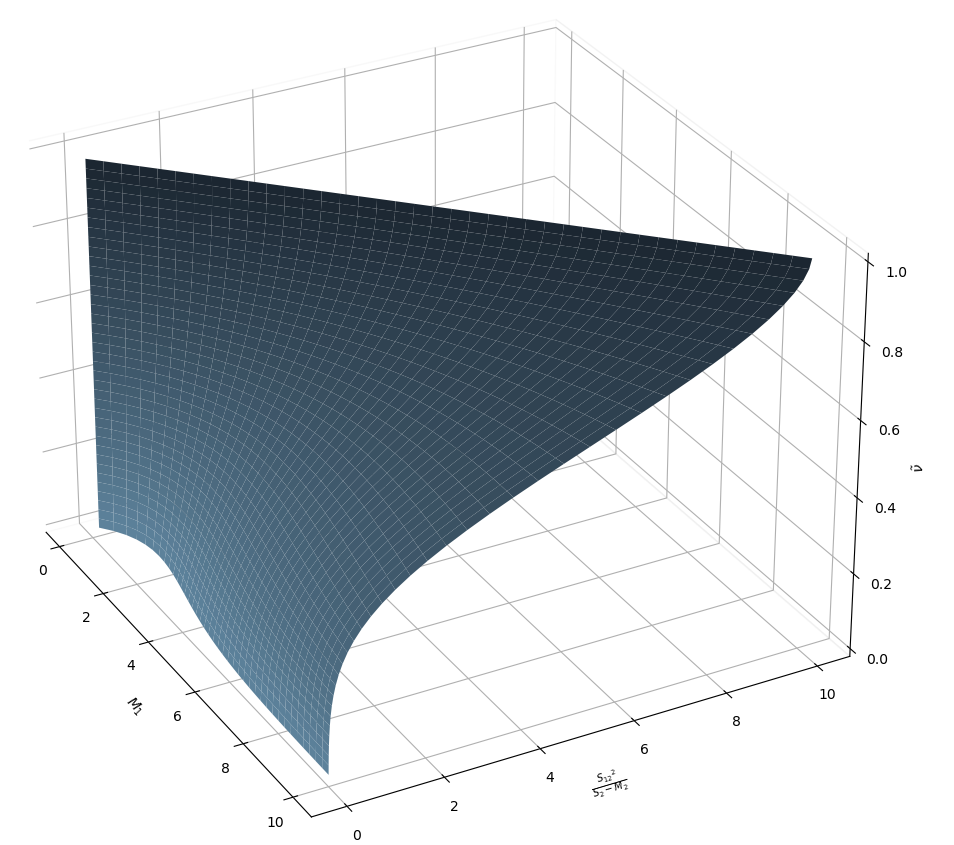}
    \caption{$\tilde{\nu}$ vs $M_1$ and $\frac{S_{12}^2}{S_2 - M_2}$ as per Equation \ref{eq: nu exp}.}
    \label{fig: nu mme full exp}
\end{figure}

We then solve for $\tilde{\beta}$ by equating Equation \ref{eq: Cov(X1X2) exp} to the sample covariance to obtain:
\begin{equation}
     \tilde{\beta} = \frac{S_{12}}{\tilde{\alpha}(1 - \tilde{\nu})\tilde{\mu}^{\tilde{\nu}-1}} .\label{eq: mme beta exp}
\end{equation}
Finally, solve for $\tilde{\delta}$ by merely equating Equation \ref{eq: E(X2) exp} to the sample mean:
\begin{eqnarray}
	\tilde{\delta} &=& M_2 - \tilde{\beta}(1 - \tilde{\mu}^{\tilde{\nu} - 1}).
	\label{eq: mme delta exp}
\end{eqnarray}
We note, that the m.m.e's of Case 3: $\delta = 0$  may be derived analogously to the approach previously outlined, with the exception of excluding the estimator for $\delta$. Additionally, we observe that Equation~\ref{eq: nu exp} may be rewritten as $\frac{\zeta}{e^\zeta - 1} = \frac{{S_{12}}^2}{M_1(S_2 - M_2)}$, where \(\zeta = M_1(\tilde{\nu} - 1)^2\). The function \(\frac{\zeta}{e^\zeta - 1}\) is well-known in mathematical analysis and appears in the generating function for the Bernoulli numbers \(B_n\), satisfying the expansion $\frac{\zeta}{e^\zeta - 1} = \sum_{n = 0}^\infty \frac{B_n \zeta^n}{n!} = 1 - \frac{\zeta}{2} + \frac{\zeta^2}{12} - \frac{\zeta^4}{720} + \cdots$. This series representation may offer a more computationally efficient approach for approximating \(\tilde{\nu}\) in the regime of small \(\zeta = M_1(\tilde{\nu} - 1)^2\).


\paragraph{Case I ($\beta = 1$ or $\beta = -1$):}
Solving for $\tilde{\gamma}$ (and thus $\tilde{\nu}$), one may either apply Equation~\ref{eq: nu exp}, or alternatively, equate the model-based expression for ${Cov}(X_1, X_2)$ (Equation \ref{eq: Cov(X1X2) exp}) with the sample covariance. Using the latter, this yields:
\begin{equation}
    \tilde{\alpha}(1 - \tilde{\nu})\tilde{\mu}^{\tilde{\nu} - 1} \text{sign}(\beta) = S_{12}. \nonumber
\end{equation}
Applying the Lambert $W$ function allows us to express $\tilde{\nu}$ explicitly as:
\begin{equation}
    \tilde{\nu} = W\left( -\frac{S_{12}}{\tilde{\alpha}} \text{sign}(\beta) \right) + 1. \nonumber
\end{equation}
Naturally, since the Lambert $W$ function is only defined for arguments satisfying $-\frac{S_{12}}{\tilde{\alpha}}\text{sign}(\beta)  =  -\frac{S_{12}}{M_1}\text{sign}(\beta)  \geq -\frac{1}{e}$, a necessary condition for the existence of a real-valued solution is $\frac{M_1}{S_{12}}\text{sign}(\beta)  \geq e$. Furthermore, since $0 < \tilde{\nu} < 1$, we restrict attention to the principal branch $W_0$ of the Lambert $W$ function to ensure that the solution remains within the admissible parameter space, with the additional requirement that $-\frac{S_{12}}{\tilde{\alpha}} \text{sign}(\beta) < 0 \implies \text{sign}(\beta) S_{12} >0$. \\

Furthermore, we solve for solve for $\tilde{\delta}$ just as before, by merely equating Equation \ref{eq: E(X2) exp} to the sample mean:
\begin{eqnarray}
	\tilde{\delta} &=& M_2 - \text{sign}(\beta) (1 - \tilde{\mu}^{\tilde{\nu} - 1}).
	\nonumber
\end{eqnarray} 
\paragraph{Case II ($\gamma = 1$):}
Since $\gamma =1$, we have $\nu = \frac{1}{e}$, hence we equate the model-based expression for ${Cov}(X_1, X_2)$ (Equation \ref{eq: Cov(X1X2) exp}) with the sample covariance to obtain:
\begin{eqnarray}
    \tilde{\beta} = \frac{S_{12}}{\tilde{\alpha}(1- \frac{1}{e})\tilde{\mu}^{( \frac{1}{e} - 1)}}. \nonumber
\end{eqnarray}
Afterwhich, we solve for $\tilde{\delta}$ equating Equation \ref{eq: E(X2) exp} to the sample mean:
\begin{eqnarray}
	\tilde{\delta} &=& M_2 - \tilde{\beta}(1 - \tilde{\mu}^{\frac{1}{e} - 1}).
	\nonumber
\end{eqnarray}
\paragraph{Case IV ($\gamma = 1$ and $\beta = 1$ or $\gamma = 1$ and $\beta = -1$):}
We solve for $\tilde{\delta}$ by merely equating Equation \ref{eq: E(X2) exp} to the sample mean:
\begin{equation}
	\tilde{\delta}  = M_2 - \text{sign}(\beta)(1 - \tilde{\mu}^{\frac{1}{e} - 1}).
	\nonumber
\end{equation}

\paragraph{Case V ($\delta = 0$ and $\gamma = 1$):}
We solve for $\tilde{\beta}$ by merely equating Equation \ref{eq: E(X2) exp} to the sample mean:
\begin{equation}
	\tilde{\beta}  = \frac{M_2}{1 - \tilde{\mu}^{\frac{1}{e}-1}}.
	\nonumber
\end{equation}

\subsection{Correlations}
We analyse the correlation between $X_1$ and $X_2$. Because each model imposes bounds on the attainable correlation $\rho$, we report these feasible ranges explicitly; otherwise users might apply a model to data with $\rho$ outside its admissible set, leading to misspecification or nonexistence of estimators. We utilise the multivariate $\rho$ expression in Equation \ref{eq: rho exp}, that is $\rho(\alpha, \beta, \gamma, \delta)$ noting again $\mu = e^\alpha$ and $\nu = e^{-\nu}$. Now one may simply ascertain $\rho$ bounds through numerical optimisation (and we utilise this as a safety check as well) to find said bounds, yet we use the approach of studying asymptotic regimes - where these limits reveal which directions in the parameter space push $\rho$ toward its attainable extrema. Furthermore, we note that $\lim_{\alpha \to 0^+} \rho(\alpha, \beta, \gamma, \delta) = 0$ (since $X_1$ becomes nearly degenerate and the induced dependence vanishes), hence we do not take this limit first in the subsequent sections as this would not offer us greater insight.

\subsubsection{Full exponential model}
With regards to the full exponential model, we note that by taking the limits of Equation \ref{eq: rho exp}(limits are commutable):
\begin{equation}
    \lim_{\gamma \to \infty}\lim_{\beta\to \infty}\rho(\alpha,\beta, \gamma, \delta)  = \sqrt{\frac{\alpha}{\mu - 1}}, \label{eq: full rho bounds 1}
\end{equation}
and 
\begin{equation}
    \lim_{\gamma \to \infty}\lim_{\beta\to -\infty}\rho(\alpha,\beta, \gamma, \delta)  = -\sqrt{\frac{\alpha}{\mu - 1}}.\label{eq: full rho bounds 2}
\end{equation}
Seeing as these results are not functions of $\delta$, we confirm that the bounds of the correlation function of the full exponential model, $\rho(\alpha,\beta, \gamma, \delta)$, and the bounds of the correlation function of the Case III sub-model, $\rho(\alpha,\beta, \gamma)$, are identical. These bounds being $-1<\rho <1$ by taking $\lim_{\alpha\to 0^+}$ of Equation \ref{eq: full rho bounds 1} and \ref{eq: full rho bounds 2} noting said equations are monotonic with respect to $\alpha$. 
\subsubsection{Case I}
\paragraph{$\beta =1$}
Similarly, we take the limits of Equation \ref{eq: rho exp} after substituting in $\beta = 1$, and noting for positive correlation (that is, $\beta >0$) we must have $\delta \geq 0$, we obtain (limits are commutable):
\begin{equation}
    \lim_{\gamma \to \infty}\lim_{\delta\to 0^+}\rho(\alpha,\beta = 1, \gamma, \delta)  = \sqrt{\frac{\alpha}{\mu^2- 1}}, \label{eq: case1 rho bounds 1}
\end{equation}
which elucidates that the bounds for $\rho$ are $0 < \rho < \frac{\sqrt{2}}{2}$ after taking $\lim_{\alpha\to 0}$ to Equation \ref{eq: case1 rho bounds 1}, noting said equations monotonicity with respect to $\alpha$. 

\paragraph{$\beta = -1$}
Likewise, we take the limits of Equation \ref{eq: rho exp} after substituting in $\beta = -1$, and noting for negative correlation we must have $\delta \geq -\beta$, we obtain (limits are commutable):
\begin{equation}
    \lim_{\gamma \to \infty}\lim_{\delta\to 1^+}\rho(\alpha,\beta = -1, \gamma, \delta)  = -\sqrt{\frac{\alpha}{2\mu- 1}}, \label{eq: case1 neg rho bounds 1}.
\end{equation}
Since Equation \ref{eq: case1 neg rho bounds 1} is not monotonic with respect to $\alpha$, we rather computationally minimise: $\alpha^* = \operatorname*{argmin}_{\alpha} -\sqrt{\frac{\alpha}{2\mu- 1}}   = 0.76804$, such that $-\sqrt{\frac{\alpha^*}{2e^{\alpha^*}- 1}} = -0.48162$. Hence, we obtain the bounds $-0.48162 < \rho < 0$.

\subsection{Case II}
Similarly, taking the limits of Equation \ref{eq: rho exp}:
\begin{equation}
    \lim_{\beta\to \infty}\rho(\alpha,\beta, \gamma = 1, \delta)  =-\frac{\sqrt{\alpha} \frac{1-e}{e}}{\sqrt{e^{\frac{\alpha(1-e)^2}{e^2}}-1}}, \label{eq: case2 rho bounds 1}
\end{equation}
and 
\begin{equation}
    \lim_{\beta\to -\infty}\rho(\alpha,\beta, \gamma = 1, \delta)  = \frac{\sqrt{\alpha} \frac{1-e}{e}}{\sqrt{e^{\frac{\alpha(1-e)^2}{e^2}}-1}}. \label{eq: case2 rho bounds 2}
\end{equation}
Afterwhich, taking $\lim_{\alpha \to 0^+}$ on Equation \ref{eq: case2 rho bounds 1} and \ref{eq: case2 rho bounds 2}, we obtain $-1 < \rho < 1$.

\subsection{Case IV}
\paragraph{$\beta =1$}
Again, taking the limits of Equation \ref{eq: rho exp}, and noting for positive correlation (that is, $\beta >0$) we must have $\delta \geq 0$:
\begin{equation}
    \lim_{\delta\to 0^+}\rho(\alpha,\beta = 1, \gamma = 1, \delta)  = -\frac{\sqrt\alpha \frac{1-e}{e}e^{\frac{\alpha}{e}}}{\sqrt{e^{2\alpha}\left( 1- e^{2\alpha\frac{1-e}{e}}\right) - e^{\frac{\alpha}{e}+\alpha}\left(e^{\alpha\frac{1-e}{e^2} }-1 \right)}} \label{eq: case4 rho bounds 1}
\end{equation}
Noting the monotonicity of Equation \ref{eq: case4 rho bounds 1}, we take $\lim_{\alpha\to 0^+}$ to obtain $\sqrt{\frac{e -1}{2e-1}} \approx 0.62233$. Hence, our bounds are $ 0<\rho < \sqrt{\frac{e -1}{2e-1}} \approx 0.62233$ also illustrated in Figure \ref{fig: rho case 4 pos} displaying $\rho(\alpha,\beta = 1, \gamma = 1, \delta)$.

\paragraph{$\beta =-1$}
Again, taking the limits of Equation \ref{eq: rho exp}, and noting for negative correlation (that is, $\beta <0$) we must have $\delta \geq -\beta$:
\begin{equation}
    \lim_{\delta\to 1^+}\rho(\alpha,\beta = 1, \gamma = 1, \delta)  = \frac{\sqrt{\alpha}\frac{1-e}{e} e^{\frac{\alpha}{e}}}{\sqrt{e^{\frac{\alpha}{e}+\alpha}\left( e^{\alpha \frac{1-e}{e^2}}- e^{\alpha \frac{1-e}{e}} + 1\right)}} \label{eq: case4 neg rho bounds 1}
\end{equation}
Now Equation \ref{eq: case4 neg rho bounds 1} is not monotonic with respect to $\alpha$, so instead of taking limits, we rather minimise computationally: $\alpha^* = \operatorname*{argmin}_{\alpha}  \left\{\lim_{\delta\to 1}\rho(\alpha,\beta = 1, \gamma = 1, \delta) \right\} = 1.427885$, such that $\left\{\lim_{\delta\to 1}\rho(\alpha,\beta = 1, \gamma = 1, \delta) \right\}_{ \rvert \alpha = \alpha^*} \approx -0.419946$. Hence, our bounds are $ \approx  -0.419946<\rho <0$ also illustrated in Figure \ref{fig: rho case 4 neg} displaying $\rho(\alpha,\beta = -1, \gamma = 1, \delta)$.

\begin{figure}[H]
  \centering
  \begin{subfigure}{0.48\textwidth}
    \centering
    \includegraphics[width=\linewidth]{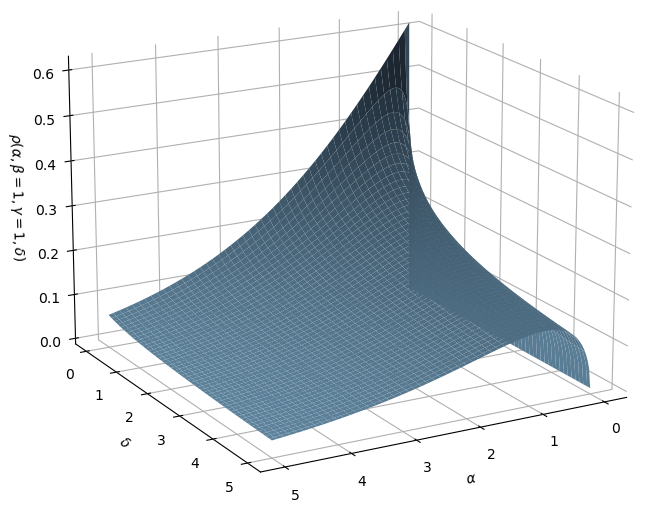}
    \caption{$\rho(\alpha,\beta = 1, \gamma = 1, \delta)$ against $\alpha>0$ and $\delta \geq 0$ noting a maximum achieved at $\approx 0.622$.}
    \label{fig: rho case 4 pos}
  \end{subfigure}
  \hfill
  \begin{subfigure}{0.48\textwidth}
    \centering
    \includegraphics[width=\linewidth]{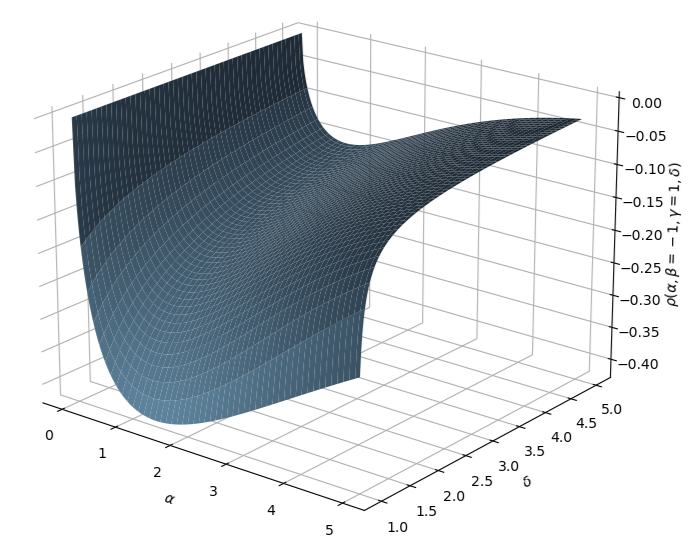}
    \caption{$\rho(\alpha,\beta = -1, \gamma = 1, \delta)$ against $\alpha >0$, $\delta \geq 1$ noting a minimum achieved at $\approx -0.42$.}
    \label{fig: rho case 4 neg}
  \end{subfigure}
  \caption{$\rho(\alpha,\beta = 1, \gamma = 1, \delta)$ and $\rho(\alpha,\beta = -1, \gamma = 1, \delta)$.}
\end{figure}

\subsection{Case V}
Similarly, taking the limits of Equation \ref{eq: rho exp} (limits are commutable):
\begin{equation}
    \lim_{\alpha\to 0^+}\lim_{\beta\to \infty}\rho(\alpha,\beta, \gamma = 1, \delta = 0)  = 1, \nonumber
\end{equation}
and 
\begin{equation}
    \lim_{\alpha\to 0^+}\lim_{\beta\to -\infty}\rho(\alpha,\beta, \gamma = 1, \delta = 0)  = -1.\nonumber
\end{equation}
Hence, the bounds are $-1 < \rho < 1$.

We summarise the correlation $\rho(\alpha, \beta, \gamma, \delta)$ bounds in Table \ref{table: rho bounds}, confirmed with numerical optimisation.
\begin{table}[H]
    \centering
    \begin{tabular}{lcc}
        \hline
        Model & $\rho$ Lower Bound &  $\rho$ Upper Bound\\ \hline
        Full Exponential (including Case III) & -1 & 1 \\ 
        Case I ($\beta = 1$) & 0& $\frac{\sqrt{2}}{2} \approx 0.707$  \\
        Case I ($\beta = -1$) & $\approx-0.482$ &  0 \\ 
        Case II ($\gamma = 1$)& -1 & 1 \\
        Case IV ($\beta = 1, \gamma = 1$)  & 0 & $\sqrt{\frac{e-1}{2e-1}} \approx 0.622$ \\ 
        Case IV ($\beta = -1, \gamma = 1$)  & $\approx -0.420$ & 0 \\
        Case V ($\delta = 0, \gamma = 1$)  & -1 & 1 \\
        \hline
    \end{tabular}
    \caption{$\rho$ lower and upper bounds for the full exponential model and sub-models.}
    \label{table: rho bounds}
\end{table}

\subsection{Maximum likelihood estimation}
Consider again, the data in the form of $\textbf{X}^{(1)}, \textbf{X}^{(2)}, ..., \textbf{X}^{(n)}$ (where for each $i = 1, \ldots, n$, $\textbf{X}^{(i)} = (X_{1i}, X_{2i})^{T} )$ are obtained, which are independent and identically distributed according to Equations \ref{eq: exp Function 1} - \ref{eq: exp Function 2}, the the likelihood function is as follows from Equation \ref{eq:P(X1, X2) exp}, utilising $\nu = e^{-\gamma}$:
\begin{align}
    L(\alpha, \beta, \gamma, \delta) &= \prod_{i=1}^{n}  \frac{e^{-\alpha} \alpha^{x_{1i}}}{x_{1i}!} \frac{e^{-\left(\delta + \beta \left(1 - e^{- \gamma x_{1i}}\right)\right)} \left(\delta + \beta \left(1 - e^{- \gamma x_{1i}}\right)\right)^{x_{2i}}}{x_{2i}!} \nonumber \\
    &= \frac{e^{-n(\alpha +\beta + \delta)} \alpha^{\sum_{i= 1}^nx_{1i}} e^{\beta\sum_{i = 1}^n\nu^{x_{1i}}} \prod_{i = 1}^n  \left(\delta + \beta \left(1 - \nu^ {x_{1i}}\right)\right)^{x_{2i}} }{\prod_{i = 1}^n  \left( x_{1i}!x_{2i}!\right)}. \nonumber
\end{align}
Now the log-likelihood is:
\begin{align}
    l = \log \left(L(\alpha, \beta, \gamma, \delta) \right) = &-n(\alpha +\beta +\delta) + \log(\alpha) \sum_{i =1}^nx_{1i} + \beta \sum_{i = 1}^n\nu^{x_{1i}} + \sum_{i = 1}^n x_{2i}\log \left(\delta + \beta \left(1 - \nu^{x_{1i}}\right) \right) \nonumber \\
    & +h\left(\mathbf{x}_1, \mathbf{x}_2 \right), \label{eq: log lik exp}
\end{align}
where $h\left(\mathbf{x}_1, \mathbf{x}_2 \right) = -\log \left( \prod_{i = 1}^n  \left( x_{1i}!x_{2i}!\right)\right)$. Now since $\frac{\partial l}{\partial \alpha} = 0 \implies \hat{\alpha} = \frac{\sum_{i = 1}^nx_{1i}}{n} = \tilde{\alpha}$, that is, the m.l.e of $\alpha$ is equivalent to the m.m.e of $\alpha$. The m.l.e's of $\beta, \gamma, \delta$ are to be solved numerically using Equation \ref{eq: log lik exp}

\subsection{Likelihood ratio tests}
The general form of the likelihood ratio test is:
\begin{align}
    \Lambda = \frac{sup_{\boldsymbol{\theta}\in\Theta_0}L(\boldsymbol{\theta})}{sup_{\boldsymbol{\theta}\in\Theta}L(\boldsymbol{\theta})} \label{eq: lik ratio test}.
\end{align}
Under regularity conditions, if $H_0$ holds, then 
$-2 \log(\Lambda) \;\;\xrightarrow{d}\;\; \chi^2_{\dim(\Theta) - \dim(\Theta_0)}$. We reject $H_0$ when $-2 \log(\Lambda) > \chi^2_{\dim(\Theta) - \dim(\Theta_0),\,1-\alpha}$
where $\chi^2_{\dim(\Theta) - \dim(\Theta_0),\,1-\alpha}$ denotes the $(1-\alpha)$ critical value of the chi-square distribution with $\dim(\Theta) - \dim(\Theta_0)$ degrees of freedom. We note the natural parameter space of the full exponential model is $\Theta = \{ (\alpha, \beta, \gamma, \delta )^{T}: \alpha > 0, \gamma>  0, \beta \in \mathbb{R} \setminus \{0\}, \delta \geq \max(-\beta, 0) \}$. Let $\hat{\alpha}, \hat{\beta}, \hat{\gamma}, \hat{\delta}$ be the m.l.e's of the full exponential model. Furthermore, we denote ${\hat{\alpha}}^*, \hat{\beta}^*, \hat{\gamma}^*, \hat{\delta}^*$ as any m.l.e under $H_0$ in this section.

\subsubsection{Testing $\beta = \pm 1$}
 Under $H_0$, the natural parameter space is $\Theta_0 = \{ (\alpha, \gamma, \delta )^{T}: \alpha > 0, \gamma > 0, \delta \geq \mathbb{I}\left(\beta <0 \right) \}$. The generalized likelihood ratio test statistic in Equation \ref{eq: lik ratio test} is:
\begin{align}
    \Lambda = \frac{ \frac{e^{-n(\hat{\alpha}^* +\text{sign}(\beta) + \hat{\delta}^*)} {(\hat{\alpha}^{*})}^{\sum_{i= 1}^{n}x_{1i}} e^{\text{sign}(\beta) \sum_{i = 1}^n({\hat{\nu}^*})^{x_{1i}}} \prod_{i = 1}^n  \left(\hat{\delta}^* + \text{sign}(\beta)  \left(1 - ({\hat{\nu}}^{*})^ {x_{1i}}\right)\right)^{x_{2i}} }{\prod_{i = 1}^n  \left( x_{1i}!x_{2i}!\right)}}{\frac{e^{-n(\hat{\alpha} +\hat{\beta} + \hat{\delta})} \hat{\alpha}^{\sum_{i= 1}^{n}x_{1i}} e^{\hat{\beta}\sum_{i = 1}^n\hat{\nu}^{x_{1i}}} \prod_{i = 1}^n  \left(\hat{\delta} + \hat{\beta} \left(1 - \hat{\nu}^ {x_{1i}}\right)\right)^{x_{2i}} }{\prod_{i = 1}^n  \left( x_{1i}!x_{2i}!\right)}}.\nonumber
\end{align}
Since $\hat{\alpha}^* = \hat{\alpha}$, we have:
\begin{eqnarray}
	\Lambda &=& e^{-n\left(\text{sign}(\beta) +\hat{\delta}^* -\hat{\beta} - \hat{\delta}  \right)}e^{\left({\text{sign}(\beta) \sum_{i = 1}^n({\hat{\nu}^*})^{x_{1i}}} -\hat{\beta}\sum_{i = 1}^n\hat{\nu}^{x_{1i}}   \right)} \prod_{i = 1}^{n}  \left[ \frac{\hat{\delta}^* +  \text{sign}(\beta) \left(1 - ({\hat{\nu}}^*)^ {x_{1i}}\right)} {\hat{\delta} + \hat{\beta} \left(1 - \hat{\nu}^ {x_{1i}}\right)} \right]^{x_{2i}}. \nonumber
\end{eqnarray}	
And by taking the logarithm, we obtain:
\begin{align}
	\log{\Lambda} = &-n\left(\text{sign}(\beta) +\hat{\delta}^* -\hat{\beta} - \hat{\delta}  \right) + {\text{sign}(\beta) \sum_{i = 1}^n({\hat{\nu}^*})^{x_{1i}}} -\hat{\beta}\sum_{i = 1}^n\hat{\nu}^{x_{1i}}  \nonumber \\  &+ \sum_{i=1}^{n} x_{2i} \log{ \left[ \frac{\hat{\delta}^* +  \text{sign}(\beta) \left(1 - ({\hat{\nu}}^*)^ {x_{1i}}\right)} {\hat{\delta} + \hat{\beta} \left(1 - \hat{\nu}^ {x_{1i}}\right)} \right]} .\nonumber
\end{align}

\subsubsection{Testing $\gamma =  1$}
 Under $H_0$, the natural parameter space is $\Theta_0 = \{ (\alpha, \beta, \delta )^{T}: \alpha > 0, \beta \in \mathbb{R} \setminus \{0\}, \delta \geq \max(-\beta, 0) \ \}$. The logarithm of the generalized likelihood ratio test statistic in Equation \ref{eq: lik ratio test} is:
 
\begin{align}
	\log{\Lambda} = &-n\left(\hat{\beta}^*+\hat{\delta}^* -\hat{\beta} - \hat{\delta}  \right) + {\hat{\beta}^* \sum_{i = 1}^n\left(\frac{1}{e}\right)^{x_{1i}}} -\hat{\beta}\sum_{i = 1}^n\hat{\nu}^{x_{1i}}  \nonumber + \sum_{i=1}^{n} x_{2i} \log{ \left[ \frac{\hat{\delta}^* +  \hat{\beta}^* \left(1 - \left(\frac{1}{e}\right)^ {x_{1i}}\right)} {\hat{\delta} + \hat{\beta} \left(1 - \hat{\nu}^ {x_{1i}}\right)} \right]}. \nonumber
\end{align}

\subsubsection{Testing $\delta =  0$}
 Under $H_0$, the natural parameter space is $\Theta_0 = \{ (\alpha, \beta, \gamma )^{T}: \alpha > 0, \beta \in \mathbb{R} \setminus \{0\}, \gamma >0 \ \}$. The logarithm of the generalized likelihood ratio test statistic in Equation \ref{eq: lik ratio test} is:
 
\begin{align}
	\log{\Lambda} = &-n\left(\hat{\beta}^*-\hat{\beta} - \hat{\delta}  \right) + {\hat{\beta}^* \sum_{i = 1}^n\left(\hat{\nu}^*\right)^{x_{1i}}} -\hat{\beta}\sum_{i = 1}^n\hat{\nu}^{x_{1i}}  \nonumber + \sum_{i=1}^{n} x_{2i} \log{ \left[ \frac{\hat{\beta}^* \left(1 - \left(\hat{\nu}^*\right)^ {x_{1i}}\right)} {\hat{\delta} + \hat{\beta} \left(1 - \hat{\nu}^ {x_{1i}}\right)} \right]}. \nonumber
\end{align}

\subsubsection{Testing $\beta = \pm 1$ and $\gamma = 1$}
 Under $H_0$, the natural parameter space is $\Theta_0 = \{ (\alpha, \delta )^{T}: \alpha > 0, \delta \geq \mathbb{I}\left(\beta <0 \right)\}$. The logarithm of the generalized likelihood ratio test statistic in Equation \ref{eq: lik ratio test} is:
 
\begin{align}
	\log{\Lambda} = &-n\left(\text{sign}(\beta) +\hat{\delta}^* -\hat{\beta} - \hat{\delta}  \right) + {\text{sign}(\beta) \sum_{i = 1}^n\left(\frac{1}{e}\right)^{x_{1i}}} -\hat{\beta}\sum_{i = 1}^n\hat{\nu}^{x_{1i}}  \nonumber + \sum_{i=1}^{n} x_{2i} \log{ \left[ \frac{\hat{\delta}^* +  \text{sign}(\beta) \left(1 - \left(\frac{1}{e}\right)^ {x_{1i}}\right)} {\hat{\delta} + \hat{\beta} \left(1 - \hat{\nu}^ {x_{1i}}\right)} \right]}.\nonumber
\end{align}

\subsubsection{Testing $\delta =  0$ and $\gamma = 1$}
 Under $H_0$, the natural parameter space is $\Theta_0 = \{ (\alpha, \beta )^{T}: \alpha > 0, \beta \in \mathbb{R} \setminus \{0\}  \}$. The logarithm of the generalized likelihood ratio test statistic in Equation \ref{eq: lik ratio test} is:
 
\begin{align}
	\log{\Lambda} = &-n\left(\hat{\beta}^*-\hat{\beta} - \hat{\delta}  \right) + {\hat{\beta}^* \sum_{i = 1}^n\left(\frac{1}{e}\right)^{x_{1i}}} -\hat{\beta}\sum_{i = 1}^n\hat{\nu}^{x_{1i}}  \nonumber + \sum_{i=1}^{n} x_{2i} \log{ \left[ \frac{\hat{\beta}^* \left(1 - \left(\frac{1}{e}\right)^ {x_{1i}}\right)} {\hat{\delta} + \hat{\beta} \left(1 - \hat{\nu}^ {x_{1i}}\right)} \right]}.\nonumber
\end{align}

\section{Lomax}
We restrict $F(x_1; \boldsymbol{\theta})$, with $\boldsymbol{\theta} = \left[ \gamma', \eta'  \right]^T$, to be the Lomax distribution function. Thus, we have:
\begin{eqnarray}
	X_{1} &\sim & \text{Poisson}(\alpha')  \label{eq: lomax Function 1} \\ 
	X_{2} \mid X_{1} &\sim & \text{Poisson}\left(\delta' +  \beta' \left(1 - \left(\frac{\gamma'}{x_1 +\gamma'}\right)^{\eta'}\right)\right),  \label{eq: lomax Function 2}
\end{eqnarray}
where the scale parameter $\gamma' > 0$ and shape parameter $\eta'>0$ govern the rate of change (gradient) of the regression function. The parameters satisfy $\alpha', \gamma', \eta' > 0$. To ensure that $\lambda_2(x_1) > 0$ for all $x_1 > 0$, it is necessary that $\beta' \in \mathbb{R} \setminus \{0\}$ with $\delta' \geq \max(-\beta', 0)$ - which corresponds to $\delta' \geq -\beta'$ in the case of negative correlation $(\beta' < 0)$ and $\delta' \geq 0$ when the correlation is positive $(\beta' > 0)$.\\

We also examine various sub-models that arise from the Lomax($\eta' = 1$, $\gamma'$) model by constraining specific parameters. These include removing the amplitude control - thereby fixing positive and negative correlation by setting $\beta' = 1$ and $\beta' = -1$ respectively, fixing the rate parameter as $\gamma' = 1$, or jointly constraining both. In addition, we consider a no-intercept model by setting $\delta' = 0$, which by construction precludes the possibility of negative correlation. We further note (as before with the full exponential model), that for the no-intercept model, Case III: $\delta' = 0$, when $x_{1}=0$, the conditional distribution of $x_{2}$ degenerates at zero, implying that $x_{2}=0$ with probability one. Consequently, any empirical observation with $x_{1}=0$ and $x_{2}>0$ is assigned zero probability under the model, which yields a likelihood of zero and indicates model misspecification for such observations. Figure \ref{fig:exp flow} provides a schematic illustration of the resulting sub-models.
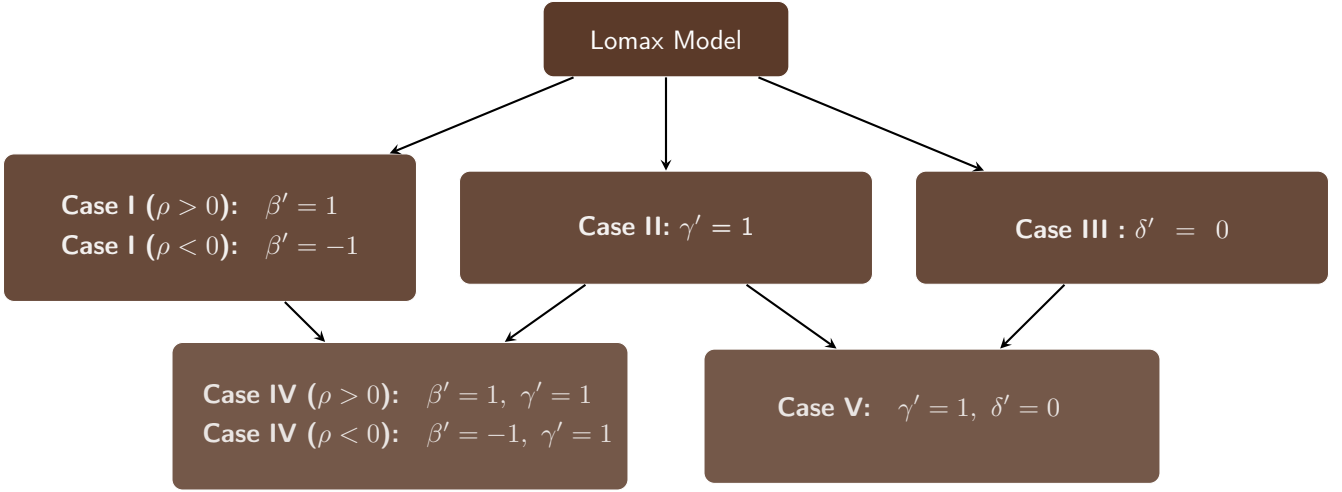
\begin{figure}[H] \centering
	\begin{tikzpicture}[node distance=2cm]
		\node (exp) [io2] {Lomax Model};
		\node (expcase1) [processalt, below of = exp,  xshift = -6cm, yshift = -0.5cm]{\[
        \begin{aligned}
		& \textbf{Case I ($\rho > 0$):} \quad \beta' = 1 \\ 
        & \textbf{Case I ($\rho < 0$):} \quad \beta' = -1 
        \end{aligned}\]};
		\node (expcase2) [processalt, below of = exp, yshift = -0.5cm]{\textbf{Case II:} $\gamma'$ = 1\\ };
	\node (expcase3) [processalt2, below of = expcase1, xshift = 2.5cm, yshift = -0.5cm] {\[
  \begin{aligned}
    &\textbf{Case IV ($\rho > 0$):} \quad \beta' = 1,\ \gamma' = 1 \\
    &\textbf{Case IV ($\rho < 0$):} \quad \beta' = -1,\ \gamma' = 1
  \end{aligned}\]
};
\node (expcase4) [processalt, below of = exp, xshift = 6cm, yshift = -0.5cm]{\textbf{Case III :} $\delta' = 0$};
\node (expcase5) [processalt2, below of = expcase4, xshift = -2.5cm, yshift = -0.5cm] {\[
  \begin{aligned}
    &\textbf{Case V:} \quad \gamma' = 1,\ \delta' = 0 
\end{aligned}\]
};
		\draw [arrow] (exp) -- (expcase1);
		\draw [arrow] (exp) -- (expcase2);
		\draw [arrow] (expcase1) -- (expcase3);
		\draw [arrow] (expcase2) -- (expcase3);
		\draw [arrow] (exp) -- (expcase4);
		\draw [arrow] (expcase4) -- (expcase5);
		\draw [arrow] (expcase2) -- (expcase5);
	\end{tikzpicture}
	\caption{Flow diagram of Lomax sub-models.}
    \label{fig:exp flow}
\end{figure}

Before proceeding, we note that, for the exponential model, taking the limit $\lim_{\gamma \to \infty}$ of the regression form in Equation \ref{eq: exp Function 2} yields the same result as taking $\lim_{\gamma' \to 0^+}$ of the regression form in Equation \ref{eq: lomax Function 2}. Conversely, taking the limit $\lim_{\gamma \to 0^+}$ in Equation \ref{eq: exp Function 2} corresponds to $\lim_{\gamma' \to \infty}$ in Equation \ref{eq: lomax Function 2}. We summarize these equivalences below:
\begin{align}
    \lim_{\gamma \to \infty} \left\{\delta +\beta\left(1 - e^{-\gamma x_1} \right)\right\} & =\delta +\beta, \nonumber\\
    \lim_{\gamma' \to 0^+} \left\{\delta' +  \beta' \left(1 - \left(\frac{\gamma'}{x_1 +\gamma'}\right)^{\eta'}\right)\right\} \nonumber & = {\delta}' + {\beta}', \nonumber
\end{align}
and 
\begin{align}
    \lim_{\gamma \to 0} \left\{\delta +\beta\left(1 - e^{-\gamma x_1} \right)\right\} & =\delta, \nonumber\\
    \lim_{\gamma' \to \infty} \left\{\delta' +  \beta' \left(1 - \left(\frac{\gamma'}{x_1 +\gamma'}\right)^{\eta'}\right)\right\} \nonumber & = {\delta}'. \nonumber
\end{align}
This implies that under these limits, the exponential and Lomax models become equivalent, with $\alpha = \alpha', \beta = \beta'$ and $\delta = \delta'$ (where $\eta'$ becomes inconsequential under these limits). Specifically, we have:
$$
X_1 \sim \text{Poisson}(\alpha), \quad X_2 \mid X_1 \sim \text{Poisson}(\delta + \beta) \quad \text{as } \lim_{\gamma \to \infty} \text{ or } \lim_{\gamma' \to 0^+},
$$  
and  
$$
X_1 \sim \text{Poisson}(\alpha), \quad X_2 \mid X_1 \sim \text{Poisson}(\delta) \quad \text{as } \lim_{\gamma \to 0} \text{ or } \lim_{\gamma' \to \infty}.
$$
That is to say, both exponential and Lomax models become independence models - where $X_2$ is no longer conditioned on $X_1$.
\subsection{Probability mass functions}
Following from the definition of a Poisson distribution:
\begin{eqnarray}
	P(X_{1}) &=& \frac{e^{-\alpha'}(\alpha')^{x_{1}}}{x_{1}!}, \nonumber\\
	P(X_{2}|X_{1}) &=& \frac{e^{-\delta' - \beta' \left(1 -  \left(\frac{\gamma'}{x_1 +\gamma'}\right)^{\eta'}   \right)} \left\{\delta' + \beta' \left(1 - \left(\frac{\gamma'}{x_1 +\gamma'}\right)^{\eta'}\right) \right\}^{x_2}       } {x_{2}!}  ,\nonumber \\    
	P(X_{1}, X_{2}) &=& P(X_{1})P(X_{2}|X_{1}) \nonumber\\  
	&=& \frac{e^{-\alpha'}(\alpha')^{x_{1}}}{x_{1}!} \frac{e^{-\delta' - \beta' \left(1 -  \left(\frac{\gamma'}{x_1 +\gamma'}\right)^{\eta'}   \right)} \left\{\delta' + \beta' \left(1 - \left(\frac{\gamma'}{x_1 +\gamma'}\right)^{\eta'}\right) \right\}^{x_2}       } {x_{2}!}, \label{eq:P(X1, X2) lomax}
\end{eqnarray}
where $x_{1}$, $x_{2}$ $\in$ $\{0, 1, 2...\}$, $\alpha', \gamma', \eta' > 0$ and $\beta' \in \mathbb{R}\setminus \{0\}$ with $\delta' \geq \max(-\beta', 0)$.
\subsection{Moments} \label{sec: moments lomax}
We again denote $e^{\alpha'} = \mu'$, and for ease of simplification, we express the summations in terms of hypergeometric functions, and accordingly restrict $\eta'$ to take discrete values; that is, we assume $\eta' \in \mathbb{N}$. Now:
\begin{align}
    E(X_1) &= \alpha', \label{eq: E(X1) lomax} \\
    E(X_2) = E_{X_{1}}\{E(X_2|X_1 = x_1)\}  &=  \delta' +  \beta' - \beta' E_{X_1} \left\{\left(\frac{\gamma'}{x_1 +\gamma'}\right)^{\eta'}\right\}\nonumber \\
    & = \delta'+ \beta'\left(1 - \frac{1}{\mu'}{}_{\eta'}F_{\eta'}(\gamma', \ldots, \gamma', (\gamma'+1), \ldots, (\gamma'+1); \alpha') \right), \label{eq: E(X2) lomax}
\end{align}
since:
\begin{align}
    E_{X_1} \left\{\left(\frac{\gamma'}{x_1 +\gamma'}\right)^{\eta'}\right\} &= \sum_{i = 0}^\infty \left(\frac{\gamma'}{\gamma'+i} \right)^{\eta'} \frac{e^{-\alpha'}(\alpha')^i}{i!} \nonumber\\
    &= \sum_{i = 0}^\infty \left(\frac{(\gamma')_i}{(\gamma'+1)_i} \right)^{\eta'} \frac{e^{-\alpha'}(\alpha')^i}{i!} \nonumber\\
    & = \frac{1}{e^{\alpha'}} {}_{\eta'}F_{\eta'}(\gamma', \ldots, \gamma', (\gamma'+1), \ldots, (\gamma'+1); \alpha'), \label{eq: E(e) lomax}
\end{align}
and using the Pochhammer symbol $
(q)_n = 
\begin{cases}
1 & \text{if } n = 0 \\
q(q+1)\cdots(q+n-1) & \text{if } n > 0,
\end{cases}
$
\begin{align}
    \frac{\gamma'}{\gamma'+i} & = \frac{\gamma'(\gamma'+1)(\gamma'+2)\cdots(\gamma'+i - 2)(\gamma'+i-1)}{(\gamma'+1)(\gamma'+2)(\gamma'+3)\cdots(\gamma'+i - 1)(\gamma'+i)} \nonumber \\
    & = \frac{(\gamma')_i}{(\gamma'+1)_i}. \nonumber
\end{align}
We denote $_{\eta'}F_{\eta'}(\gamma', \ldots, \gamma', (\gamma'+1), \ldots, (\gamma'+1); \alpha')$ as $_{\eta'}F_{\eta'}(\gamma')$ going forward. Now, 
\begin{align}
    Var(X_1) &= \alpha' \label{eq: Var(X1) lomax},\\
    Var_{X_2\mid X_1}(X_2\mid X_1) &=  \delta' +  \beta' \left(1 - \left(\frac{\gamma'}{x_1 +\gamma'}\right)^{\eta'}\right) \nonumber \\
    &= E_{X_2\mid X_1}(X_2\mid X_1). \label{eq: Var(X2|X_1) lomax}
\end{align}
Using Equations \ref{eq: E(X2) lomax}, \ref{eq: E(e) lomax} and \ref{eq: Var(X2|X_1) lomax}, we have:
\begin{align}
    Var(X_2) &= E_{X_1} \{Var_{X_2|X_1}(X_2|X_1) \} + Var_{X_1}\{E_{X_2|X_1}(X_2|X_1)    \} \nonumber \\
	&= E(X_2) + (\beta')^{2} Var_{X_1} \left\{\left( \frac{\gamma'}{x_1 +\gamma'}\right)^{\eta'} \right\} \nonumber \\
    & = E(X_2) + (\beta')^2 \left(E_{X_1} \left\{\left( \frac{\gamma'}{x_1 +\gamma'}\right)^{2\eta'} \right\} -  \left(E_{X_1}\left\{\left( \frac{\gamma'}{x_1 +\gamma'}\right)^{\eta'} \right\} \right)^2 \right) \nonumber \\
    & = \delta' + \beta'\left(1 -\frac{1}{\mu'} {}_{\eta'}F_{\eta'}(\gamma')  \right) + (\beta')^2\left(\frac{1}{\mu'} {}_{2\eta'}F_{2\eta'}(\gamma') - \frac{1}{(\mu')^2}\left({}_{\eta'}F_{\eta'}(\gamma')\right)^2 \right). \label{eq: Var(X2) lomax}
\end{align}
 Furthermore from Equation \ref{eq: E(X1) lomax}:
\begin{align}
	E(X_1 X_2) = E_{X_1} \{ E(X_1 X_2 | X_1 )  \}  &= E_{X_1} \{ X_1 E( X_2 | X_1 )  \} \nonumber \\
	&= E_{X_1} \left\{ X_1\left(\delta' +  \beta' \left(1 - \left(\frac{\gamma'}{x_1 +\gamma'}\right)^{\eta'}\right) \right) \right\} \nonumber \\
	&= \alpha' (\delta' +  \beta'  ) - \beta' E_{X_1} \left\{ X_1 \left(\frac{\gamma'}{x_1 +\gamma'}\right)^{\eta'} \right\}  \nonumber \\
    & = \alpha'\left(\delta' + \beta'\left( 1 - \frac{1}{\mu'}\left(\frac{\gamma'}{\gamma'+1}\right)^{\eta'}{}_{\eta'}F_{\eta'}(\gamma'+1) \right) \right). \label{eq: E(X1X2) lomax} 
\end{align}
since: 
\begin{align}
    E_{X_1} \left\{ X_1 \left(\frac{\gamma'}{x_1 +\gamma'}\right)^{\eta'} \right\}  & = \sum_{i = 0}^\infty i\left(\frac{\gamma'}{\gamma'+i} \right)^{\eta'} \frac{e^{-\alpha'}(\alpha')^i}{i!} \nonumber\\
    & =  \sum_{i = 1}^\infty \left(\frac{\gamma'}{\gamma'+(i-1)+1} \right)^{\eta'} \frac{e^{-\alpha'}(\alpha')^{i-1}\alpha'}{(i-1)!} \nonumber\\
    & = \alpha' \frac{1}{e^{\alpha'}} \left(\frac{\gamma'}{\gamma'+1}\right)^{\eta'}\sum_{j = 0}^\infty \left(\frac{\gamma'+1}{\gamma'+j+1} \right)^{\eta'} \frac{(\alpha')^j}{j!} \nonumber\\
    & =\alpha' \frac{1}{e^{\alpha'}} \left(\frac{\gamma'}{\gamma'+1}\right)^{\eta'}\sum_{j = 0}^\infty \left(\frac{(\gamma'+1)_j}{(\gamma'+2)_j} \right)^{\eta'} \frac{(\alpha')^{j}}{j!} \nonumber\\
    & = \alpha' \frac{1}{e^{\alpha'}} \left(\frac{\gamma'}{\gamma'+1}\right)^{\eta'}{}_{\eta'}F_{\eta'}\left((\gamma'+1), \ldots, (\gamma'+1),(\gamma'+2),\ldots, (\gamma'+2); \alpha'\right). \nonumber
\end{align}
and using:
\begin{align}
    \frac{\gamma'+1}{\gamma'+j+1} & = \frac{(\gamma'+1)(\gamma'+2)(\gamma'+3)\cdots(\gamma'+1+j - 2)(\gamma'+1+j-1)}{(\gamma'+2)(\gamma'+3)(\gamma'+4)\cdots(\gamma'+2+j - 2)(\gamma'+2 +j -1 )} \nonumber \\
    & = \frac{(\gamma'+1)_j}{(\gamma'+2)_j}. \nonumber
\end{align}
Now from Equations \ref{eq: E(X1) lomax}, \ref{eq: E(X2) lomax} and \ref{eq: E(X1X2) lomax}, the covariance between $X_1$ and $X_2$ is:
\begin{align}
	Cov(X_1, X_2) &= E(X_1 X_2) - E(X_1)E(X_2) \nonumber \\
	&= \frac{\alpha'\beta'}{\mu'}\left( {}_{\eta'}F_{\eta'}(\gamma') - \left(\frac{\gamma'}{\gamma'+1}\right)^{\eta'}{}_{\eta'}F_{\eta'}(\gamma'+1) \right).\label{eq: Cov(X1X2) lomax}
\end{align}
We note that since ${}_{\eta'}F_{\eta'}(\gamma')  = \sum_{i = 1}^\infty\left(\frac{\gamma'}{\gamma'+i} \right)^{\eta'} \frac{\alpha'}{i!}$ and $ \left(\frac{\gamma'}{\gamma'+1}\right)^{\eta'}{}_{\eta'}F_{\eta'}(\gamma'+1) = \sum_{i = 1}^\infty\left(\frac{\gamma'}{\gamma'+1+i} \right)^{\eta'} \frac{\alpha'}{i!}$, we confirm that ${}_{\eta'}F_{\eta'}(\gamma') > \left(\frac{\gamma'}{\gamma'+1}\right)^{\eta'}{}_{\eta'}F_{\eta'}(\gamma'+1)$ implying that the sign of $Cov(X_1, X_2)$ is solely dictated by the sign of $\beta'$ as postulated before. Now the associated correlation between $X_1$ and $X_2$, using Equations \ref{eq: Var(X1) lomax}, \ref{eq: Var(X2) lomax} and \ref{eq: Cov(X1X2) lomax}, is:
\begin{align}
	\rho(\alpha', \beta', \gamma', \delta', \eta') &= \frac {Cov(X_1 X_2)}{\sqrt{Var(X_1) Var(X_2)}} \nonumber \\
&= \frac{\frac{ \sqrt{\alpha'} \beta'}{\mu'} \left( {}_{\eta'}F_{\eta'}(\gamma') - \left(\frac{\gamma'}{\gamma'+1}\right)^{\eta'}{}_{\eta'}F_{\eta'}(\gamma'+1) \right)}{\sqrt{(\beta')^2\left(\frac{1}{\mu'} {}_{2\eta'}F_{2\eta'}(\gamma') - \frac{1}{(\mu')^2}\left({}_{\eta'}F_{\eta'}(\gamma')\right)^2 \right) + \beta'\left(1 -\frac{1}{\mu'} {}_{\eta'}F_{\eta'}(\gamma')  \right) + \delta'}}.\label{eq: rho lomax}
\end{align}
\subsection{Method of moments estimation}
We use the moments derived for the Lomax model in Section \ref{sec: moments lomax} to ascertain m.m.e's for the associated parameters. Nevertheless, the limited number of independent moments renders this approach inadequate for estimating $\tilde{\eta}'$. Being such, we derive moments for a Lomax$\left(\eta' = 1, \gamma' \right)$ model instead.
\subsubsection{Approach I}
Noting the similarities in the formulations between Equation \ref{eq: E(X1) exp} in Section \ref{sec: moments exp} and Equation \ref{eq: E(X1) lomax} in Section \ref{sec: moments lomax}, we may conclude that the m.m.e of $\alpha$ is the same as the m.m.e of $\alpha'$ - that is $\tilde{\alpha} = \tilde{\alpha}'$. Likewise, the similarities between Equations \ref{eq: E(X2) exp}, \ref{eq: Var(X2)} and \ref{eq: Cov(X1X2) exp} in Section \ref{sec: moments exp} and Equations \ref{eq: E(X2) lomax}, \ref{eq: Var(X2) lomax} and \ref{eq: Cov(X1X2) lomax} in Section \ref{sec: moments lomax} elucidate the equivalency of moments:
\begin{align}
    \tilde{\delta} + \tilde{\beta}\left(1 - \frac{1}{\tilde{\mu}}\tilde{\mu}^{\tilde{\nu}}\right) & =  \tilde{\delta'} + \tilde{\beta'}\left(1 - \frac{1}{\tilde{\mu'}} {}_{1}F_{1}(\tilde{\gamma}')\right),
 \label{eq: moment equiv 1 hyper} \\    
    \tilde{\beta}^2 \left(\frac{1}{\tilde{\mu}}\tilde{\mu}^{\tilde{\nu}^2} -\frac{1}{(\tilde{\mu})^2}\tilde{\mu}^{2\tilde{\nu}}\right) & = (\tilde{\beta}')^2\left(\frac{1}{\tilde{\mu}'} {}_{2}F_{2}(\tilde{\gamma}') - \frac{1}{(\tilde{\mu}')^2}\left({}_{1}F_{1}(\tilde{\gamma}')\right)^2 \right),\label{eq: moment equiv 2 hyper}
\end{align}
and
\begin{align}
        \tilde{\beta}(1 - \tilde{\nu})\tilde{\mu}^{\tilde{\nu}} & = \tilde{\beta'}\left({}_{1}F_{1}(\tilde{\gamma}') - \left( \frac{\tilde{\gamma'}}{ \tilde{\gamma'}+1 }\right){}_{1}F_{1}(\tilde{\gamma}'+1) \right). \label{eq: moment equiv 3 hyper}
\end{align}
Subsequently, we may relax the discreteness assumption on $\eta'$ - reverting back to $\eta' >0$ - and substitute ${}_{\eta'}F_{\eta'}(\gamma')  = \sum_{i = 1}^\infty\left(\frac{\gamma'}{\gamma'+i} \right)^{\eta'} \frac{\alpha'}{i!}$ and $ \left(\frac{\gamma'}{\gamma'+1}\right)^{\eta'}{}_{\eta'}F_{\eta'}(\gamma'+1) = \sum_{i = 1}^\infty\left(\frac{\gamma'}{\gamma'+1+i} \right)^{\eta'} \frac{\alpha'}{i!}$ into Equations \ref{eq: moment equiv 1 hyper}, \ref{eq: moment equiv 2 hyper} and \ref{eq: moment equiv 3 hyper}. \\

Equations \ref{eq: moment equiv 1 hyper}, \ref{eq: moment equiv 2 hyper} and \ref{eq: moment equiv 3 hyper} elucidate that if m.m.e's for $\alpha$ (hence $\mu$), $\beta$, $\gamma$ (hence $\nu$) and $\delta$ are calculated for the exponential model using Equations \ref{eq: mme alpha exp}, \ref{eq: mme beta exp}, \ref{eq: nu exp} and \ref{eq: mme delta exp}, one may use Equations \ref{eq: moment equiv 1 hyper}, \ref{eq: moment equiv 2 hyper} and \ref{eq: moment equiv 3 hyper} to numerically determine m.m.e's of $\alpha'$, $\beta'$, $\gamma'$ and $\delta'$ of the Lomax$\left(\eta' = 1, \gamma' \right)$ model.
\subsubsection{Approach II}
Equating Equation \ref{eq: E(X1) lomax} to its sample mean:
\begin{eqnarray}
	\tilde{\alpha}' &=& M_{1} \label{eq: mme alpha lomax}, \\
	e^{\tilde{\alpha}'} &=& e^{M_{1}}  = \tilde{\mu}',\nonumber
\end{eqnarray}	
where Equation \ref{eq: mme alpha lomax} is the applicable m.m.e for all Lomax models. Consider the remaining moments, $\tilde{\beta}'$, $\tilde{\gamma}'$ and $\tilde{\delta}'$ for each model separately:
\paragraph{Lomax($\eta' = 1$, $\gamma'$) model:}
Taking the quotient of Equation \ref{eq: Cov(X1X2) lomax} squared and the difference between Equation \ref{eq: Var(X2) lomax} and Equation \ref{eq: E(X2) lomax}, and equating the result to the solution obtained from identical operations on the associated sample moments, an implicit equation w.r.t $\tilde{\gamma}'$ is obtained, to be solved numerically:
\begin{eqnarray}
\frac{(\tilde{\alpha}')^2\left[{}_{1}F_{1}(\tilde{\gamma}') - \left(\frac{\tilde{\gamma}'}{\tilde{\gamma}'+1}\right){}_{1}F_{1}(\tilde{\gamma}'+1)  \right]^2}{\tilde{\mu}' {}_{2}F_{2}(\tilde{\gamma}') -  [{}_{1}F_{1}(\tilde{\gamma}')]^2 } & = \frac{{S_{12}}^2}{S_2 - M_2}, \label{eq: gamma lomax}
\end{eqnarray}
and to ensure the expression remains real-valued we obtain the condition:
\begin{align}
      S_2 &\neq M_2.\nonumber
\end{align}
Since $\forall x_1, x_2 \in {0, 1, 2, \dots}$ and $\tilde{\gamma}' > 0$, it is necessary to establish bounds for the quantity $\frac{{S_{12}}^2}{S_2 - M_2}$ to ensure that the condition $\tilde{\gamma}'> 0$ is satisfied.\\

Now, since the left-hand side of Equation \ref{eq: gamma lomax} is strictly increasing in $\tilde{\gamma}'>0$ (as proven in Theorem \ref{thm: f(gamma') strict inc}), we may take the limits $\lim_{\tilde{\gamma}' \to 0^+}$ and $\lim_{\tilde{\gamma}' \to \infty}$ of the left-hand side of Equation \ref{eq: gamma lomax} to obtain its lower and upper bounds, respectively. Furthermore, by substituting using Equations \ref{eq: moment equiv 1 hyper}, \ref{eq: moment equiv 2 hyper}, and \ref{eq: moment equiv 3 hyper} into the left-hand side of Equation \ref{eq: gamma lomax}, we recover the left-hand side of Equation \ref{eq: nu exp}. Since we already established that taking the limits $\lim_{\tilde{\gamma} \to \infty}$ and $\lim_{\tilde{\gamma} \to 0^+}$ in the exponential model is equivalent to taking $\lim_{\tilde{\gamma}' \to 0^+}$ and $\lim_{\tilde{\gamma}' \to \infty}$ in the Lomax model, it is evident that we obtain the same lower and upper bounds for the left-hand side of Equation \ref{eq: nu exp}, as given in Equation \ref{eq: full bounds 2} (we show $\lim_{\gamma' \to 0^+}$ of the left-side expression in Equation \ref{eq: gamma lomax} in in Theorem \ref{thm: limit gamma lomax} regardless). Thus, the bounds for the quantity $\frac{S_{12}^2}{S_2 - M_2}$ are as follows (equivalent to Equation \ref{eq: full bounds 2}):

\begin{eqnarray}
	\frac{M_1^2}{e^{M_1} - 1}&<& \frac{{S_{12}}^2}{{S_2 - M_2}}   <  M_1 \label{eq: full bounds 2 lomax}.
\end{eqnarray}
Figure \ref{fig: gamma mme lomax} illustrates $\tilde{\gamma}'$ as a function of $\tilde{\alpha} =M_1$ and $\frac{S_{12}^2}{S_2 - M_2}$. Importantly, due to the bounds imposed by $M_1$ on $\frac{S_{12}^2}{S_2 - M_2}$ in Equation \ref{eq: full bounds 2 lomax}, certain combinations of $M_1$, $S_{2}$, $M_{2}$ and $S_{12}$ values may lead to the non-existence of $\tilde{\gamma}'$. This behavior is also evident from the patterns observed in Figure \ref{fig: gamma mme lomax}. 
\begin{figure}[H]
    \centering
    \includegraphics[width=0.5\linewidth]{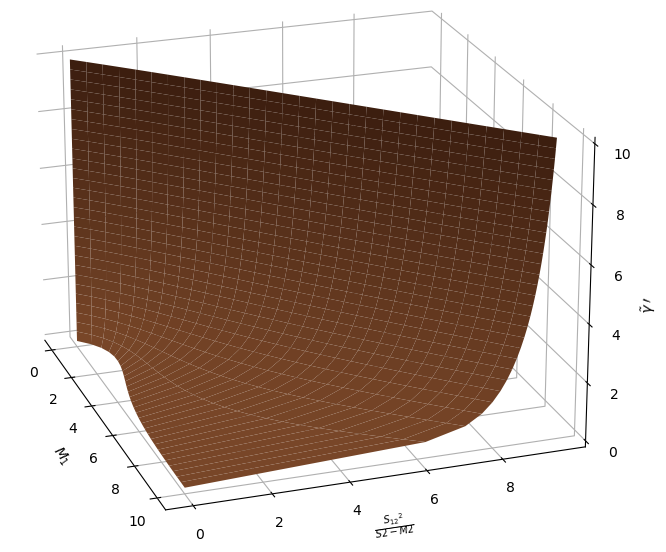}
    \caption{$\tilde{\gamma}'$ vs $M_1$ and $\frac{S_{12}^2}{S_2 - M_2}$ as per Equation \ref{eq: gamma lomax}.}
    \label{fig: gamma mme lomax}
\end{figure}
We then solve for $\tilde{\beta}'$ by equating Equation \ref{eq: Cov(X1X2) lomax} to the sample covariance to obtain:
\begin{equation}
     \tilde{\beta}' = \frac{S_{12}}{ \frac{\tilde{\alpha}'}{\tilde{\mu}'}\left( {}_{1}F_{1}(\tilde{\gamma}') - \left(\frac{\tilde{\gamma}'}{\tilde{\gamma}'+1}\right) {}_{1}F_{1}(\tilde{\gamma}'+1) \right)}.\nonumber
\end{equation}
Finally, solve for $\tilde{\delta}'$ by merely equating Equation \ref{eq: E(X2) lomax} to the sample mean of $X_2$:
\begin{eqnarray}
	\tilde{\delta}' &=& M_2 - \tilde{\beta}'\left(1 -  \frac{1}{\tilde{\mu}'}{}_{1}F_{1}(\tilde{\gamma}')\right).
	\nonumber
\end{eqnarray}
We note that the m.m.e's of Case 3: $\delta' = 0$, may be derived analogously to the approach previously outlined, with the natural exception of excluding the estimator for $\delta'$. 

\paragraph{Case I ($\beta' = 1$ or $\beta' = -1$):}
With regards to solving for $\tilde{\gamma}'$, one may either solve Equation~\ref{eq: gamma lomax}, or alternatively, equate the model-based expression for ${Cov}(X_1, X_2)$ (Equation \ref{eq: Cov(X1X2) exp}) with the sample covariance. This yields:
\begin{equation}
    \text{sign}(\beta')\frac{\tilde{\alpha}'}{\tilde{\mu}'} \left({}_{1}F_{1}(\tilde{\gamma}') -\left(\frac{\tilde{\gamma}'}{\tilde{\gamma}'+1}\right){}_{1}F_{1}(\tilde{\gamma}'+1) \right) = S_{12}, \label{eq: mme gamma case1 lomax}
\end{equation}
or equivalently:
\begin{equation}
    \text{sign}(\beta')\frac{\tilde{\alpha}'}{\tilde{\mu}'} \sum_{i = 0}^\infty\frac{\tilde{\gamma}'}{(\tilde{\gamma}'+i)(\tilde{\gamma}'+1+i)} \frac{(\tilde{\alpha}')^i}{i!}= S_{12}. \nonumber
\end{equation}
Since $\forall x_{1}, x_{2} \in \{0, 1, 2...\}$,  $\tilde{\gamma}' > 0$, it follows that bounds must be established for the quantity $S_{12}$ in order to guarantee the condition $\tilde{\gamma}' > 0$ is satisfied. Since the left-hand side expression of Equation \ref{eq: mme gamma case1 lomax} is not monotonic with respect to $\tilde{\gamma}'$, it is not possible to determine bounds for $S_{12}$ by using limits to guarantee $\tilde{\gamma}'>0$, as was done previously. Figure \ref{fig: mme gamma lomax case 1} illustrates the relationship between $\tilde{\gamma}'$, $\tilde{\alpha}' = M_1$, and $\text{sign}(\beta')S_{12}$. Importantly, due to the inability to establish definitive bounds on $\text{sign}(\beta')S_{12}$ ensuring a positive m.m.e. for $\gamma'$, certain combinations of $M_1$ and $S_{12}$ values may lead to the non-existence of $\tilde{\gamma}'$. This behavior is also evident from the patterns observed in Figure \ref{fig: mme gamma lomax case 1}.
\begin{figure}[H]
    \centering
    \includegraphics[width=0.5\linewidth]{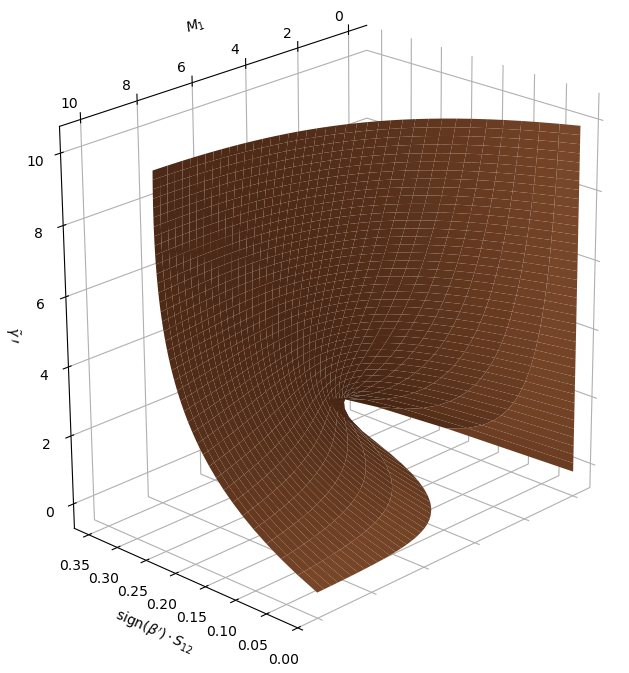}
    \caption{$\tilde{\gamma}'$ vs $M_1$ and $\text{sign}(\beta')$ as per Equation \ref{eq: mme gamma case1 lomax}.}
    \label{fig: mme gamma lomax case 1}
\end{figure}
 
Furthermore, we solve for solve for $\tilde{\delta}'$ just as before, by merely equating Equation \ref{eq: E(X2) lomax} to the sample mean of $X_2$:
\begin{eqnarray}
	\tilde{\delta}' &=& M_2 - \text{sign}(\beta') \left(1 - \frac{1}{\tilde{\mu}' }{}_{1}F_{1}(\tilde{\gamma}')\right).
	\nonumber
\end{eqnarray} 
\paragraph{Case II ($\gamma' = 1$):}
Since $\gamma' =1$, we have ${}_{1}F_{1}(\tilde{\gamma}' = 1) = \frac{1}{\tilde{\alpha}'}\left(\tilde{\mu}' - 1 \right)$ and ${}_{1}F_{1}(\tilde{\gamma}+1 =2) = \frac{1}{(\tilde{\alpha}')^2}\left(\tilde{\mu}'(\tilde{\alpha}'-1)+1\right)$. Now we equate the model-based expression for ${Cov}(X_1, X_2)$ (Equation \ref{eq: Cov(X1X2) lomax}) with the sample covariance to obtain:
\begin{eqnarray}
    \tilde{\beta}' = \frac{S_{12}} {\frac{1}{\tilde{\alpha}'\tilde{\mu}'}\left( \tilde{\mu}' - \tilde{\alpha}' -1\right)}.
\end{eqnarray}
Afterwhich, we solve for $\tilde{\delta}'$ equating Equation \ref{eq: E(X2) lomax} to the sample mean of $X_2$:
\begin{eqnarray}
	\tilde{\delta}' &=& M_2 - \tilde{\beta}'\left(1 - \frac{1}{\tilde{\alpha}'}\left(1 - \frac{1}{\tilde{\mu}'}  \right)\right) .
	\nonumber
\end{eqnarray}
\paragraph{Case IV ($\gamma' = 1$ and $\beta' = 1$ or $\gamma' = 1$ and $\beta' = -1$):}
We solve for $\tilde{\delta}'$ by merely equating Equation \ref{eq: E(X2) lomax} to the sample mean of $X_2$:
\begin{equation}
	\tilde{\delta}'  = M_2 - \text{sign}(\beta')\left(1 - \frac{1}{\tilde{\alpha}'}\left(1 - \frac{1}{\tilde{\mu}'}  \right)\right).
	\nonumber
\end{equation}
\paragraph{Case V ($\delta' = 0$ and $\gamma' = 1$):}
We solve for $\tilde{\beta}'$ by merely equating Equation \ref{eq: E(X2) lomax} to the sample mean of $X_2$:
\begin{equation}
	\tilde{\beta}'  = \frac{M_2}{1 - \frac{1}{\tilde{\mu}'}{}_{1}F_{1}(\tilde{\gamma}')}. \nonumber
	\nonumber
\end{equation}

\subsection{Correlations}
We note, as previously explained, the equivalence of the Lomax model with the exponential model, specifically that $\alpha = \alpha'$, $\beta = \beta'$, and $\delta = \delta'$ under the limit $\lim_{\gamma' \to 0^{+}}$, which corresponds to taking the limit $\lim_{\gamma \to \infty}$ in the exponential model. Consequently, the correlation bounds, for the full Lomax model, the Lomax($\eta' = 1$, $\gamma'$) model and its sub-models, Case I and Case III, coincide with those of their exponential model counterparts. Referring to Equations~\ref{eq: full rho bounds 1} and \ref{eq: full rho bounds 2} for the full exponential model, and Equations~\ref{eq: case1 rho bounds 1} and \ref{eq: case1 neg rho bounds 1} for Case I, we observe that the limit $\lim_{\gamma \to \infty}$ applied to the correlation function of the full exponential model in Equation~\ref{eq: rho exp} directly establishes the equivalence between the exponential and Lomax models. However, this equivalence does not extend to Cases II, IV and V of the exponential model, and therefore, the correlation bounds for the Lomax sub-models Case II, Case IV and Case V require separate derivation. Furthermore, we utilise the Lomax correlation function $ \rho\left(\alpha', \beta', \gamma', \delta', \eta' = 1 \right)$ given in Equation \ref{eq: rho lomax} in this section.

\subsection{Case II}
Since $\gamma' =1$, we have ${}_{1}F_{1}({\gamma}' = 1) = \frac{1}{{\alpha}'}\left({\mu}' - 1 \right)$ and ${}_{1}F_{1}({\gamma}'+1 =2) = \frac{1}{({\alpha}')^2}\left({\mu}'({\alpha}'-1)+1\right)$. Now consider:
\begin{align}
    {}_{2}F_{2}({\gamma}' = 1) & = \sum_{i = 0}^\infty \left(\frac{1}{1+i} \right)^2 \frac{{\alpha}'}{i!}\nonumber\\
    & = \frac{1}{\alpha'} \sum_{j = 1}^\infty \frac{1}{j} \frac{(\alpha')^i}{j!} \nonumber \\
    & = \frac{1}{\alpha'} \left(\text{Ei}(\alpha') - \gamma^* - \log\vert \alpha '\vert \right),  \nonumber
\end{align}
where $\text{Ei}(\alpha')$ is the exponential integral with $\gamma^*$ being the Euler-Mascheroni constant. Hence from Equation \ref{eq: rho lomax}, 
\begin{align}
    \rho\left(\alpha', \beta', \gamma' = 1, \delta', \eta' = 1 \right) &= \frac{\beta'\!\left(\mu'-1-\dfrac{(\alpha'-1)\mu'}{\alpha'}-\dfrac{1}{\alpha'}\right)}
{\sqrt{\alpha'\Biggl( \beta'\mu'\Bigl(\mu'-\dfrac{\mu'}{\alpha}+\dfrac{1}{\alpha}\Bigr)
+\delta' (\mu')^2
+\dfrac{(\beta')^{2}\Bigl(\mu'\bigl(-\log(\alpha')+\mathrm{Ei}(\alpha')-\gamma^*\bigr)-\dfrac{(1-\mu')^{2}}{\alpha'}\Bigr)}{\alpha'}\;\Biggr)}}. \nonumber
\end{align}
Now after taking the limit:
\begin{align}
    \lim_{\beta' \to \infty} \rho\left(\alpha', \beta', \gamma' = 1, \delta',\eta' = 1 \right) & =
\frac{\sqrt{\alpha'} \left(\mu' - 1 - \frac{(\alpha' - 1)\mu'}{\alpha'} - \frac{1}{\alpha'}\right)}
{\sqrt{A}},  \label{eq: case2 rho bounds 1 lomax}
\end{align}
and
\begin{align}
    \lim_{\beta' \to -\infty} \rho\left(\alpha', \beta', \gamma' = 1, \delta',\eta' = 1\right) & =
-\frac{\sqrt{\alpha'} \left(\mu' - 1 - \frac{(\alpha' - 1)\mu'}{\alpha'} - \frac{1}{\alpha'}\right)}
{\sqrt{A}}, \label{eq: case2 rho bounds 2 lomax}
\end{align}
with $A = - \alpha' \mu' \log\vert\alpha'\vert + \alpha' \mu' \operatorname{Ei}(\alpha') - \gamma^* \alpha' \mu' - (\mu')^2 + 2 \mu' - 1$.
Afterwhich, taking $\lim_{\alpha' \to 0^+}$ to Equations \ref{eq: case2 rho bounds 1 lomax} and \ref{eq: case2 rho bounds 2 lomax} implies $-1<\rho <1$.

\subsection{Case IV}
\paragraph{$\beta' =1$}
Again, taking the limit of Equation \ref{eq: rho lomax}, and noting for positive correlation (that is, $\beta' >0$) we must have $\delta' \geq 0$:
\begin{equation}
    \lim_{\delta' \to 0^+}\rho(\alpha',\beta' = 1, \gamma' = 1, \delta', \eta' = 1)  = \frac{\mu' - 1 + \frac{(1 - \alpha)\mu'}{\alpha} - \frac{1}{\alpha}}
{\sqrt{A+(\alpha'\mu')^2 - \alpha'(\mu')^2 + \alpha'\mu'}}.
    \label{eq: case4 rho bounds 1 lomax}
\end{equation}
Noting the monotonicity of Equation \ref{eq: case4 rho bounds 1 lomax}, we take $\lim_{\alpha' \to 0^+}$ to obtain $\frac{\sqrt{3}}{3} \approx 0.57735$. Hence, our bounds are $ 0<\rho < \frac{\sqrt{3}}{3} \approx 0.57735$ also illustrated in Figure \ref{fig: rho case 4 pos lomax} displaying $\rho(\alpha',\beta' = 1, \gamma' = 1, \delta', \eta' = 1)$.

\paragraph{$\beta =-1$}
Again, taking the limit of Equation \ref{eq: rho lomax}, and noting for negative correlation (that is, $\beta' <0$) we must have $\delta' \geq -\beta$:
\begin{equation}
    \lim_{\delta' \to 1^+}\rho(\alpha',\beta' = 1, \gamma' = 1, \delta', \eta' = 1)  = -\frac{ \sqrt{\alpha'}\left(\mu' - 1 - \frac{(1 - \alpha)\mu'}{\alpha} - \frac{1}{\alpha} \right)}
{\sqrt{A+ \alpha'(\mu')^2 - \alpha'\mu'}}  \label{eq: case4 neg rho bounds 1 lomax}
\end{equation}
Now Equation \ref{eq: case4 neg rho bounds 1 lomax} is not monotonic with respect to $\alpha$, so instead of taking limits, we rather minimise computationally: $(\alpha')^* = \operatorname*{argmin}_{\alpha'}  \left\{\lim_{\delta'\to 1}\rho(\alpha',\beta' = 1, \gamma' = 1, \delta', \eta' = 1) \right\} = 1.2345$, such that $\left\{\lim_{\delta'\to 1}\rho(\alpha',\beta' = 1, \gamma' = 1, \delta', \eta' = 1) \right\}_{ \rvert \alpha' = (\alpha')^*} \approx -0.31465$. Hence, our bounds are $ \approx -0.31465<\rho <0$ also illustrated in Figure \ref{fig: rho case 4 neg lomax} displaying $\rho(\alpha',\beta' = -1, \gamma' = 1, \delta', \eta' = 1)$.

\begin{figure}[H]
  \centering
  \begin{subfigure}{0.48\textwidth}
    \centering
    \includegraphics[width=\linewidth]{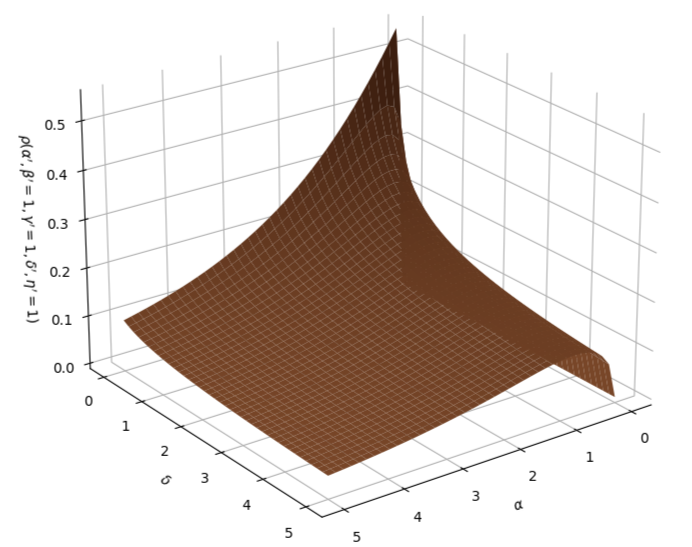}
    \caption{$\rho(\alpha',\beta' = 1, \gamma' = 1, \delta', \eta' = 1)$ against $\alpha'>0$ and $\delta' \geq 0$ noting a maximum achieved at $\frac{\sqrt{3}}{3}$.}
    \label{fig: rho case 4 pos lomax}
  \end{subfigure}
  \hfill
  \begin{subfigure}{0.48\textwidth}
    \centering
    \includegraphics[width=\linewidth]{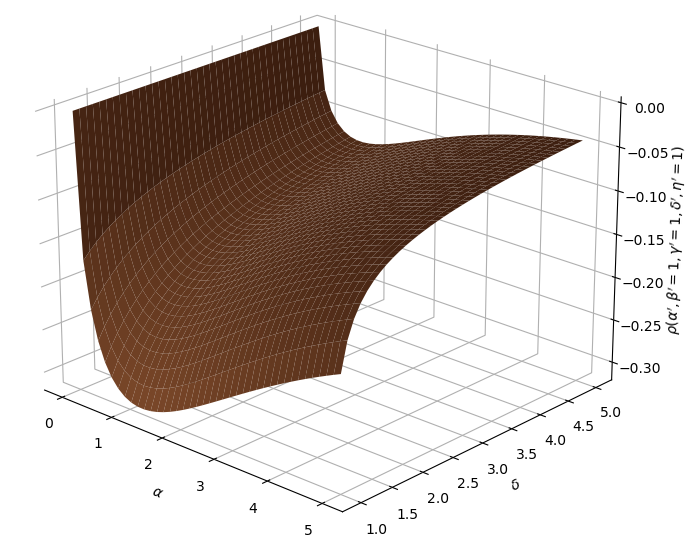}
    \caption{$\rho(\alpha',\beta' = -1, \gamma' = 1, \delta', \eta' = 1)$ against $\alpha' >0$, $\delta' \geq 1$ noting a minimum achieved at $\approx -0.314$.}
    \label{fig: rho case 4 neg lomax}
  \end{subfigure}
  \caption{$\rho(\alpha',\beta' = 1, \gamma' = 1, \delta',\eta' = 1)$ and $\rho(\alpha',\beta' = -1, \gamma' = 1, \delta', \eta' = 1)$.}
\end{figure}

\subsection{Case V}
Similarly, taking the limits of Equation \ref{eq: rho lomax}:
\begin{equation}
    \lim_{\beta'\to \infty}\lim_{\alpha'\to 0^+}\rho(\alpha',\beta', \gamma' = 1, \delta' = 0)  = 1, \nonumber
\end{equation}
and 
\begin{equation}
    \lim_{\beta'\to -\infty}\lim_{\alpha'\to 0^+}\rho(\alpha',\beta', \gamma' = 1, \delta' = 0)  = -1.\nonumber
\end{equation}
Hence, the bounds are $-1 < \rho < 1$.\\

We summarise the correlation $\rho(\alpha', \beta', \gamma', \delta', \eta')$ of Equation \ref{eq: rho lomax} bounds in Table \ref{table: rho bounds lomax}, confirmed with numerical optimisation.
\begin{table}[H]
    \centering
    \begin{tabular}{lcc}
        \hline
        Model & $\rho$ Lower Bound &  $\rho$ Upper Bound\\ \hline
        Full Lomax, Lomax($\eta' = 1$, $\gamma'$) (including Case III) & -1 & 1 \\ 
        Case I ($\beta' = 1$) & 0& $\frac{\sqrt{2}}{2} \approx 0.707$  \\
        Case I ($\beta' = -1$) & $\approx-0.482$ &  0 \\ 
        Case II ($\gamma' = 1$)& -1 & 1 \\
        Case IV ($\beta' = 1, \gamma' = 1$)  & 0 & $\frac{\sqrt{3}}{3} \approx 0.577$ \\ 
        Case IV ($\beta' = -1, \gamma' = 1$)  & $\approx -0.315$ & 0 \\
        Case V ($\delta' = 0, \gamma' = 1$)  & -1 & 1 \\
        \hline
    \end{tabular}
    \caption{$\rho$ lower and upper bounds for the full lomax model and sub-models.}
    \label{table: rho bounds lomax}
\end{table}

\subsection{Maximum likelihood estimation}
Consider again, the data in the form of $\textbf{X}^{(1)}, \textbf{X}^{(2)}, ..., \textbf{X}^{(n)}$ (where for each $i = 1, \ldots, n$, $\textbf{X}^{(i)} = (X_{1i}, X_{2i})^{T} )$ are obtained, which are independent and identically distributed according to Equations \ref{eq: lomax Function 1} - \ref{eq: lomax Function 2}, the the likelihood function is as follows from Equation \ref{eq:P(X1, X2) lomax}:
\begin{align}
    L(\alpha', \beta', \gamma', \delta', \eta') &= \prod_{i=1}^{n}  \frac{e^{-\alpha'}(\alpha')^{x_{1i}}}{x_{1i}!} \frac{e^{-\delta' - \beta' \left(1 -  \left(\frac{\gamma'}{x_{1i} +\gamma'}\right)^{\eta'}   \right)} \left\{\delta' + \beta' \left(1 - \left(\frac{\gamma'}{x_{1i} +\gamma'}\right)^{\eta'}\right) \right\}^{x_{2i}}       } {x_{2i}!} \nonumber \\
    &= \frac{e^{-n(\alpha' +\beta' + \delta')} (\alpha')^{\sum_{i= 1}^nx_{1i}} e^{\beta'\sum_{i = 1}^n \left(\frac{\gamma'}{x_{1i} + \gamma'} \right)^{\eta'}} \prod_{i = 1}^n  \left(\delta' + \beta' \left(1 - \left(\frac{\gamma'}{x_{1i} + \gamma'} \right)^{\eta'}\right)\right)^{x_{2i}} }{\prod_{i = 1}^n  \left( x_{1i}!x_{2i}!\right)}. \nonumber
\end{align}
Now the log-likelihood is:
\begin{align}
    l = \log \left(L(\alpha', \beta', \gamma', \delta', \eta') \right) = &-n(\alpha' +\beta' +\delta') + \log(\alpha') \sum_{i =1}^nx_{1i} + \beta' \sum_{i = 1}^n \left(\frac{\gamma'}{x_{1i} + \gamma'} \right)^{\eta'} \nonumber \\
    & + \sum_{i = 1}^n x_{2i}\log \left(\delta' + \beta' \left(1 - \left(\frac{\gamma'}{x_{1i} + \gamma'} \right)^{\eta'}\right) \right)
    +h\left(\mathbf{x}_1, \mathbf{x}_2 \right), \label{eq: log lik lomax}
\end{align}
where $h\left(\mathbf{x}_1, \mathbf{x}_2 \right) = -\log \left( \prod_{i = 1}^n  \left( x_{1i}!x_{2i}!\right)\right)$. Now since $\frac{\partial l}{\partial \alpha'} = 0 \implies \hat{\alpha}' = \frac{\sum_{i = 1}^nx_{1i}}{n} = \tilde{\alpha}'$, that is, the m.l.e of $\alpha'$ is equivalent to the m.m.e of $\alpha'$. The m.l.e's of $\beta', \gamma', \delta', \eta'$ are to be solved numerically using Equation \ref{eq: log lik lomax}.

\subsection{Likelihood ratio tests}
We note the natural parameter space of the full Lomax model is $\Theta = \{ (\alpha', \beta', \gamma', \delta',\eta' )^{T}: \alpha' > 0, \gamma' >  0, \beta' \in \mathbb{R} \setminus \{0\}, \delta' \geq \max(-\beta', 0) , \eta' > 0 \}$. Let $\hat{\alpha}', \hat{\beta}', \hat{\gamma}', \hat{\delta'}$ and $\hat{\eta}'$ be the m.l.e's of the full Lomax model. Furthermore, we denote $(\hat{\alpha}')^*, (\hat{\beta}')^*, (\hat{\gamma}')^*, (\hat{\delta}')^*$ and $(\hat{\eta}')^*$ as any m.l.e under $H_0$ in this section.

\subsubsection{Testing $\beta' = \pm 1$}
 Under $H_0$, the natural parameter space is $\Theta_0 = \{ (\alpha', \gamma', \delta', \eta' )^{T}: \alpha' > 0, \gamma' > 0, \delta' \geq \mathbb{I}\left(\beta' <0 \right), \eta' > 0 \}$. The generalized likelihood ratio test statistic in Equation \ref{eq: lik ratio test} is:

\begin{align}
    \Lambda = \frac{ \frac{e^{-n((\hat{\alpha}')^* +\text{sign}(\beta') + (\hat{\delta}')^*)} ((\hat{\alpha}')^*)^{\sum_{i= 1}^nx_{1i}} e^{\text{sign}(\beta')\sum_{i = 1}^n \left(\frac{(\hat{\gamma}')^*}{x_{1i} + (\hat{\gamma}')^*} \right)^{(\hat{\eta}')^*}} \prod_{i = 1}^n  \left((\hat{\delta}')^* + (\hat{\beta}')^* \left(1 - \left(\frac{(\hat{\gamma}')^*}{x_{1i} + (\hat{\gamma}')^*} \right)^{(\hat{\eta}')^*}\right)\right)^{x_{2i}} }{\prod_{i = 1}^n  \left( x_{1i}!x_{2i}!\right)}}{\frac{e^{-n(\hat{\alpha}' +\hat{\beta}' + \hat{\delta}')} (\hat{\alpha}')^{\sum_{i= 1}^nx_{1i}} e^{\hat{\beta}'\sum_{i = 1}^n \left(\frac{\hat{\gamma}'}{x_{1i} + \hat{\gamma}'} \right)^{\hat{\eta}'}} \prod_{i = 1}^n  \left(\hat{\delta}' + \hat{\beta}' \left(1 - \left(\frac{\hat{\gamma}'}{x_{1i} + \hat{\gamma}'} \right)^{\hat{\eta}'}\right)\right)^{x_{2i}} }{\prod_{i = 1}^n  \left( x_{1i}!x_{2i}!\right)}}. \nonumber
\end{align}
Since $(\hat{\alpha}')^* = \hat{\alpha}'$, we have:
\begin{eqnarray}
	\Lambda &=& e^{-n\left(\text{sign}(\beta') +(\hat{\delta}')^* -\hat{\beta}' - \hat{\delta}'  \right)}e^{\left({\text{sign}(\beta') \sum_{i = 1}^n  \left(\frac{(\hat{\gamma}')^*}{x_{1i} + (\hat{\gamma}')^*} \right)^{(\hat{\eta}')^*}
         } -\hat{\beta'}\sum_{i = 1}^n   \left(\frac{(\hat{\gamma}')}{x_{1i} +(\hat{\gamma}')} \right)^{\hat{\eta}'}
   \right)} \prod_{i = 1}^{n}  \left[ \frac{(\hat{\delta}')^* +  \text{sign}(\beta') \left(1 -   \left(\frac{(\hat{\gamma}')^*}{x_{1i} + (\hat{\gamma}')^*} \right)^{(\hat{\eta}')^*}
\right)} {\hat{\delta}' + \hat{\beta}' \left(1 -   \left(\frac{\hat{\gamma}'}{x_{1i} + \hat{\gamma}'} \right)^{\hat{\eta}'}
\right)} \right]^{x_{2i}}. \nonumber
\end{eqnarray}	
And by taking the logarithm, we obtain:
\begin{align}
	\log{\Lambda} = &-n\left(\text{sign}(\beta') + (\hat{\delta}')^* -\hat{\beta}' - \hat{\delta}'  \right) + {\text{sign}(\beta') \sum_{i = 1}^n  \left(\frac{(\hat{\gamma}')^*}{x_{1i} + (\hat{\gamma}')^*} \right)^{(\hat{\eta}')^*}
         } -\hat{\beta'}\sum_{i = 1}^n   \left(\frac{(\hat{\gamma}')}{x_{1i} +(\hat{\gamma}')} \right)^{\hat{\eta}'}
   \nonumber \\  &+ \sum_{i=1}^{n} x_{2i} \log{  \left[ \frac{(\hat{\delta}')^* +  \text{sign}(\beta') \left(1 -   \left(\frac{(\hat{\gamma}')^*}{x_{1i} + (\hat{\gamma}')^*} \right)^{(\hat{\eta}')^*}
\right)} {\hat{\delta}' + \hat{\beta}' \left(1 -   \left(\frac{\hat{\gamma}'}{x_{1i} + \hat{\gamma}'} \right)^{\hat{\eta}'}
\right)} \right]}. \nonumber
\end{align}	

\subsubsection{Testing $\gamma' =  1$}
Under $H_0$, the natural parameter space is $\Theta_0 = \{ (\alpha', \beta', \delta', \eta' )^{T}: \alpha' > 0, \beta' \in \mathbb{R} \setminus \{0\}, \delta' \geq \max(-\beta, 0), \eta'>0 \ \}$. The logarithm of the generalized likelihood ratio test statistic in Equation \ref{eq: lik ratio test} is:
\begin{align}
	\log{\Lambda} = &-n\left( (\hat{\beta}')^* + (\hat{\delta}')^* -\hat{\beta}' - \hat{\delta}'  \right) + (\hat{\beta}')^* \sum_{i = 1}^n  \left(\frac{1}{x_{1i} +1} \right)^{(\hat{\eta}')^*}
         -\hat{\beta'}\sum_{i = 1}^n   \left(\frac{(\hat{\gamma}')}{x_{1i} +(\hat{\gamma}')} \right)^{\hat{\eta}'}
   \nonumber \\  &+ \sum_{i=1}^{n} x_{2i} \log{  \left[ \frac{(\hat{\delta}')^* +  (\hat{\beta}')^* \left(1 -   \left(\frac{1}{x_{1i} + 1} \right)^{(\hat{\eta}')^*}
\right)} {\hat{\delta}' + \hat{\beta}' \left(1 -   \left(\frac{\hat{\gamma}'}{x_{1i} + \hat{\gamma}'} \right)^{\hat{\eta}'}
\right)} \right]}. \nonumber
\end{align}	

\subsubsection{Testing $\delta' =  0$}
Under $H_0$, the natural parameter space is $\Theta_0 = \{ (\alpha', \beta', \gamma', \eta' )^{T}: \alpha' > 0, \beta' \in \mathbb{R} \setminus \{0\}, \gamma' >0, \eta' > 0  \}$. The logarithm of the generalized likelihood ratio test statistic in Equation \ref{eq: lik ratio test} is:
\begin{align}
	\log{\Lambda} = &-n\left( (\hat{\beta}')^* -\hat{\beta}' - \hat{\delta}'  \right) + {(\hat{\beta}')^* \sum_{i = 1}^n  \left(\frac{(\hat{\gamma}')^*}{x_{1i} + (\hat{\gamma}')^*} \right)^{(\hat{\eta}')^*}
         } -\hat{\beta'}\sum_{i = 1}^n   \left(\frac{(\hat{\gamma}')}{x_{1i} +(\hat{\gamma}')} \right)^{\hat{\eta}'}
   \nonumber \\  &+ \sum_{i=1}^{n} x_{2i} \log{  \left[ \frac{ (\hat{\beta}')^*\left(1 -   \left(\frac{(\hat{\gamma}')^*}{x_{1i} + (\hat{\gamma}')^*} \right)^{(\hat{\eta}')^*}
\right)} {\hat{\delta}' + \hat{\beta}' \left(1 -   \left(\frac{\hat{\gamma}'}{x_{1i} + \hat{\gamma}'} \right)^{\hat{\eta}'}
\right)} \right]}. \nonumber
\end{align}	

\subsubsection{Testing $\beta' = \pm 1$ and $\gamma' = 1$}
Under $H_0$, the natural parameter space is $\Theta_0 = \{ (\alpha', \delta', \eta' )^{T}: \alpha' > 0, \delta' \geq \mathbb{I}\left(\beta' <0 \right), \eta' > 0\}$. The logarithm of the generalized likelihood ratio test statistic in Equation \ref{eq: lik ratio test} is:
\begin{align}
	\log{\Lambda} = &-n\left( \text{sign}(\beta')+ (\hat{\delta}')^*-\hat{\beta}' - \hat{\delta}'  \right) + \text{sign}(\beta')\sum_{i = 1}^n  \left(\frac{1}{x_{1i} +1} \right)^{(\hat{\eta}')^*}
         -\hat{\beta'}\sum_{i = 1}^n   \left(\frac{(\hat{\gamma}')}{x_{1i} +(\hat{\gamma}')} \right)^{\hat{\eta}'}
   \nonumber \\  &+ \sum_{i=1}^{n} x_{2i} \log{  \left[ \frac{(\hat{\delta}')^* +  \text{sign}(\beta') \left(1 -   \left(\frac{1}{x_{1i} + 1} \right)^{(\hat{\eta}')^*}
\right)} {\hat{\delta}' + \hat{\beta}' \left(1 -   \left(\frac{\hat{\gamma}'}{x_{1i} + \hat{\gamma}'} \right)^{\hat{\eta}'}
\right)} \right]}. \nonumber
\end{align}	

\subsubsection{Testing $\delta' =  0$ and $\gamma' = 1$}
Under $H_0$, the natural parameter space is $\Theta_0 = \{ (\alpha', \beta', \eta' )^{T}: \alpha' > 0, \beta' \in \mathbb{R} \setminus \{0\}, \eta' > 0  \}$. The logarithm of the generalized likelihood ratio test statistic in Equation \ref{eq: lik ratio test} is:
\begin{align}
	\log{\Lambda} = &-n\left( (\hat{\beta}')^* -\hat{\beta}' - \hat{\delta}'  \right) + {(\hat{\beta}')^* \sum_{i = 1}^n  \left(\frac{1}{x_{1i} + 1} \right)^{(\hat{\eta}')^*}
         } -\hat{\beta'}\sum_{i = 1}^n   \left(\frac{(\hat{\gamma}')}{x_{1i} +(\hat{\gamma}')} \right)^{\hat{\eta}'}
   \nonumber \\  &+ \sum_{i=1}^{n} x_{2i} \log{  \left[ \frac{ (\hat{\beta}')^*\left(1 -   \left(\frac{1}{x_{1i} + 1} \right)^{(\hat{\eta}')^*}
\right)} {\hat{\delta}' + \hat{\beta}' \left(1 -   \left(\frac{\hat{\gamma}'}{x_{1i} + \hat{\gamma}'} \right)^{\hat{\eta}'}
\right)} \right]}. \nonumber
\end{align}	

\subsubsection{Testing $\eta' =  1$}
Under $H_0$, the natural parameter space is $\Theta_0 = \{ (\alpha', \beta', \gamma', \delta' )^{T}: \alpha' > 0, \beta' \in \mathbb{R} \setminus \{0\}, \gamma' >0,  \delta' \geq \max(-\beta, 0) \}$. The logarithm of the generalized likelihood ratio test statistic in Equation \ref{eq: lik ratio test} is:
\begin{align}
	\log{\Lambda} = &-n\left( (\hat{\beta}')^*+ (\hat{\delta}')^* -\hat{\beta}' - \hat{\delta}'  \right) + {(\hat{\beta}')^* \sum_{i = 1}^n  \frac{(\hat{\gamma}')^*}{x_{1i} + (\hat{\gamma}')^*} 
         } -\hat{\beta'}\sum_{i = 1}^n   \left(\frac{(\hat{\gamma}')}{x_{1i} +(\hat{\gamma}')} \right)^{\hat{\eta}'}
   \nonumber \\  &+ \sum_{i=1}^{n} x_{2i} \log{  \left[ \frac{ (\hat{\delta}')^*+ (\hat{\beta}')^*\left(1 -  \frac{(\hat{\gamma}')^*}{x_{1i} + (\hat{\gamma}')^*} 
\right)} {\hat{\delta}' + \hat{\beta}' \left(1 -   \left(\frac{\hat{\gamma}'}{x_{1i} + \hat{\gamma}'} \right)^{\hat{\eta}'}
\right)} \right]}. \nonumber
\end{align}

\section{Simulation Data}
Simulating from the pseudo-models is straightforward because of their marginal-conditional structure. For example, for the full exponential model given $(\alpha,\beta,\gamma,\delta)$, we generate a bivariate sample $(x_1,x_2)$ as follows:
\begin{enumerate}[label=\emph{Step \Roman*:}]
  \item Simulate $x_1$ from $\text{Poisson}(\alpha)$.
  \item Simulate $x_2$ from $\text{Poisson}\bigl(\delta + \beta (1 - e^{\gamma x_1})\bigr)$.
\end{enumerate}

We also simulate data for the exponential Case I ($\rho <0$) sub-model by fixing $\beta=-1$. An analogous simulation procedure applies to the Lomax model and its sub-models. Furthermore, we simulate $10000$ data sets of size $n = 100, 1000$ and $10000$ for fixed $\alpha, \beta, \gamma$ and $\delta$ values. \\

For the full exponential model (Table~\ref{Table: Simulations for full exp model}), recall that the bounds implied by $M_1$ on $\tfrac{S_{12}^2}{S_2-M_2}$ in Equation~\ref{eq: full bounds 2} may lead to the non-existence of the m.m.e. $\tilde{\gamma}$ for certain combinations of $M_1$, $S_2$, $M_2$, and $S_{12}$ (see Figure~\ref{fig: nu mme full exp}). To avoid this issue, we select parameter values that satisfy these bounds, specifically: $\alpha = 5, \beta = -20, \gamma = 0.5$ and $\delta = 25$. An analogous construction applies to the Lomax$(\eta' = 1, \gamma')$ model (Table~\ref{Table: Simulations for Lomax model}), where existence of $\tilde{\gamma}'$ is governed by the bounds in Equation~\ref{eq: full bounds 2 lomax}. Similarly, recall for the exponential Case I ($\rho < 0$) sub-model (Table~\ref{Table: Simulations for exp case 1}), the existence of a real-valued solution for $\tilde{\nu}$ requires that $\frac{M_1}{S_{12}}\text{sign}(\beta) \geq e$ -  we therefore select parameter values to ensure this condition is met, setting them as: $\alpha = 1, \gamma = 0.3$ and $ \delta = 25$.\\

We omit simulation results for the remaining sub-models, as no bounds are required to guarantee the existence of their m.m.e's; consequently, their corresponding simulations are straightforward. Furthermore, recall that no formal existence bounds for $\tilde{\gamma}'$ could be derived for the Lomax Case I sub-model; instead, Figure~\ref{fig: mme gamma lomax case 1} was provided solely to facilitate a visual illustration of the existence of $\tilde{\gamma}'$. Accordingly, simulations for the Lomax Case I sub-model are not reported here - in practical applications, the existence of $\tilde{\gamma}'$ would depend exclusively on the convergence of the optimisation routine employed.\\

The corresponding m.m.e and m.l.e results are reported in Tables~\ref{Table: Simulations for full exp model} and \ref{Table: Simulations for exp case 1} for the full exponential model and exponential Case I sub-model, and Table \ref{Table: Simulations for Lomax model} for the  Lomax$(\eta' = 1, \gamma')$ model. Included are $95\%$ confidence intervals computed as $\hat{\theta} \pm Z_{\alpha/2}\text{S.E.}(\hat{\theta})$\footnote{$Z_{\alpha/2}$ denotes the $100(1 - \alpha/2)$\textsuperscript{th} percentile of the standard normal distribution.}, where $\hat{\theta}$ is the point estimator (as done in \citet{arnold2021statistical}). As anticipated, the standard errors of both the m.m.e. and m.l.e. decrease with increasing sample size $n$, resulting in correspondingly narrower $95\%$ confidence intervals. For the full exponential model (Table~\ref{Table: Simulations for full exp model}) and the Lomax$(\eta' = 1, \gamma')$ model (Table~\ref{Table: Simulations for Lomax model}), the m.l.e.'s consistently outperform the m.m.e.'s, yielding smaller standard errors and tighter confidence intervals. In contrast, for the exponential Case I sub-model ($\rho<0$), the reverse pattern is observed, with the m.m.e.'s demonstrating greater accuracy. A salient feature in Table~\ref{Table: Simulations for exp case 1} is the relative instability of the estimators of $\gamma$: reliable estimation of $\gamma$ appears to require substantially larger sample sizes. Finally, we note that the Pearson correlation (PC) converges to the population correlation as $n$ increases.

\begin{table}[H] \centering 
	\begin{tabular}{@{\extracolsep{1pt}} ccccccccc} 
		\\[-1.8ex]\hline 
		\hline \\[-1.8ex] 
		n & Parameter & Moment & MLE & SE(Moment) & SE(MLE) & $95\%$CI (MM) & $95\%$CI (MM) & PC \\ 
		\hline \\[-1.8ex] 
		& $\alpha$ & 5.000 & 5.000 & 0.217 & 0.217 & $(4.574,\,5.426)$ & $(4.574,\,5.426)$ & \\
& $\beta$  & -19.493 & -20.269 & 4.533 & 3.512 & $(-28.378,\,-10.607)$ & $(-27.153,\,-13.385)$ & \\
100 & $\gamma$ & 0.474 & 0.504 & 0.164 & 0.102 & $(0.154,\,0.795)$ & $(0.304,\,0.703)$ & -0.601 \\
& $\delta$ & 23.929 & 25.160 & 5.078 & 3.827 & $(13.977,\,33.882)$ & $(17.660,\,32.661)$ & \\
& $\rho$   & -0.596 & -0.597 & 0.075 & 0.044 & $(-0.743,\,-0.449)$ & $(-0.683,\,-0.511)$ & 
\\[1ex]
\hline 
& $\alpha$ &5.000& 5.000 & 0.069 & 0.069 & $(4.864,\,5.134)$ & $(4.864,\,5.134)$ & \\
& $\beta$  & -19.872 & -20.010 & 1.638 & 1.082 & $(-23.083,\,-16.660)$ & $(-22.131,\,-17.889)$ & \\
1000 & $\gamma$ & 0.496 & 0.500 & 0.054 & 0.032 & $(0.391,\,0.601)$ & $(0.437,\,0.563)$ & -0.599 \\
& $\delta$ & 24.853 & 25.002 & 1.768 & 1.165 & $(21.388,\,28.319)$ & $(22.718,\,27.285)$ & \\
& $\rho$   & -0.598 & -0.598 & 0.024 & 0.014 & $(-0.646,\,-0.550)$ & $(-0.625,\,-0.571)$ &
\\[1ex]
\hline  
& $\alpha$ & 5.000 & 5.000 & 0.022 & 0.022 & $(4.957,\,5.042)$ & $(4.957,\,5.042)$ & \\
& $\beta$  & -19.996 & -20.002 & 0.523 & 0.348 & $(-21.022,\,-18.970)$ & $(-20.683,\,-19.320)$ & \\
10000& $\gamma$ & 0.500 & 0.500 & 0.017 & 0.010 & $(0.466,\,0.533)$ & $(0.480,\,0.520)$ & -0.598 \\
& $\delta$ & 24.990 & 25.003 & 0.563 & 0.375 & $(23.885,\,26.094)$ & $(24.268,\,25.739)$ & \\
& $\rho$   & -0.598 & -0.598 & 0.008 & 0.004 & $(-0.613,\,-0.583)$ & $(-0.606,\,-0.589)$ & \\
		\hline
	\end{tabular} 
    \caption{Simulations for full exponential model ($\alpha = 5$, $\beta = -20$,  $\gamma = 0.5$, $\delta = 25$).} 
    \label{Table: Simulations for full exp model} 
\end{table}

\begin{table}[H] \centering 
	\begin{tabular}{@{\extracolsep{1pt}} ccccccccc} 
		\\[-1.8ex]\hline 
		\hline \\[-1.8ex] 
		n & Parameter & Moment & MLE & SE(Moment) & SE(MLE) & $95\%$CI (MM) & $95\%$CI (MM) & PC \\ 
		\hline \\[-1.8ex] 
& $\alpha$ & 1.000 & 1.000 & 0.096 & 0.096 & $(0.812,\,1.188)$ & $(0.812,\,1.188)$ & \\
100 & $\gamma$ & 0.247 & 0.196 & 0.273 & 1.506 & $(-0.288,\,0.782)$ & $(-2.755,\,3.147)$ &  -0.038 \\
& $\delta$ & 24.997 & 25.054 & 0.504 & 0.544 & $(24.010,\,25.985)$ & $(23.989,\,26.119)$ & \\
& $\rho$   & -0.037 & -0.040 & 0.021 & 0.027 & $(-0.078,\,0.004)$ & $(-0.093,\,0.012)$ & 
\\[1ex]
\hline 
& $\alpha$ & 1.000 & 1.000 & 0.031 & 0.031 & $(0.940,\,1.060)$ & $(0.940,\,1.060)$ & \\
1000 & $\gamma$ & 0.265 & 0.214 & 0.240 & 0.282 & $(-0.206,\,0.736)$ & $(-0.338,\,0.767)$ & -0.038\\
& $\delta$ & 24.997 & 24.988 & 0.206 & 0.257 & $(24.593,\,25.402)$ & $(24.485,\,25.492)$ & \\
& $\rho$   & -0.039 & -0.037 & 0.019 & 0.023 & $(-0.077,\,-0.001)$ & $(-0.083,\,0.009)$ & 
\\[1ex]
\hline  
& $\alpha$ & 1.000 & 1.000 & 0.010 & 0.010 & $(0.981,\,1.019)$ & $(0.981,\,1.019)$ & \\
10000& $\gamma$ & 0.297 & 0.251 & 0.113 & 0.195 & $(0.076,\,0.518)$ & $(-0.131,\,0.633)$ & -0.040 \\
& $\delta$ & 25.002 & 24.975 & 0.082 & 0.154 & $(24.842,\,25.163)$ & $(24.672,\,25.277)$ & \\
& $\rho$   & -0.040 & -0.037 & 0.010 & 0.019 & $(-0.059,\,-0.021)$ & $(-0.074,\,-0.001)$ & \\
\hline
	\end{tabular} 
    \caption{Simulations for exponential Case I ($\rho <0$) ($\alpha = 1$, $\gamma = 0.3$, $\delta = 25$).} 
    \label{Table: Simulations for exp case 1} 
\end{table}

\begin{table}[H] \centering 
	\begin{tabular}{@{\extracolsep{1pt}} ccccccccc} 
		\\[-1.8ex]\hline 
		\hline \\[-1.8ex] 
		n & Parameter & Moment & MLE & SE(Moment) & SE(MLE) & $95\%$CI (MM) & $95\%$CI (MM) & PC \\ 
		\hline \\[-1.8ex] 
        & $\alpha'$ & 4.990 & 4.990 & 0.217 & 0.217 & $(4.565,\,5.415)$ & $(4.565,\,5.415)$ & \\
& $\beta'$  & -19.231 & -19.379 & 8.210 & 5.874 & $(-35.323,\,-3.139)$ & $(-30.892,\,-7.865)$ & \\
100 & $\gamma'$ & 0.491 & 0.428 & 0.355 & 0.292 & $(-0.205,\,1.186)$ & $(-0.144,\,1.000)$ &  -0.379\\
& $\delta'$ & 24.133 & 24.263 & 8.235 & 6.456 & $(7.991,\,40.274)$ & $(11.610,\,36.917)$ & \\
& $\rho$   & -0.377 & -0.367 & 0.095 & 0.065 & $(-0.564,\,-0.191)$ & $(-0.495,\,-0.240)$ & 
\\[1ex]
\hline 
& $\alpha'$ & 4.999 & 4.999 & 0.069 & 0.069 & $(4.864,\,5.134)$ & $(4.865,\,5.133)$ & \\
& $\beta'$  & -19.633 & -19.987 & 3.546 & 1.882 & $(-26.582,\,-12.683)$ & $(-23.676,\,-16.299)$ & \\
1000 & $\gamma'$ & 0.507 & 0.497 & 0.135 & 0.083 & $(0.242,\,0.772)$ & $(0.335,\,0.660)$ & -0.372\\
& $\delta'$ & 24.623 & 24.995 & 3.552 & 1.916 & $(17.661,\,31.585)$ & $(21.239,\,28.752)$ & \\
& $\rho$   & -0.371 & -0.371 & 0.030 & 0.021 & $(-0.431,\,-0.312)$ & $(-0.412,\,-0.330)$ &
\\[1ex]
\hline 
& $\alpha'$ & 5.000 & 5.000 & 0.022 & 0.022 & $(4.957,\,5.042)$ & $(4.957,\,5.042)$ & \\
& $\beta'$  & -19.987 & -20.003 & 1.107 & 0.576 & $(-22.158,\,-17.816)$ & $(-21.131,\,-18.875)$ & \\
10000 & $\gamma'$ & 0.500 & 0.500 & 0.040 & 0.026 & $(0.421,\,0.580)$ & $(0.449,\,0.551)$ & -0.372\\
& $\delta'$ & 24.998 & 25.001 & 1.108 & 0.585 & $(22.826,\,27.171)$ & $(23.854,\,26.148)$ & \\
& $\rho$   & -0.372 & -0.371 & 0.010 & 0.007 & $(-0.391,\,-0.353)$ & $(-0.385,\,-0.358)$ & \\
		\hline
	\end{tabular} 
    \caption{Simulations for Lomax($\eta' = 1, \gamma'$) model ($\alpha' = 5$, $\beta' = -20$,  $\gamma' = 0.5$, $\delta' = 25$).} 
    \label{Table: Simulations for Lomax model} 
\end{table} 

\section{Application: Data set I}
Considering the data set that is in \citet{leiter1973some}, where the data is a $50$-mile stretch of Interstate $95$ in
Prince William, Stafford and Spottsylvania counties in Eastern Virginia. The data is categorized as fatal accidents, injury accidents or property damage accidents, together with corresponding number of fatalities and injuries for period 1 January 1969 to 31 October 1970. Number of fatalities was considered as $X_1$ since the Fisher Dispersion Index is $1.052$ and Number of injury accidents as $X_2$ (Fisher Dispersion Index is $1.14$) with $n = 639$. The Pearson Correlation Coefficient for the data is $0.205$ - indicating that $\beta$ or $\beta'$ should be set to $1$ in the Case I sub-models.\\

We observe that the bounds specified in Equations \ref{eq: full bounds 2} and \ref{eq: full bounds 2 lomax} are not satisfied for this dataset. Consequently, the m.m.e's, particularly $\tilde{\gamma}$ and $\tilde{\gamma}'$, for both the full exponential model and the Lomax$(\eta' = 1, \gamma')$ model, respectively, fail to exist, with the exception of $\tilde{\alpha}$ and $\tilde{\alpha}'$. Moreover, since $\frac{M_1}{S_{12}} < e$, the m.m.e's for the exponential Case I sub-model, specifically $\tilde{\gamma}$, do not exist. For the Lomax Case I sub-model, no explicit bounds governing the existence of m.m.e's (notably $\tilde{\gamma}'$) were derived; only Figure \ref{fig: mme gamma lomax case 1} provides a graphical reference. Therefore, the existence of $\tilde{\gamma}'$ in this case is determined solely by the convergence of the optimisation routine. Finally, as established earlier, the full Lomax model lacks sufficient number of moment equations to derive m.m.e's, explaining the absence of said estimators for this model.\\

Additionally, recall that for the Case III sub-models (no intercept), the condition \( x_{1}=0 \) implies \( x_{2}=0 \) with probability one. Consequently, any empirical observation satisfying \( x_{1}=0 \) but \( x_{2}>0 \) is assigned zero probability under the model, leading to a likelihood of zero and thus signaling model misspecification for such observations. This result implies that both Case~III sub-models, as well as the Case~V sub-models (no intercept), are unsuitable for application in this setting.\\

Furthermore, the \(\rho\) estimate based on the m.l.e.'s of the full Lomax model is excluded, as no computational methods exist for evaluating the correlation in Equation~\ref{eq: rho lomax} when the discreteness assumption on \(\eta'\) is relaxed. In such cases, one must approximate infinite sums without the simplifying closed-form representations provided by hypergeometric functions.\\

The m.m.e.'s, m.l.e.'s, AIC values, and the corresponding \(-2 \log \Lambda\) statistics (where applicable) for the various models are reported in Table~\ref{table: traffic exp} and Table~\ref{table: traffic lomax} for the exponential and Lomax models, respectively. For the Case~I and Case~II exponential sub-models, the \(-2 \log \Lambda\) values did not exceed \(\chi^2_{1,0.05} = 3.84\). Similarly, the Case~IV exponential sub-model's \(-2 \log \Lambda\) value did not exceed \(\chi^2_{2,0.05} = 5.99\). These results indicate insufficient evidence to reject the null hypothesis of no difference between the respective exponential sub-models and the full exponential model. \\ 

An analogous conclusion holds for the Lomax sub-models: the \(-2 \log \Lambda\) value for the Lomax\((\eta' = 1, \gamma')\) model is compared against \(\chi^2_{1,0.05} = 3.84\); for the Case~I and Case~II Lomax sub-models, the comparisons are against \(\chi^2_{2,0.05} = 5.99\); and for the Case~IV Lomax sub-model, the comparison is against \(\chi^2_{3,0.05} = 7.815\). In all cases, the corresponding \(-2 \log \Lambda\) values fail to exceed the critical thresholds, implying no significant difference between the full model and the sub-models.\\

The best-fitting models, according to the AIC criterion, are the exponential Case~IV sub-model and the Lomax Case~II sub-model. However, neither of these models outperforms \citet{arnold2021statistical}'s Bivariate Pseudo-Poisson Sub-Model~I (A\&M BPP SM-I) - a two-parameter model - with an AIC value of \(1866.094\). Figure~\ref{fig: traffic non-mirrored} depicts the regression functions \(E(X_2 \mid X_1 = x_1)\) against $X_1 = x_1$ for the aforementioned sub-models, as well as for A\&M BPP SM-I.

\begin{table}[H]
    \centering
    \begin{tabular}{cccccc}
    \hline
     Model & Parameter & m.m.e & m.l.e  & $-2 \log \Lambda$& AIC \\
     \hline
    Full exponential  &  $\alpha$ & 0.058 & 0.058 & - & 1869.831\\
              & $\beta$ & - & 1.240 & &\\
              & $\gamma$ & - & 1.256 & &\\
              & $\delta$ & - & 0.813 & &\\
              & $\rho$ &- & 0.215 & & \\
              \hline
    Case I ($\beta = 1$) & $\alpha$ & 0.058 & 0.058 & 0.04 & 1867.872\\
              & $\gamma$ & - & 2.175 &  &\\
              & $\delta$ & - & 0.813 & &\\
              & $\rho$ &- & 0.213 & & \\
              \hline
    Case II ($\gamma = 1$) & $\alpha$ & 0.058 & 0.058 & $\approx0$ & 1867.829\\
              & $\beta$ & 1.423 & 1.296 & &\\
              & $\delta$ & 0.811 & 0.813 & &\\
              & $\rho$ & 0.219 & 0.215 & & \\
              \hline
    Case IV ($\beta = 1 \ \& \ \gamma = 1$) & $\alpha$ & 0.058 & 0.058 & 1.437 & $\textbf{1867.268}$\\
              & $\delta$ & 0.826 & 0.821 & &\\
              & $\rho$ & 0.156 & 0.156 & & \\
              \hline
    \end{tabular}
    \caption{m.m.e's, m.l.e's, $-2 \log \Lambda$'s (where applicable) and AIC's for exponential model and sub-models.}
    \label{table: traffic exp}
\end{table}

\begin{table}[H]
    \centering
    \begin{tabular}{cccccc}
    \hline
     Model & Parameter & m.m.e & m.l.e  & $-2 \log \Lambda$& AIC \\
     \hline
    Full Lomax  &  $\alpha'$ & 0.058 & 0.058 & - & 1871.829\\
              & $\beta'$ & - & 1.775 & &\\
              & $\gamma'$ & - & 1.100 & &\\
              & $\delta'$ & - & 0.813 & &\\
              & $\eta'$ & - & 1.066 & &\\
              & $\rho$ &- & - & & \\
              \hline
 Lomax($\eta' = 1, \gamma'$)  &  $\alpha'$ & 0.058 & 0.058 & 0.004 & 1869.833\\
              & $\beta'$ & - & 1.533 & &\\
              & $\gamma'$ & - & 0.726 & &\\
              & $\delta'$ & - & 0.813 & &\\
              & $\rho$ &- & 0.215 & & \\
              \hline              
Case I ($\beta' = 1$) & $\alpha'$ & 0.058 & 0.058 & 0.066 & 1867.895 \\
              & $\gamma'$ & 0.091 & 0.121 &  &\\
              & $\delta'$ & 0.811 & 0.813 & &\\
              & $\rho$ & 0.219 & 0.214 & & \\
              \hline
Case II ($\gamma' = 1$) & $\alpha'$ & 0.058 & 0.058 & $\approx0$ & $\textbf{1867.829}$\\
              & $\beta'$ & 1.803 & 1.769 & &\\
              & $\delta'$ & 0.811 & 0.813 & &\\
              & $\rho$ & 0.219 & 0.215 & & \\
              \hline
    Case IV ($\beta' = 1 \ \& \ \gamma' = 1$) & $\alpha'$ & 0.058 & 0.058 & 3.581 & 1869.409\\
              & $\delta'$ & 0.834 & 0.826 & &\\
              & $\rho$ & 0.124 & 0.124 & & \\
              \hline
    \end{tabular}
    \caption{m.m.e's, m.l.e's, $-2 \log \Lambda$'s (where applicable) and AIC's for the Lomax model and sub-models.}
    \label{table: traffic lomax}
\end{table}

\begin{figure}[H]
    \centering
    \includegraphics[width=0.5\linewidth]{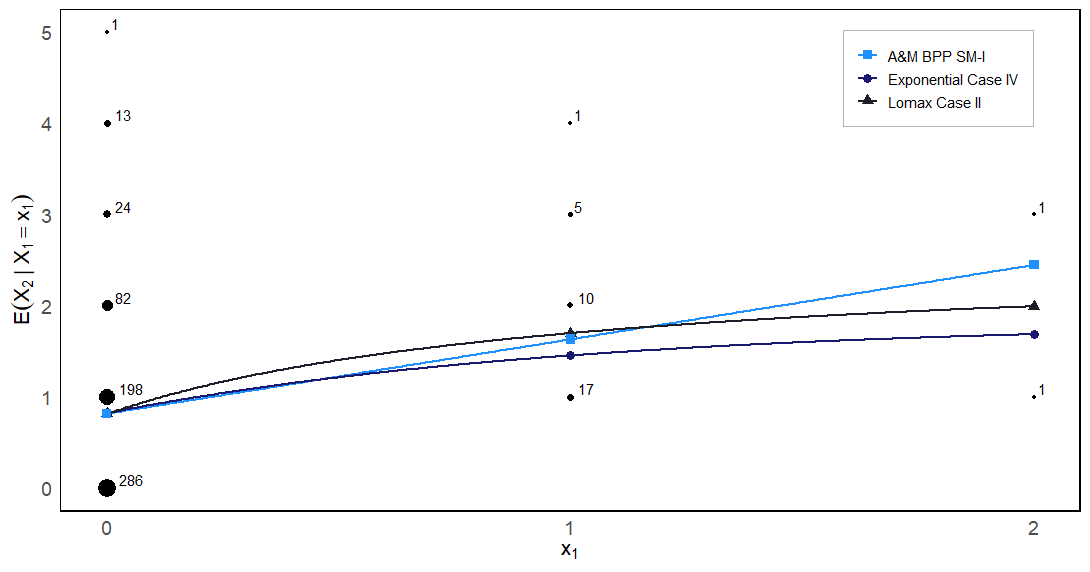}
    \caption{$E(X_2\mid X_1= x_1)$ vs. $X_1 = x_1$ for best fitted models using their m.l.e's. Size of black dots represent relative frequency of observation.}
    \label{fig: traffic non-mirrored}
\end{figure}

\subsection{Mirrored}
After assuming $X_1 \sim \text{Poisson}(\lambda_1)$ and $X_2\mid X_1 \sim \text{Poisson}(\lambda_2(x_1))$, it is natural to think $X_1$ influences $X_2$ in some way. However, we entertain the possibility that the data may be better modelled by the corresponding "mirrored"
model where $X_2 \sim \text{Poisson}(\lambda_1)$ and $X_1\mid X_2 \sim \text{Poisson}(\lambda_2(x_2))$, just as was undergone in \citet{arnold2021statistical}. Utilizing the mirrored models results in Number of Injury Accidents being $X_1$ (Fisher Dispersion Index of
$1.41$) and Number of Fatalities being $X_2$ (Fisher Dispersion Index of $1.052$).\\

We note again that, since the bounds in Equations~\ref{eq: full bounds 2} and \ref{eq: full bounds 2 lomax} are not satisfied for this mirrored dataset, the m.m.e.'s (particularly \(\tilde{\gamma}\) and \(\tilde{\gamma}'\)) for the mirrored full exponential model and the mirrored Lomax\((\eta' = 1, \gamma')\) model do not exist, with the exception of \(\tilde{\alpha}\) and \(\tilde{\alpha}'\). Because the m.m.e.'s for the mirrored Case~III sub-models are derived in the same manner, they are likewise omitted. Furthermore, we exclude the m.m.e.'s for the mirrored Case~IV sub-models, as these yield \(\tilde{\delta} < 0\) and \(\tilde{\delta}' < 0\), implying said sub-models are not appropriate for this dataset.\\

For the mirrored dataset, we observe that whenever \(X_1 = 0\), it follows that \(X_2 = 0\), and never \(X_2 > 0\). Consequently, it is expected that the models identify a no-intercept parameterization as appropriate, yielding estimates \(\tilde{\delta} = 0\) or \(\tilde{\delta}' = 0\), as consistently observed in Table~\ref{table: traffic mirror exp} and Table~\ref{table: traffic mirror lomax}.\\

The m.m.e.'s, m.l.e.'s, AIC values, and the corresponding \(-2 \log \Lambda\) statistics (where applicable) for the various mirrored models are presented in Table~\ref{table: traffic mirror exp} and Table~\ref{table: traffic mirror lomax}. For the mirrored Case~I, Case~II, and Case~III exponential sub-models, the \(-2 \log \Lambda\) values did not exceed \(\chi^2_{1,0.05} = 3.84\). Similarly, the \(-2 \log \Lambda\) value for the mirrored Case~V exponential sub-model did not exceed \(\chi^2_{2,0.05} = 5.99\). These results again provide insufficient evidence to reject the null hypothesis of no difference between said mirrored exponential sub-models and the mirrored full exponential model.\\  

An analogous conclusion applies to the mirrored Lomax sub-models: the \(-2 \log \Lambda\) statistic for the mirrored Lomax\((\eta' = 1, \gamma')\) model is compared to \(\chi^2_{1,0.05} = 3.84\); for the mirrored Case~I and Case~II Lomax sub-models, the comparisons are made against \(\chi^2_{2,0.05} = 5.99\); and for the mirrored Case~V Lomax sub-model, the comparison is made against \(\chi^2_{3,0.05} = 7.815\). In all instances, the \(-2 \log \Lambda\) values fail to exceed the respective critical values, indicating no significant difference between the mirrored full model and the mirrored sub-models.\\

We note, however, that both mirrored Case~IV sub-models differ significantly from their respective mirrored full models, as indicated by their \(-2 \log \Lambda\) values. This conclusion is further supported by the substantially higher AIC values observed for these mirrored sub-models relative to their corresponding mirrored full models.\\

Finally, we note that the mirrored exponential and mirrored Lomax Case~II, Case~III, and Case~V sub-models achieve lower AIC values than \citet{arnold2021statistical}'s Bivariate Pseudo-Poisson Mirrored Sub-Model~II (AIC = \(1847.505\)) - the best-fitting model reported by \citet{arnold2021statistical} for \citet{leiter1973some}'s accident and fatality dataset. Figure~\ref{fig: traffic mirrored} presents the regression forms \(E(X_2 \mid X_1 = x_1)\) for both the mirrored exponential and mirrored Lomax Case~V sub-models, as well as for A\&M BPP MSM-II. The mirrored exponential and mirrored Lomax sub-models exhibit superior fit, which we attribute to the greater curvature of their regression forms - illustrated in Figure~\ref{fig: traffic mirrored} - relative to \citet{arnold2021statistical}'s linear model.\\

Furthermore, the bivariate probability mass functions for the best-fitting models (both non-mirrored and mirrored) - computed using Equation~\ref{eq:P(X1, X2) exp} for the exponential model and Equation~\ref{eq:P(X1, X2) lomax} for the Lomax model - are presented in Figure~\ref{fig: hists traffic}.
 
\begin{table}[H]
    \centering
    \begin{tabular}{cccccc}
    \hline
     Model & Parameter & m.m.e & m.l.e  & $-2 \log \Lambda$& AIC \\
     \hline
    Full exponential  &  $\alpha$ & 0.862 & 0.862 & - & 1848.786\\
              & $\beta$ & - & 0.161 & &\\
              & $\gamma$ & - & 0.772 & &\\
              & $\delta$ & - & $\approx0$ & &\\
              & $\rho$ &- & 0.202 & & \\
              \hline
    Case I ($\beta = 1$) & $\alpha$ & 0.862 & 0.862 & 2.082 & 1848.868\\
              & $\gamma$ &  0.064 & 0.073 &  &\\
              & $\delta$ & 0.006 & $\approx0$ & &\\
              & $\rho$ & 0.221 & 0.245 & & \\
              \hline
    Case II ($\gamma = 1$) & $\alpha$ & 0.862 & 0.862 & 0.082 & 1846.868\\
              & $\beta$ & 0.159 & 0.143 & &\\
              & $\delta$ & $\approx0$ & $\approx0$ & &\\
              & $\rho$ & 0.218 & 0.194 & & \\
              \hline
Case III ($\delta = 0$) & $\alpha$ & 0.862 & 0.862 & 0.131 & 1846.918\\
              & $\beta$ & - & 0.134 & &\\
              & $\gamma$ & - & 1.105 & &\\
              & $\rho$ & - & 0.191 & & \\
              \hline
Case IV ($\beta= 1 \ \& \ \gamma = 1$) & $\alpha$ & 0.862 & 0.862 & 300.679 & 2145.465\\
              & $\delta$ &- & $\approx0$ & &\\
              & $\rho$ & - & 0.455 & & \\
              \hline
Case V ($\delta= 0 \ \& \ \gamma = 1$) & $\alpha$ & 0.862 & 0.862 & 0.056 & $\textbf{1844.842}$\\
              & $\beta$ & 0.138 & 0.143 & &\\
              & $\rho$ & 0.191 & 0.194 & & \\
              \hline
    \end{tabular}
    \caption{m.m.e's, m.l.e's, $-2 \log \Lambda$'s (where applicable) and AIC's for mirrored exponential model and sub-models.}
    \label{table: traffic mirror exp}
\end{table}

\begin{table}[H]
    \centering
    \begin{tabular}{cccccc}
    \hline
     Model & Parameter & m.m.e & m.l.e  & $-2 \log \Lambda$& AIC \\
     \hline
    Full Lomax  &  $\alpha'$ & 0.862 & 0.862 & - & 1850.802\\
              & $\beta'$ & - & 0.275 & &\\
              & $\gamma'$ & - & 0.897 & &\\
              & $\delta'$ & - & $\approx0$ & &\\
              & $\eta'$ & - & 0.510 & &\\
              & $\rho$ &- & - & & \\
              \hline
 Lomax($\eta' = 1, \gamma'$)  &  $\alpha'$ & 0.862 & 0.862 & $\approx 0$ & 1848.793\\
              & $\beta'$ & - & 0.217 & &\\
              & $\gamma'$ & - & 1.490 & &\\
              & $\delta'$ & - & $\approx0$ & &\\
              & $\rho$ &- & 0.202 & & \\
              \hline              
Case I ($\beta' = 1$) & $\alpha'$ & 0.862 & 0.862 & 1.545 & 1848.347 \\
              & $\gamma'$ & 14.497 & 12.765 &  &\\
              & $\delta'$ & 0.005 & $\approx0$ & &\\
              & $\rho$ & 0.219 & 0.240 & & \\
              \hline
Case II ($\gamma' = 1$) & $\alpha'$ & 0.862 & 0.862 & 0.066 & 1846.868\\
              & $\beta'$ & 0.203 & 0.181 & &\\
              & $\delta'$ & $\approx 0$ & $\approx0$ & &\\
              & $\rho$ & 0.218 & 0.194 & & \\
              \hline
Case III ($\delta' = 0$) & $\alpha'$ & 0.862 & 0.862 & $\approx 0$ & 1846.793\\
              & $\beta'$ & - & 0.217 & &\\
              & $\gamma'$ & - & 1.491 & &\\
              & $\rho$ & - & 0.202 & & \\
              \hline
Case IV ($\beta' = 1 \ \& \ \gamma' = 1$) & $\alpha'$ & 0.862 & 0.862 & 207.514 & 2052.317\\
              & $\delta'$ & - & $\approx0$ & &\\
              & $\rho$ & - & 0.414 & & \\
              \hline
Case V ($\delta' = 0 \ \& \ \gamma' = 1$) & $\alpha'$ & 0.862 & 0.862 & 0.065 & $\textbf{1844.868}$\\
              & $\beta'$ & 0.176 & 0.181 & &\\
              & $\rho$ & 0.190 & 0.194 & & \\
              \hline
    \end{tabular}
    \caption{m.m.e's, m.l.e's, $-2 \log \Lambda$'s (where applicable) and AIC's for the mirrored Lomax model and sub-models.}
    \label{table: traffic mirror lomax}
\end{table}

\begin{figure}[H]
    \centering
    \includegraphics[width=0.5\linewidth]{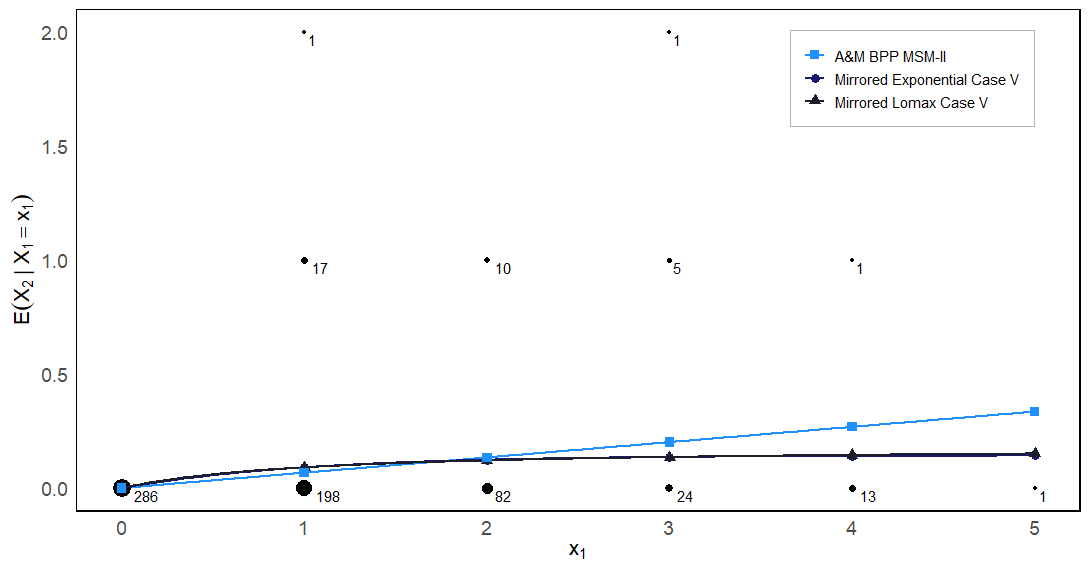}
    \caption{$E(X_2\mid X_1= x_1)$ vs. $X_1 = x_1$ for best fitted models using their m.l.e's. Size of black dots represent relative frequency of observation.}
    \label{fig: traffic mirrored}
\end{figure}

\begin{figure}[H]
    \centering
\animategraphics[controls,autoplay,loop,width= 0.5\textwidth]{1}{Hists/Traffic/}{1}{7}
\caption{Mass functions of best fitted models including Accident and Fatality Data.}
\label{fig: hists traffic}
\end{figure}

\section{Application: Data set II}
We consider the data set that is used in \citet{islam2017analysis}, where the source of the data is from the tenth
wave of Health and Retirement Study (HRS). Here, $X_1$ represents the number of conditions ever had which has a Fisher Dispersion Index of $0.801$, and $X_2$ represents utilization of healthcare
services which has a Fisher Dispersion Index of $1.03$. The Pearson Correlation Coeffcient for
the data is $0.063$.\\

The m.m.e.'s, m.l.e.'s, AIC values, and the corresponding \(-2 \log \Lambda\) statistics (where applicable) for the various models are reported in Table~\ref{table: health exp} and Table~\ref{table: health lomax}. Table~\ref{table: health exp} shows that all exponential sub-models differ significantly from the full exponential model, as evidenced by their respective \(-2 \log \Lambda\) values, with worsened AIC values confirming the inferior fit of these sub-models relative to the full model. A similar conclusion follows from Table~\ref{table: health lomax}, where all Lomax sub-models - except for the Lomax\((\eta' = 1, \gamma')\) and Lomax Case~III sub-models - exhibit \(-2 \log \Lambda\) values indicating significant differences from the full Lomax model.\\

For this dataset, we observe that whenever \(X_1 = 0\), it follows that \(X_2 = 0\), and never \(X_2 > 0\). Consequently, the models naturally identify a no-intercept parameterization as appropriate, yielding estimates \(\tilde{\delta} = 0\) or \(\tilde{\delta}' = 0\).\\

The best-fitting models, according to the AIC criterion, are the full exponential model and the Lomax Case~III sub-model, as shown in Figure~\ref{fig: health non-mirrored}. For comparison, \citet{arnold2021statistical}'s best-fitting model - the Bivariate Pseudo-Poisson Full Model (BPP FM) with an AIC value of \(32772.08\) - is also displayed. Notably, the full exponential model yields \(\hat{\gamma} \approx \infty\), while the Lomax Case~III sub-model yields \(\hat{\gamma}' \approx 0\). This result implies that the two models are nearly identical, consistent with the previously established theoretical equivalence: where \(\lim_{\hat{\gamma} \to \infty}\) in the exponential model is equivalent to  \(\lim_{\hat{\gamma}' \to 0}\) in the Lomax model. Theoretically it was established that such limits would produce independence models where \(X_2\) is no longer conditioned on \(X_1\), since \(X_1 \sim \text{Poisson}(\hat{\alpha})\) and \(X_2 \mid X_1 \sim \text{Poisson}(\hat{\delta} + \hat{\beta})\). However, this independence only arises when \(\hat{\gamma} \to \infty\) or \(\hat{\gamma}' \to 0\) exactly. Empirically, we actually computed \(\hat{\gamma} \approx 2\times 10^6\) and \(\hat{\gamma}' \approx 7\times 10^{-6}\). These values indicate that, at \(x_1 = 0\), the regression function is \(E(X_2 \mid X_1 = x_1) \approx\hat{\delta}\), rather than \(E(X_2 \mid X_1 = x_1) = \hat{\delta} + \hat{\beta}\) under the theoretical limits. Figure~\ref{fig: health non-mirrored} confirms this result, showing \(E(X_2 \mid X_1 = x_1) \approx \hat{\delta}\) (with \(\hat{\delta} = 0\)) for both the full exponential model and the Lomax Case~III sub-model when \(x_1 = 0\). For \(x_1 = 1, \ldots, 8\), the regression function appears constant with \(E(X_2 \mid X_1 = x_1) \approx \hat{\delta} + \hat{\beta}\), illustrating the models' ability to act as a linear model while correctly handling the \((x_1, x_2) = (0, 0)\) observations.\\

Furthermore, all models reported in Table~\ref{table: health exp} and Table~\ref{table: health lomax} outperform the Bivariate COM-Poisson (BCMP) model of \citet{arnold2021statistical}, which was their best-fitting model for the HRS dataset with an AIC value of \(32690.18\). We attribute this improvement to the exponential and Lomax models' capacity to accurately accommodate the \((x_1, x_2) = (0, 0)\) observations.

\begin{table}[H]
    \centering
    \begin{tabular}{cccccc}
    \hline
     Model & Parameter & m.m.e & m.l.e  & $-2 \log \Lambda$& AIC \\
     \hline
    Full exponential  &  $\alpha$ & 2.643 & 2.643 & - & $\textbf{32326.411}$ \\
              & $\beta$ & - & 0.813 & &\\
              & $\gamma$ & - & $\approx \infty$ & &\\
              & $\delta$ & - & $\approx0$ & &\\
              & $\rho$ &- & 0.105 & & \\
              \hline
    Case I ($\beta = 1$) & $\alpha$ & 2.643 & 2.643 & 136.976 & 32461.387 \\
              & $\gamma$ & 0.032 & 1.238 &  &\\
              & $\delta$ & 0.689 & $\approx0$ & &\\
              & $\rho$ & 0.054 & 0.185 & & \\
              \hline
    Case II ($\gamma = 1$) & $\alpha$ & 2.643 & 2.643 & 118.883 & 32443.294\\
              & $\beta$ & 0.258 & 0.924 & &\\
              & $\delta$ & 0.560 & $\approx0$ & &\\
              & $\rho$ & 0.057 & 0.199 & & \\
              \hline
    Case III ($\delta = 0$) & $\alpha$ & 2.643 & 2.643 & 95.285 & 32419.696\\
              & $\beta$ & - & 0.905 & &\\
              & $\gamma$ & - & 1.105 & &\\
              & $\rho$ & - & 0.187 & & \\
              \hline
    Case IV ($\beta = 1 \ \& \ \gamma = 1$) & $\alpha$ & 2.643 & 2.643 & 146.130 & 32468.541\\
              & $\delta$ & $\approx0$ & $\approx0$ & &\\
              & $\rho$ & 0.212 & 0.206 & & \\
              \hline
    Case V ($\delta = 0 \ \& \ \gamma = 1$) & $\alpha$ & 2.643 & 2.643 & 118.883 & 32441.294\\
              & $\beta$ & 0.947 & 0.924 & &\\
              & $\rho$ & 0.201 & 0.199 & & \\
              \hline
    \end{tabular}
    \caption{m.m.e's, m.l.e's, $-2 \log \Lambda$'s (where applicable) and AIC's for exponential model and sub-models.}
    \label{table: health exp}
\end{table}

\begin{table}[H]
    \centering
    \begin{tabular}{cccccc}
    \hline
     Model & Parameter & m.m.e & m.l.e  & $-2 \log \Lambda$& AIC \\
     \hline
    Full Lomax  &  $\alpha'$ & 2.643 & 2.643 & - & 32328.43 \\
              & $\beta'$ & - & 0.813 & &\\
              & $\gamma'$ & - & 0.002 & &\\
              & $\delta'$ & - & $\approx0$ & &\\
              & $\eta'$ & - & 24.324 & &\\
              & $\rho$ &- & - & & \\
              \hline
Lomax($\eta' = 1, \gamma')$  &  $\alpha'$ & 2.643 & 2.643 & $\approx 0$ & 32326.411 \\
              & $\beta'$ & - & 0.813 & &\\
              & $\gamma'$ & - & $\approx0$ & &\\
              & $\delta'$ & - & $\approx0$ & &\\
              & $\rho$ &- & 0.105 & & \\
              \hline
    Case I ($\beta = 1$) & $\alpha'$ & 2.643 & 2.643 & 53.581 & 32378.013 \\
              & $\gamma'$ & $\approx0$ & 0.385 &  &\\
              & $\delta'$ & $\approx 0$ & $\approx0$ & &\\
              & $\rho$ & 0.127 & 0.167 & & \\
              \hline
    Case II ($\gamma' = 1$) & $\alpha'$ & 2.643 & 2.643 & 146.594 & 32471.027 \\
              & $\beta'$ & 0.289 & 1.161 & &\\
              & $\delta'$ & 0.582 & $\approx0$ & &\\
              & $\rho$ & 0.057 & 0.222 & & \\
              \hline
    Case III ($\delta' = 0$) & $\alpha'$ & 2.643 & 2.643 & $\approx 0$ & $\textbf{32324.411}$ \\
              & $\beta'$ & - & 0.813 & &\\
              & $\gamma'$ & - & $\approx0$ & &\\
              & $\rho$ & - & 0.105 & & \\
              \hline
    Case IV ($\beta' = 1 \ \& \ \gamma' = 1$) & $\alpha'$ & 2.643 & 2.643 & 174.438 & 32496.871 \\
              & $\delta'$ & 0.121 & 0.088 & &\\
              & $\rho$ & 0.191 & 0.195 & & \\
              \hline
    Case V ($\delta' = 0 \ \& \ \gamma' = 1$) & $\alpha'$ & 2.643 & 2.643 & 146.594 & 32469.03\\
              & $\beta'$ & 1.186 & 1.161 & &\\
              & $\rho$ & 0.224 & 0.222 & & \\
              \hline
    \end{tabular}
    \caption{m.m.e's, m.l.e's, $-2 \log \Lambda$'s (where applicable) and AIC's for Lomax model and sub-models.}
    \label{table: health lomax}
\end{table}

\begin{figure}[H]
    \centering
    \includegraphics[width=0.5\linewidth]
    {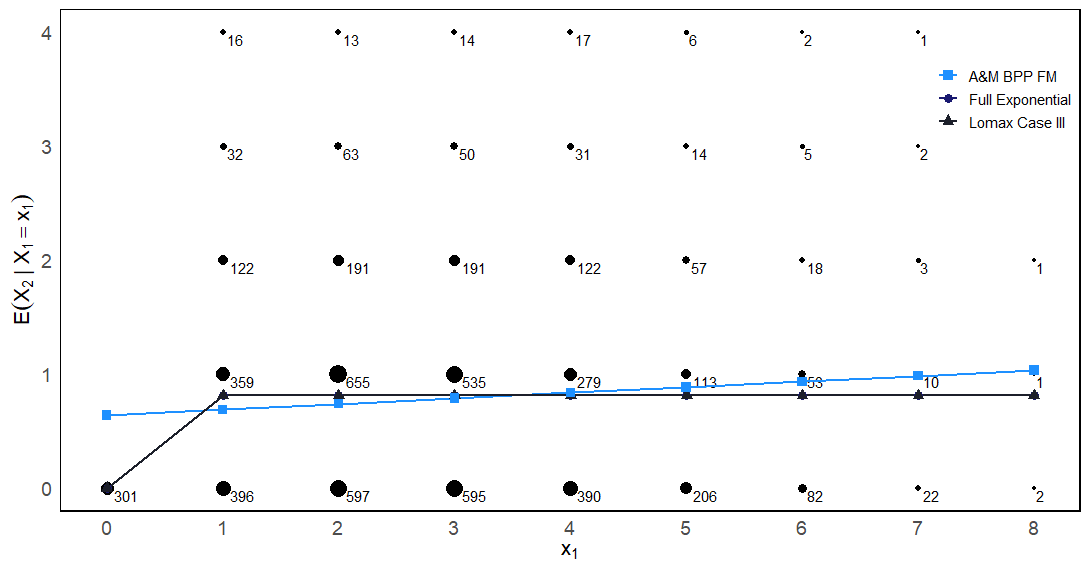}
    \caption{$E(X_2\mid X_1= x_1)$ vs. $X_1 = x_1$ for best fitted models using their m.l.e's. Size of black dots represent relative frequency of observation.}
    \label{fig: health non-mirrored}
\end{figure}

\subsection{Mirrored}
The m.m.e.'s, m.l.e.'s, AIC values, and the corresponding \(-2 \log \Lambda\) statistics (where applicable) for the various mirrored models are reported in Table~\ref{table: health mirror exp} and Table~\ref{table: health mirror lomax}. Figure~\ref{fig: health mirrored} presents the best-fitting models: the mirrored exponential Case~I sub-model and the mirrored Lomax Case~I sub-model, alongside \citet{arnold2021statistical}'s Bivariate Pseudo-Poisson Mirrored Full Model (BPP MFM) with an AIC value of \(32783.080\), which was the best-fitting model for the mirrored dataset in \citet{arnold2021statistical}. Figure~\ref{fig: health mirrored} further shows that this study's best-fitting models are nearly identical to \citet{arnold2021statistical}'s BPP MFM, a linear model, indicating that the exponential and Lomax models considered in this study can approximate linear behavior when appropriate. This, in turn, suggests that a linear specification provides an adequate fit for the mirrored dataset.\\

Additionally, the bivariate probability mass functions for the best-fitting models - computed using Equation~\ref{eq:P(X1, X2) exp} for the exponential model and Equation~\ref{eq:P(X1, X2) lomax} for the Lomax model - are presented for the HRS dataset in Figure~\ref{fig: hists traffic}.

\begin{table}[H]
    \centering
    \begin{tabular}{cccccc}
    \hline
     Model & Parameter & m.m.e & m.l.e  & $-2 \log \Lambda$& AIC \\
     \hline
    Full exponential  &  $\alpha$ & 0.769 & 0.769& - & 32784.617 \\
              & $\beta$ & - & 0.501 & &\\
              & $\gamma$ & - & 0.287  & &\\
              & $\delta$ & - & 2.555 & &\\
              & $\rho$ &- & 0.056 & & \\
              \hline
    Case I ($\beta = 1$) & $\alpha$ & 0.769 & 0.769 & 0.144 & $\textbf{32782.761}$ \\
              & $\gamma$ & 0.126 & 0.121 &  &\\
              & $\delta$ & 2.556 & 2.559 & &\\
              & $\rho$ & 0.058 & 0.056 & & \\
              \hline
    Case II ($\gamma = 1$) & $\alpha$ &0.769 & 0.769 & 1.227 & 32783.844 \\
              & $\beta$ & 0.271 & 0.240 & &\\
              & $\delta$ & 2.538 & 2.551 & &\\
              & $\rho$ & 0.057 & 0.050 & & \\
              \hline
    Case IV ($\beta = 1 \ \& \ \gamma = 1$) & $\alpha$ & 0.769 & 0.769 & 165.397 & 32946.015\\
              & $\delta$ & 2.258 & 2.298 & &\\
              & $\rho$ & 0.205 & 0.203 & & \\
              \hline
    \end{tabular}
    \caption{m.m.e's, m.l.e's, $-2 \log \Lambda$'s (where applicable) and AIC's for mirrored exponential model and sub-models.}
    \label{table: health mirror exp}
\end{table}

\begin{table}[H]
    \centering
    \begin{tabular}{cccccc}
    \hline
     Model & Parameter & m.m.e & m.l.e  & $-2 \log \Lambda$& AIC \\
     \hline
    Full Lomax  &  $\alpha'$ & 0.769 & 0.769& - & 32786.649 \\
              & $\beta'$ & - & 1.517 & &\\
              & $\gamma'$ & - & 3.746  & &\\
              & $\delta'$ & - & 2.556 & &\\
            & $\eta'$ & - & 0.361 & &\\
              & $\rho$ &- & - & & \\
              \hline
    Lomax($\eta' = 1, \gamma'$)  &  $\alpha'$ & 0.769 & 0.769& $\approx0$ & 32784.639 \\
              & $\beta'$ & - & 0.869 & &\\
              & $\gamma'$ & - & 5.980  & &\\
              & $\delta'$ & - & 2.556 & &\\
              & $\rho$ &- & 0.056 & & \\
              \hline
    
    Case I ($\beta' = 1$) & $\alpha'$ & 0.769 & 0.769 & $\approx0$ & $\textbf{32782.647}$ \\
              & $\gamma'$ & 7.052 & 7.181 &  &\\
              & $\delta'$ & 2.555 & 2.556 & &\\
              & $\rho$ & 0.057 & 0.056 & & \\
              \hline
    Case II ($\gamma' = 1$) & $\alpha'$ &0.769 & 0.769 & 1.319 & 32783.967 \\
              & $\beta'$ & 0.346 & 0.305 & &\\
              & $\delta'$ & 2.538 & 2.551 & &\\
              & $\rho$ & 0.057 & 0.050 & & \\
              \hline
    Case IV ($\beta' = 1 \ \& \ \gamma' = 1$) & $\alpha'$ & 0.769 & 0.769 & 86.175 & 32866.824\\
              & $\delta'$ & 2.340 & 2.364 & &\\
              & $\rho$ & 0.162 & 0.161 & & \\
              \hline
    \end{tabular}
    \caption{m.m.e's, m.l.e's, $-2 \log \Lambda$'s (where applicable) and AIC's for mirrored Lomax model and sub-models.}
    \label{table: health mirror lomax}
\end{table}

\begin{figure}[H]
    \centering
    \includegraphics[width=0.5\linewidth]{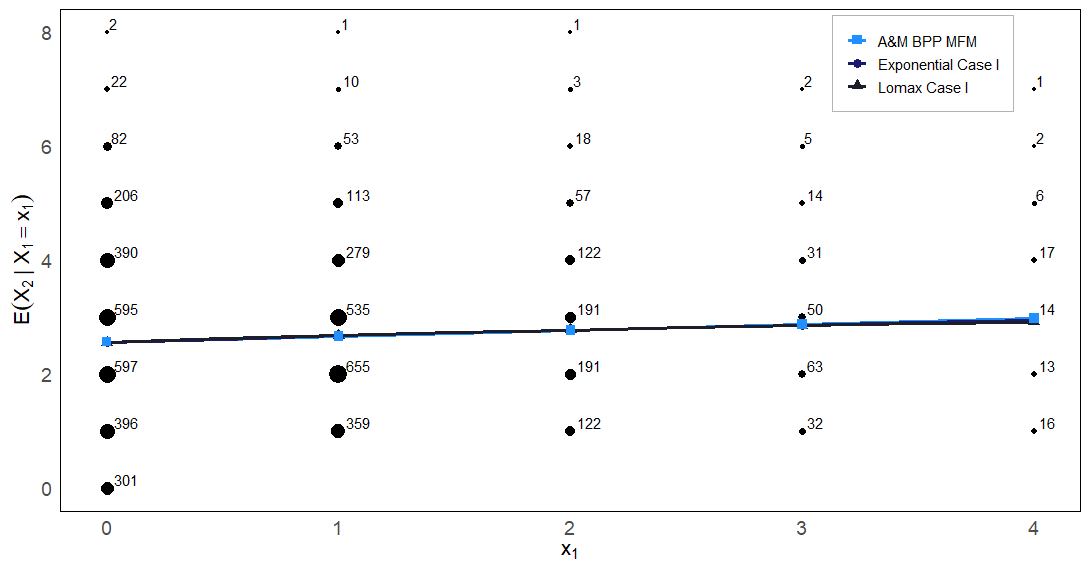}
    \caption{$E(X_2\mid X_1= x_1)$ vs. $X_1 = x_1$ for best fitted models using their m.l.e's. Size of black dots represent relative frequency of observation.}
    \label{fig: health mirrored}
\end{figure}

\begin{figure}[H]
    \centering
\animategraphics[controls,autoplay,loop,width= 0.5\textwidth]{1}{Hists/Health/}{1}{7}
\caption{Mass functions of best fitted models including HRS Data.}
\label{fig: hists health}
\end{figure}

\section{Conclusion}
This study set out to develop models capable of representing negative correlation between \(X_1\) and \(X_2\) by introducing curvature into the conditional rate \(\lambda_2(x_1)\). On the HRS dataset, the proposed models modeled the boundary observation \((x_1,x_2)=(0,0)\) more adequately than \citet{arnold2021statistical}'s linear models, while retaining the capacity to approximate linear behavior where appropriate. Additionally, the accident and fatality data indicated why added curvature is deemed necessary and may lead to improved model fit.\\

Caveats arose from the additional complexity of this curvature - including imposed theoretical bounds for the study's models' correlations, and data specifications for m.m.e existence. The application results further revealed that parameter estimation frequently led to near-equivalence across models - as evidenced in their respective regression plots.\\

Future work should assess the performance of the study's models on negatively correlated bivariate count data and incorporate formal goodness-of-fit testing as extensively covered by \citet{veeranna2023goodness}.

\appendix

\section*{Appendix}\label{app: thms}
\begin{theorem} \label{thm: f(v) strict inc}
$f(\tilde{\nu}) = \frac{\tilde{\alpha}^2(\tilde{\nu} -1)^2}{\tilde{\mu}^{(\tilde{\nu} - 1)^2} -1 }$ is strictly increasing for $\tilde{\nu} \in (0, 1)$.
\begin{proof}
    Note the derivative of $f(\tilde{\nu})$ with respect to $\tilde{\nu}$:
    \begin{align}
        \frac{\partial}{\partial\nu}\left(\frac{\tilde{\alpha}^2(\tilde{\nu} -1)^2}{{e^{\tilde{\alpha}(\tilde{\nu}- 1)^2}} -1 } \right) & =\frac{2\tilde{\alpha}^2(\tilde{\nu} -1) \left( e^{\tilde{\alpha}(\tilde{\nu}-1)^2}\left(1 -\tilde{\alpha}(\tilde{\nu} -1)^2\right)-1 \right)}{\left( e^{\tilde{\alpha}(\tilde{\nu}-1)^2}- 1\right)^2}. \nonumber
    \end{align}
    Since $0<\tilde{\nu}<1 \implies -2\alpha^2<2\alpha^2(\tilde{\nu} -1) < 0$. It remains to be shown that $f(t) = e^t(1 -t) - 1 <0 $ for $t = \tilde{\alpha}(\tilde{\nu} - 1)^2$ noting $0<t<\alpha$, to ultimately show $\frac{\partial f(\tilde{\nu}) }{\partial \tilde{\nu}}> 0$. Now since $\frac{d}{dt} \left(e^t(1-t) -1\right) = -te^t$, $f(t)$ is monotonic for $t>0$ and since $\lim_{t \to 0^+}f(t) = 0$, we have shown $f(t) < 0$ for $t>0$. It follows that since $\frac{\partial f(\tilde{\nu}) }{\partial \tilde{\nu}} >0$ for $0<\tilde{\nu}<1$, $f(\tilde{\nu})$ is strictly increasing on this interval.
\end{proof}
\end{theorem}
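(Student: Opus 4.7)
The plan is to reduce the problem to a single-variable monotonicity statement by substituting $t = \tilde{\alpha}(\tilde{\nu}-1)^2$. Since $\tilde{\mu} = e^{\tilde{\alpha}}$, we have $\tilde{\mu}^{(\tilde{\nu}-1)^2} = e^{\tilde{\alpha}(\tilde{\nu}-1)^2} = e^t$, so
\begin{equation}
f(\tilde{\nu}) \;=\; \frac{\tilde{\alpha}^2(\tilde{\nu}-1)^2}{e^{\tilde{\alpha}(\tilde{\nu}-1)^2}-1} \;=\; \tilde{\alpha}\,\frac{t}{e^t-1}, \nonumber
\end{equation}
where for $\tilde{\nu}\in(0,1)$ the new variable satisfies $t\in(0,\tilde{\alpha})$, and the map $\tilde{\nu}\mapsto t$ is strictly decreasing (since $(\tilde{\nu}-1)^2$ is strictly decreasing on $(0,1)$). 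Hence it suffices to show that $\varphi(t) := t/(e^t-1)$ is strictly decreasing for $t>0$; composing the two monotonicities then yields $f$ strictly increasing on $(0,1)$.

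For the core lemma, I would compute
\begin{equation}
\varphi'(t) \;=\; \frac{(e^t-1) - t e^t}{(e^t-1)^2} \;=\; \frac{g(t)}{(e^t-1)^2}, \qquad g(t) := e^t(1-t) - 1, \nonumber
\end{equation}
and show $g(t) < 0$ for $t>0$. This follows from $g(0)=0$ together with $g'(t) = -t e^t < 0$ for $t>0$, so $g$ is strictly decreasing on $(0,\infty)$ and therefore negative there. Consequently $\varphi'(t) < 0$ for $t>0$, i.e.\ $\varphi$ is strictly decreasing.

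Finally, combining the two observations: as $\tilde{\nu}$ increases on $(0,1)$, the quantity $t = \tilde{\alpha}(\tilde{\nu}-1)^2$ strictly decreases, and $\varphi(t)$ strictly decreases in $t$, so $\varphi(t)$ strictly increases in $\tilde{\nu}$; multiplication by the positive constant $\tilde{\alpha}$ preserves this, giving the claim. The only non-routine step is the sign analysis of $g(t)=e^t(1-t)-1$, which is handled cleanly by the derivative argument above; this is essentially the same obstacle encountered in the author's direct computation, but the change of variable makes the $\tilde{\alpha}$- and $\tilde{\nu}$-dependence disappear and isolates the universal inequality $e^t(1-t)<1$ for $t>0$.
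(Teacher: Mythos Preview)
Your proof is correct and essentially the same as the paper's: both reduce the question to the inequality $g(t)=e^t(1-t)-1<0$ for $t>0$, proved via $g(0)=0$ and $g'(t)=-te^t<0$. The only difference is cosmetic --- you substitute $t=\tilde{\alpha}(\tilde{\nu}-1)^2$ \emph{before} differentiating and then compose two monotonicities, whereas the paper differentiates $f(\tilde{\nu})$ directly (so the chain-rule factor $2\tilde{\alpha}^2(\tilde{\nu}-1)<0$ appears explicitly) and then performs the identical sign analysis on $g(t)$.
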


\begin{theorem} \label{thm: f(gamma') strict inc}
  $f(\tilde{\gamma}') = \frac{(\tilde{\alpha}')^2\left[{}_{1}F_{1}(\tilde{\gamma}') - \left(\frac{\tilde{\gamma}'}{\tilde{\gamma}'+1}\right){}_{1}F_{1}(\tilde{\gamma}'+1)  \right]^2}{\tilde{\mu}' {}_{2}F_{2}(\tilde{\gamma}') -  [{}_{1}F_{1}(\tilde{\gamma}')]^2 } $ is strictly increasing for $\tilde{\gamma}' > 0$.
  \begin{proof}
     We aim to show $ \frac{\partial f(\tilde{\gamma}')}{\partial\tilde{\gamma}'}> 0$ for $\tilde{\gamma}' > 0$. Now, through substitution of Equations \ref{eq: moment equiv 1 hyper}, \ref{eq: moment equiv 2 hyper} and \ref{eq: moment equiv 2 hyper} into $f(\tilde{\gamma}')$, we have $f(\tilde{\gamma'}) = f(\tilde{\nu})$ (where $f(\tilde{\nu})$ is referenced from Theorem \ref{thm: f(v) strict inc}). Subsequently, Theorem \ref{thm: f(v) strict inc} proved $\frac{\partial f(\tilde{\nu})}{\partial{\tilde{\nu}}} > 0$ for $0<\tilde{\nu}<1$ (or equivalently, for $\tilde{\gamma}>0$). Now since:
     \begin{align}
         \frac{\partial f(\tilde{\gamma}')}{\partial\tilde{\gamma}'} & = \frac{\partial f(\tilde{\gamma}')}{\partial\tilde{\nu}} \cdot \frac{\partial \tilde{\nu} }{\partial \tilde{\gamma}'}\nonumber \\
         & = \frac{\partial f(\tilde{\nu})}{\partial\tilde{\nu}} \cdot \frac{\partial \tilde{\nu} }{\partial \tilde{\gamma}'},\nonumber 
     \end{align}
     it remains to be shown that $\frac{\partial\tilde{\nu}}{\partial\tilde{\gamma}'} > 0$. Now from Equation \ref{eq: moment equiv 1 hyper} and noting we already established $\tilde{\alpha} = \tilde{\alpha}'$, we have:
     \begin{align}
         \tilde{\nu} &= \frac{1}{\tilde{\alpha}}\log \left({}_{1}F_{1}(\tilde{\gamma}', \tilde{\gamma}'+1;\tilde{\alpha'}) \right) \nonumber \\
         & = \frac{1}{\tilde{\alpha}} \log\left(\sum_{i = 0}^\infty\frac{\tilde{\gamma}'}{\tilde{\gamma}'+i} \frac{\tilde{\alpha}}{i!} \right). \nonumber
     \end{align}
     Now, taking the derivative of $\tilde{\nu}$ with respect to $\tilde{\gamma}'$, we have:
     \begin{align}
         \frac{\partial\tilde{\nu}}{\partial \tilde{\gamma}'} & = \frac{1}{\tilde{\alpha}} \frac{1}{\sum_{i = 0}^\infty\frac{\tilde{\gamma}'}{\tilde{\gamma}'+i} \frac{\tilde{\alpha}}{i!}}\sum_{i = 0}^\infty\frac{i}{\left(\tilde{\gamma}'+i\right)^2} \frac{\tilde{\alpha}}{i!} \nonumber \\
         & > 0. \nonumber
     \end{align}
  \end{proof}
\end{theorem}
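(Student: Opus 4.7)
The plan is to reduce this to the already-proved Theorem \ref{thm: f(v) strict inc} by a change of variables, exploiting the link between the exponential and Lomax parameterizations. Since $\tilde{\alpha} = \tilde{\alpha}'$ is already established and both $f(\tilde{\gamma}')$ and the function $f(\tilde{\nu})$ from Theorem \ref{thm: f(v) strict inc} equal the same sample quantity $\frac{S_{12}^2}{S_2-M_2}$ (compare Equations \ref{eq: nu exp} and \ref{eq: gamma lomax}), the moment-equivalence identities (Equations \ref{eq: moment equiv 1 hyper}--\ref{eq: moment equiv 3 hyper}) induce a change of variables $\tilde{\nu} = \tilde{\nu}(\tilde{\gamma}')$ under which $f(\tilde{\gamma}')$ coincides with $f(\tilde{\nu}(\tilde{\gamma}'))$ in the functional form of Theorem \ref{thm: f(v) strict inc}. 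Substituting the three moment equivalences into the defining ratio for $f(\tilde{\gamma}')$ should exhibit it as a composition $f \circ \tilde{\nu}$, so the chain rule gives
\begin{align*}
\frac{\partial f(\tilde{\gamma}')}{\partial \tilde{\gamma}'} = \frac{\partial f(\tilde{\nu})}{\partial \tilde{\nu}} \cdot \frac{\partial \tilde{\nu}}{\partial \tilde{\gamma}'},
\end{align*}
and Theorem \ref{thm: f(v) strict inc} already supplies the first factor as strictly positive on $(0, 1)$.

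It then remains to show $\frac{\partial \tilde{\nu}}{\partial \tilde{\gamma}'} > 0$ for $\tilde{\gamma}' > 0$. Using Equation \ref{eq: moment equiv 1 hyper} together with $\tilde{\alpha} = \tilde{\alpha}'$, I would extract $\tilde{\nu}$ explicitly as $\tilde{\nu} = \frac{1}{\tilde{\alpha}}\log\left({}_1F_1(\tilde{\gamma}')\right)$ and rewrite the confluent hypergeometric series as ${}_1F_1(\tilde{\gamma}') = \sum_{i=0}^{\infty} \frac{\tilde{\gamma}'}{\tilde{\gamma}'+i}\frac{\tilde{\alpha}^{i}}{i!}$. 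Termwise differentiation with respect to $\tilde{\gamma}'$ produces
\begin{align*}
\frac{\partial \tilde{\nu}}{\partial \tilde{\gamma}'} = \frac{1}{\tilde{\alpha}\, {}_1F_1(\tilde{\gamma}')} \sum_{i=0}^{\infty} \frac{i}{(\tilde{\gamma}'+i)^{2}}\frac{\tilde{\alpha}^{i}}{i!},
\end{align*}
which is strictly positive: the $i=0$ term vanishes, every term with $i \geq 1$ is positive, and both $\tilde{\alpha}$ and ${}_1F_1(\tilde{\gamma}')$ are positive. Combining with the previous factor yields $\partial f / \partial \tilde{\gamma}' > 0$.

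The main obstacle is the substitution step: carefully verifying that the three moment-equivalence identities, when substituted into $f(\tilde{\gamma}')$, produce exactly the functional form $f(\tilde{\nu})$ of Theorem \ref{thm: f(v) strict inc}. The algebra requires the $\tilde{\beta}$ and $\tilde{\delta}$ parameters to cancel cleanly between the squared covariance in the numerator and the variance-minus-mean expression in the denominator, which, while tedious, is direct. Once this identification is secured, the chain-rule decomposition and the termwise series analysis that follow are entirely routine.
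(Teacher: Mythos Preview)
Your proposal is correct and follows essentially the same approach as the paper: both reduce to Theorem~\ref{thm: f(v) strict inc} via the moment-equivalence change of variables $\tilde{\nu}=\tilde{\nu}(\tilde{\gamma}')$, apply the chain rule, then extract $\tilde{\nu}=\frac{1}{\tilde{\alpha}}\log\bigl({}_1F_1(\tilde{\gamma}')\bigr)$ from Equation~\ref{eq: moment equiv 1 hyper} and differentiate the series termwise to obtain $\frac{\partial\tilde{\nu}}{\partial\tilde{\gamma}'}>0$. Your remark that the $\tilde{\beta}$ and $\tilde{\delta}$ factors must cancel in the substitution step is a point the paper leaves implicit, but otherwise the arguments coincide.
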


\begin{lemma} \label{lemma: limit f(gamma)}
   $\lim_{\zeta \to 0}{}_{\psi}F_{\psi}\left(\zeta, \ldots, \zeta, (\zeta +1), \ldots, (\zeta+1);\xi \right) = 1$ and $\lim_{\zeta \to 0}{}_{\psi}F_{\psi}\left((\zeta+1), \ldots, (\zeta+1), (\zeta +2), \ldots, (\zeta+2);\xi \right) = \frac{1}{\xi}\left(e^\xi - 1 \right){}$ for $\zeta, \xi >0$ and $\psi \in \mathbb{N}$.
\end{lemma}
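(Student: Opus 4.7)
The plan is to expand each hypergeometric function as a power series using the Pochhammer identity $\frac{(\zeta)_n}{(\zeta+1)_n}=\frac{\zeta}{\zeta+n}$ (and its shifted analogue) already established in the paper, and then to interchange the limit with the infinite sum by invoking Tannery's theorem (the discrete dominated convergence theorem).

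For the first limit, I would write
\begin{align}
{}_{\psi}F_{\psi}\bigl(\zeta,\ldots,\zeta,(\zeta+1),\ldots,(\zeta+1);\xi\bigr)
= \sum_{n=0}^{\infty} \left(\frac{\zeta}{\zeta+n}\right)^{\psi}\frac{\xi^{n}}{n!}, \nonumber
\end{align}
separating the $n=0$ term (which equals $1$ identically in $\zeta$) from the tail. For $n\geq 1$ each summand satisfies the uniform bound $\left(\tfrac{\zeta}{\zeta+n}\right)^{\psi}\tfrac{\xi^{n}}{n!}\leq \tfrac{\xi^{n}}{n!}$, and the majorant $\sum \xi^{n}/n!=e^{\xi}$ is summable, so Tannery's theorem permits passing the limit inside the sum. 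Since $\left(\tfrac{\zeta}{\zeta+n}\right)^{\psi}\to 0$ for every $n\geq 1$ as $\zeta\to 0^{+}$ (using $\psi\in\mathbb{N}$), the tail vanishes and the limit equals $1$.

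For the second limit, the same Pochhammer simplification applied to the shifted parameters gives
\begin{align}
{}_{\psi}F_{\psi}\bigl((\zeta+1),\ldots,(\zeta+1),(\zeta+2),\ldots,(\zeta+2);\xi\bigr)
= \sum_{n=0}^{\infty}\left(\frac{\zeta+1}{\zeta+1+n}\right)^{\psi}\frac{\xi^{n}}{n!}. \nonumber
\end{align}
The summands are again dominated by $\xi^{n}/n!$ uniformly in $\zeta\geq 0$, so Tannery's theorem again permits passing the limit inside. The pointwise limit of each summand is $\left(\tfrac{1}{1+n}\right)^{\psi}\tfrac{\xi^{n}}{n!}$. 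In the $\psi=1$ case relevant to Equation~\ref{eq: moment equiv 1 hyper} through Equation~\ref{eq: moment equiv 3 hyper}, the reindexing $m=n+1$ yields
\begin{align}
\sum_{n=0}^{\infty}\frac{1}{1+n}\frac{\xi^{n}}{n!}
= \frac{1}{\xi}\sum_{m=1}^{\infty}\frac{\xi^{m}}{m!}
= \frac{1}{\xi}\bigl(e^{\xi}-1\bigr), \nonumber
\end{align}
completing the proof.

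The main obstacle is purely expositional: one must justify the interchange of limit and infinite sum carefully, since both hypergeometric series involve $\zeta$ inside every term. The trivial uniform bound by $1$ on the ratios $\tfrac{\zeta}{\zeta+n}$ and $\tfrac{\zeta+1}{\zeta+1+n}$ disposes of this issue cleanly via Tannery's theorem. The remaining algebraic step, the reindexing that collapses the resulting series to $\tfrac{1}{\xi}(e^{\xi}-1)$, is elementary and hinges on $\psi=1$, which is the value actually invoked when the lemma is applied to Equation~\ref{eq: gamma lomax} in Theorem~\ref{thm: limit gamma lomax}.
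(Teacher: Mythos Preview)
Your approach coincides with the paper's: expand each ${}_{\psi}F_{\psi}$ via the Pochhammer identity into $\sum_{n}(\,\cdot\,)^{\psi}\xi^{n}/n!$, pass the limit through the sum termwise, and for the second series reindex $m=n+1$ to recover $\tfrac{1}{\xi}(e^{\xi}-1)$. The one genuine addition is your explicit appeal to Tannery's theorem to justify the interchange of limit and sum; the paper simply performs the interchange without comment.

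Your restriction of the second limit to $\psi=1$ is not a gap but a correction. The lemma as stated claims the second limit equals $\tfrac{1}{\xi}(e^{\xi}-1)$ for every $\psi\in\mathbb{N}$, yet the termwise limit is $\sum_{n\geq 0}(1+n)^{-\psi}\xi^{n}/n!$, which collapses to $\tfrac{1}{\xi}(e^{\xi}-1)$ only when $\psi=1$. The paper's own proof silently drops the exponent $\psi$ between the displayed line $1+(\tfrac{1}{2})^{\psi}\xi+(\tfrac{1}{3})^{\psi}\tfrac{\xi^{2}}{2}+\cdots$ and the next line $\sum_{i}\tfrac{1}{1+i}\tfrac{\xi^{i}}{i!}$. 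Since Theorem~\ref{thm: limit gamma lomax} only invokes the second limit with $\psi=1$ (for ${}_{1}F_{1}(\tilde{\gamma}'+1)$), your restricted argument is exactly what the application requires; the first limit, by contrast, genuinely holds for all $\psi\in\mathbb{N}$ and your argument covers that case in full.
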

\begin{proof}
    We note:
    \begin{align}
        \lim_{\zeta \to 0} \left\{ _{\psi}F_{\psi}\left(\zeta, \ldots, \zeta, (\zeta +1), \ldots, (\zeta+1);\xi \right) \right\} &= \lim_{\zeta \to 0} \left\{ \sum_{i = 0}^\infty \left[\frac{\zeta}{\zeta +i}\right]^\psi \frac{\xi^i}{i!} \right\}  \nonumber\\
        &= \lim_{\zeta \to 0} \left\{1+ \left(\frac{\zeta}{\zeta+1} \right)^\psi \xi + \left(\frac{\zeta}{\zeta+2} \right)^\psi \frac{\xi^2}{2}+ \left(\frac{\zeta}{\zeta+3} \right)^\psi \frac{\xi^3}{6} +\ldots \right \}\nonumber \\
        & = 1, \nonumber
    \end{align}
and:
\begin{align}
      \lim_{\zeta \to 0} \left\{  {}_{\psi}F_{\psi}\left( (\zeta +1), \ldots, (\zeta +1), (\zeta +2), \ldots, (\zeta+2);\xi \right) \right\}
      &=  \lim_{\zeta \to 0} \left\{ \sum_{i = 0}^\infty \left[\frac{\zeta+1}{\zeta +1+i}\right]^\psi \frac{\xi^i}{i!} \right\}\nonumber\\
    &=  \lim_{\zeta \to 0} \left\{ 1+ \left(\frac{\zeta+1}{\zeta+2} \right)^\psi \xi + \left(\frac{\zeta+1}{\zeta+3} \right)^\psi \frac{\xi^2}{2}+ \left(\frac{\zeta+1}{\zeta+4} \right)^\psi \frac{\xi^3}{6} +\ldots \right\}\nonumber\\
& = \lim_{\zeta \to 0} \left\{ 1+ \left(\frac{1}{2} \right)^\psi \xi + \left(\frac{1}{3} \right)^\psi \frac{\xi^2}{2}+ \left(\frac{1}{4} \right)^\psi \frac{\xi^3}{6} +\ldots \right\}\nonumber\\
& = \lim_{\zeta \to 0} \left\{ \sum_{i = 0}^\infty \frac{1}{1+i} \frac{\xi^i}{i!} \right\}\nonumber \\
&= \lim_{\zeta \to 0} \left\{\frac{1}{\xi} \sum_{i = 0}^\infty \frac{\xi^{i+1}}{(i+1)!} \right\}\nonumber \\
& = \lim_{\zeta \to 0} \left\{\frac{1}{\xi} \left(\sum_{i = 0}^\infty \frac{\xi^{i}}{i!} - 1\right) \right\} \nonumber \\ 
& =\frac{1}{\xi} \left(e^\xi  - 1\right). \nonumber
\end{align}
  
\end{proof}
\begin{theorem}\label{thm: limit gamma lomax}
    $\lim_{\tilde{\gamma}' \to 0^+} \left\{ \frac{(\tilde{\alpha}')^2\left[{}_{1}F_{1}(\tilde{\gamma}') - \left(\frac{\tilde{\gamma}'}{\tilde{\gamma}'+1}\right){}_{1}F_{1}(\tilde{\gamma}'+1)  \right]^2}{\tilde{\mu}' {}_{2}F_{2}(\tilde{\gamma}') -  [{}_{1}F_{1}(\tilde{\gamma}')]^2 } \right\} = \frac{\left(\tilde{\alpha}'\right)^2}{\tilde{\mu}' - 1}$ for $\tilde{\alpha}' > 0$.
\end{theorem}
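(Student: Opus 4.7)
The plan is to compute the limits of the numerator and denominator separately and then invoke the quotient rule for limits, after verifying that the denominator limit is strictly positive. The entire argument is a direct application of Lemma \ref{lemma: limit f(gamma)}, so no new machinery is required.

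First I would handle the numerator. Applying Lemma \ref{lemma: limit f(gamma)} with $\psi = 1$ and $\xi = \tilde{\alpha}'$ gives $\lim_{\tilde{\gamma}' \to 0^+}{}_{1}F_{1}(\tilde{\gamma}') = 1$ and $\lim_{\tilde{\gamma}' \to 0^+}{}_{1}F_{1}(\tilde{\gamma}'+1) = (\tilde{\mu}' - 1)/\tilde{\alpha}'$. Since $\tilde{\gamma}'/(\tilde{\gamma}'+1) \to 0$ while its companion factor $ {}_{1}F_{1}(\tilde{\gamma}'+1)$ tends to a finite bound, the bracketed expression $\bigl[{}_{1}F_{1}(\tilde{\gamma}') - \tfrac{\tilde{\gamma}'}{\tilde{\gamma}'+1}{}_{1}F_{1}(\tilde{\gamma}'+1)\bigr]$ converges to $1$. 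Squaring and scaling by $(\tilde{\alpha}')^2$ yields a numerator limit of $(\tilde{\alpha}')^2$.

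Next, for the denominator, I apply Lemma \ref{lemma: limit f(gamma)} with $\psi = 2$ to obtain $\lim_{\tilde{\gamma}' \to 0^+}{}_{2}F_{2}(\tilde{\gamma}') = 1$; combined with ${}_{1}F_{1}(\tilde{\gamma}') \to 1$, this delivers a denominator limit of $\tilde{\mu}' \cdot 1 - 1^2 = \tilde{\mu}' - 1$.

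Finally, since $\tilde{\alpha}' > 0$ implies $\tilde{\mu}' = e^{\tilde{\alpha}'} > 1$, the denominator limit is strictly positive, so the quotient rule applies and returns the claimed value $(\tilde{\alpha}')^2/(\tilde{\mu}' - 1)$. There is no genuine obstacle in this proof; the only subtlety worth flagging is ensuring that the prefactor $\tilde{\gamma}'/(\tilde{\gamma}'+1)$ indeed kills the second term in the bracket (this requires boundedness of ${}_{1}F_{1}(\tilde{\gamma}'+1)$ near $\tilde{\gamma}' = 0$, which Lemma \ref{lemma: limit f(gamma)} supplies), and confirming $\tilde{\mu}' > 1$ so that the ratio of limits is legitimate.
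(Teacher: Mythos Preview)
Your proposal is correct and follows essentially the same approach as the paper: both split the expression via the algebra of limits and invoke Lemma~\ref{lemma: limit f(gamma)} for the individual hypergeometric terms. Your version is in fact slightly more careful, since you explicitly verify that $\tilde{\mu}' - 1 > 0$ before applying the quotient rule, a point the paper leaves implicit.
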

\begin{proof}
    We note:
    \begin{align}
   &\lim_{\tilde{\gamma}' \to 0^+} \left\{ \frac{(\tilde{\alpha}')^2\left[{}_{1}F_{1}(\tilde{\gamma}') - \left(\frac{\tilde{\gamma}'}{\tilde{\gamma}'+1}\right){}_{1}F_{1}(\tilde{\gamma}'+1)  \right]^2}{\tilde{\mu}' {}_{2}F_{2}(\tilde{\gamma}') -  [{}_{1}F_{1}(\tilde{\gamma}')]^2 } \right\} \nonumber\\
     &=\frac{(\tilde{\alpha}')^2\left[\lim_{\tilde{\gamma}' \to 0^+} \left\{{}_{1}F_{1}(\tilde{\gamma}')\right\} -\left(\lim_{\tilde{\gamma}' \to 0^+}\left\{\frac{\tilde{\gamma}'}{\tilde{\gamma}'+1}\right\}\right) \left(\lim_{\tilde{\gamma}'\to 0^+} \left\{ {}_{1}F_{1}(\tilde{\gamma}'+1) \right\} \right) \right]^2}{\tilde{\mu}' \lim_{\tilde{\gamma}' \to 0^+} \left\{{}_{2}F_{2}(\tilde{\gamma}')\right\} -  [\lim_{\tilde{\gamma}'\to 0^+} \left\{{}_{1}F_{1}(\tilde{\gamma}')\right\}]^2 }, \nonumber
    \end{align}
and by using Lemma \ref{lemma: limit f(gamma)} through direct substitution, we obtain $\frac{\left(\tilde{\alpha}'\right)^2}{\tilde{\mu}' - 1}$. 
\end{proof}

\bibliographystyle{unsrtnat}
\bibliography{ref}

\end{document}